\newcommand{\into}{{\hookrightarrow}}
\newcommand{\R}{{\mathbb{R}}}
\newcommand{\C}{{\mathbb{C}}}
\newcommand{\spin}{{\operatorname{spin}}}
\newcommand{\Cl}{{\C l}}
\newcommand{\E}{{\mathcal{E}}}
\newcommand{\F}{{\mathcal{F}}}
\newcommand{\K}{{\mathcal{K}}}
\newcommand{\id}{{\operatorname{id}}}
\newcommand{\bdd}{{\mathfrak{b}}}
\newcommand{\1}{{\mathds{1}}}
\renewcommand{\d}{{\operatorname{d}}}
\newcommand{\N}{{\mathbb{N}}}
\newcommand{\Z}{{\mathbb{Z}}}
\newcommand{\Ann}{{\mathbb{A}}}
\newcommand{\algotimes}{{\otimes_{\text{alg}}}}
\newcommand{\W}{{\mathcal{W}}}
\def\cS{\mathcal S}
\def\S{\mathbb S}
\def\tilde{\widetilde}
\newcommand{\matTwo}[4]{
	{
	\begin{pmatrix}
		#1 & #2 \\
		#3 & #4
	\end{pmatrix}
	}
}
\newcommand{\vecTwo}[2]{
	{
	\begin{pmatrix}
		#1 \\
		#2
	\end{pmatrix}
	}
}
\newcommand{\smallMatTwo}[4]{
	{ \left(
	\begin{smallmatrix}
		#1 & #2 \\
		#3 & #4
	\end{smallmatrix}
	\right) }
}
\DeclareMathOperator{\dom}{dom}
\DeclareMathOperator{\ran}{ran}
\DeclareMathOperator{\Index}{Index}
\newtheorem{theorem}{Theorem}
\newtheorem{proposition}{Proposition}[section]
\newtheorem{corollary}[proposition]{Corollary}
\newtheorem{lemma}[proposition]{Lemma}
\newtheorem{definition}[proposition]{Definition}
\newtheorem{remark}[proposition]{Remark}
\author{Walter D. van Suijlekom}
\address{Institute for Mathematics, Astrophysics and Particle Physics, Radboud University Nijmegen, Heyendaalseweg 135, 6525 AJ Nijmegen, The Netherlands}
  \email{waltervs@math.ru.nl}
  \address{Department of Mathematics, University of Western Ontario, Middlesex College, N6A 5B7 London ON, Canada}
\email{lverhoe@uwo.ca}
\author{Luuk S. Verhoeven}
\title{Immersions and the unbounded Kasparov product: embedding spheres into Euclidean space}
\begin{document}

%% \begin{center}
%% 	\section*{Abstract}

%% 	\begin{minipage}{0.7\textwidth}
\begin{abstract}
  We construct an unbounded representative for the shriek class associated to the embeddings of spheres into Euclidean space. % $\imath:\S^n \into \R^{n+1}$% as defined in \cite{ConnesSkandalis} and.
  We equip this unbounded Kasparov cycle with a connection and 
  %		Using this connection we
  compute the unbounded Kasparov product with the Dirac operator on $\R^{n+1}$.
  %  $product $\imath_! \otimes [\R^{n+1}]$, which is a
  We find that the resulting spectral triple for the algebra $C(\S^n)$ differs from the Dirac operator on the round sphere by a so-called index cycle, %can be compared to the usual Dirac operator on the round sphere and we find that the difference can be identified with an index cycle
  whose class %in the expression for the unbounded product we identify an index class
  in $KK_0(\C, \C)$ represents the multiplicative unit. At all points we check that our construction involving the unbounded Kasparov product is compatible with the bounded Kasparov product using Kucerovsky's criterion and we thus capture the composition law for the shriek map for these immersions at the unbounded KK-theoretical level.
  %We check that the unbounded Kasparov product represents the product of the canonical 
%		We then show that the product $\imath_! \otimes [\R^{n+1}]$ represents the product of the canonical spectral triple of $ \S^n$ with this index class using Kucerovsky's criterion \cite{Kucerovsky}.
\end{abstract}%% 	\end{minipage}
%% \end{center}

\maketitle

\section{Introduction}
\label{sec:intro}
	
	In their 1984 paper on the longitudinal index theorem for foliations \cite{ConnesSkandalis}, Connes and Skandalis prove the wrong-way functoriality of the shriek map.
	The shriek, or wrong-way, map is a class $f_! \in KK(C(X), C(Y))$ associated to a $K$-oriented map $f:X \to Y$ \cite{C82}.
	Indeed, if $f:X \to Y$ and $g:Y \to Z$ we have
	\begin{equation*}
		(g \circ f)_! = f_! \otimes_{C(Y)} g_!
	\end{equation*}
	where $\otimes_{C(Y)}$ denotes the internal Kasparov product over $C(Y)$.
	
	An interesting special case of the shriek map is the fundamental class $[X] \in KK(C(X), \C)$ of a manifold, which is the shriek of the point map $\text{pt}_X:X \to \{*\}$.
	Hence, whenever we have a $K$-oriented map $f:X \to Y$ we get a $KK$-theoretic factorization of fundamental classes
	\begin{equation*}
		[X]= f_! \otimes_{C(Y)} [Y].
	\end{equation*}
	
	This is relevant to noncommutative geometry, since the canonical spectral triple of a manifold \cite{Connes} is an unbounded representative for the fundamental class.
	The construction of $f_!$ given in \cite{ConnesSkandalis} already has a strong unbounded character, so it seems natural to investigate how this factorization of spectral triples can be realized concretely in terms of unbounded KK-cycles in the sense of \cite{BaajJulg}.
	
	When $\pi:M \to B$ is a submersion of compact manifolds, this factorization has already been investigated in \cite{KaadSuijlekom}.
	There a vertical family of Dirac operators $D_\pi$ was constructed, such that the Dirac operator $D_M$ on $M$ decomposes as the following tensor sum
%%         of the vertical family of Dirac operators $D_\pi$ and the Dirac operator $D_B$ on the base $B$ lifted to $M$ using a connection , up to a bounded curvature term $\kappa$:
%% %	
        %This factorization takes the form
	\begin{equation}
		D_M = D_\pi \otimes 1 + 1 \otimes_\nabla D_B + \kappa,
		\label{eq:submersion_factorization}
	\end{equation}
        in terms of the Dirac operator $D_B$ on the base $B$ lifted to $M$ using a connection $\nabla$, and a bounded operator $\kappa$ which is related to the curvature of $\pi$.

	When the map in question is an immersion $\imath: M \to N$ a similar factorization of Dirac operators should be available. Namely, it should be possible to write the Dirac operator $D_M$ as an unbounded Kasparov product of a shriek element corresponding to $\imath$ and the Dirac operator $D_N$. However, for this to work it is crucial to somehow be able to 
	%However in this case we need to essentially
        remove the vertical, or normal, part of the Dirac operator from $D_N$. % rather then add it.
Inspired by the bounded construction here the key ingredient is a Dirac-dual Dirac approach as in \cite{KasparovDual}, see also \cite{Echterhoff}.
	
	In this article we will investigate whether, and how, this factorization works for a simple and concrete set of immersions given by the embeddings $\imath: \S^{n} \into \R^{n+1}$ for $n \geq 1$.
	We start by introducing and constructing the primary ingredients: the unbounded representatives of $ \S^{n}$ and $\R^{n+1}$, and the unbounded shriek cycle of $\imath$ which we also relate to the bounded shriek class $\imath_!$ constructed in \cite{ConnesSkandalis}.
	Next, we investigate the interpretation of the shriek cycle as a dual Dirac, which yields a fourth unbounded $KK$-cycle which we will call the index cycle.
	Its bounded transform --- the so-called index class--- turns out to represent the multiplicative unit in $KK$-theory.
	
	Once we have all ingredients we use a connection on the unbounded shriek cycle to construct a candidate unbounded Kasparov cycle for the product, very much in the spirit of \cite{KaadLeschUnbddProd} and \cite{Mesland}.
	We then use the criterion in \cite{Kucerovsky} to prove that this candidate indeed represents the Kasparov product of $\imath_!$ and $[\R^{n+1}]$ in KK-theory, and that it also represents the product of $[ \S^{n}]$ and the index class, and hence $[ \S^{n}]$ itself.
	This gives the desired factorization of the given immersion $\imath:\S^n \to R^{n+1}$ in terms of the unbounded Kasparov product. %provides a similar factorization as in Equation \ref{eq:submersion_factorization} for $D_{ \S^{n}}$.

        \subsection*{Acknowledgements}
          We would like to thank Francesca Arici, Alain Connes, Jens Kaad, Bram Mesland, George Skandalis and Abel Stern for useful discussions and remarks. This research was partially supported by NWO under VIDI-Grant 016.133.326.

\section{The geometry of the spheres in Euclidean space}
\label{sec:geometry}
		From the construction of the shriek class in \cite{ConnesSkandalis} it is clear that the canonical spectral triple of a manifold $M$ represents the fundamental class $[M]$ of that manifold in $KK(C_0(M), \C)$.
		Our first goal is writing the Dirac operator for the embedded spin$^c$ submanifold $ \S^n \subseteq \R^{n+1}$, $n \geq 1$, which of course coincides with the Dirac operator on the round sphere $\S^n$. Then we turn to the unbounded shriek cycle and show that its bounded transform is homotopic to the shriek class in $KK(C(\S^n),C_0(\R^{n+1})$ that was considered in \cite{ConnesSkandalis}. 

	\subsection{Spin geometry of \texorpdfstring{$ \S^{n}$}{Spheres} and \texorpdfstring{$\R^{n+1}$}{Euclidean spaces}}
	\label{sec:spin_geometry_manifolds}
        The first step in the construction of the Dirac operator on the embedded submanifold $\S^n \subseteq \R^n$ is to investigate the $\spin^c$-structure on $ \S^n$ induced by restricting the standard $\spin^c$-structure on $\R^{n+1}$. 	This construction is well known ({\em cf.} \cite{Bures} and \cite{Baer}) but we repeat it here in some detail since later on we will refer to some technical aspects of this construction.

		%% Let us now turn to the construction of the induced $\spin^c$ structure on $ \S^n$.
		Let $\imath: \S^n \into \R^{n+1}$ be the standard immersion of the $n$-dimensional sphere into $\R^{n+1}$. % as the points of norm 1.
		Choose some $0 < \varepsilon < 1$ and define a tubular neighbourhood of this immersion by $\tilde{\imath}: \S^n \times (-\varepsilon, \varepsilon) \to \R^{n+1}$ by using geodesic flow along the normal vector field $\partial_r = \frac{1}{r}(x^i \partial_{x^i})$, {\em i.e.} we have in spherical coordinates $\tilde{\imath}(\vec{\theta}, s) = (\vec{\theta}, s+1)$ (see Figure \ref{fig:image_iota}).
		
		\begin{figure}
			\centering
			\includegraphics[width=0.4\textwidth]{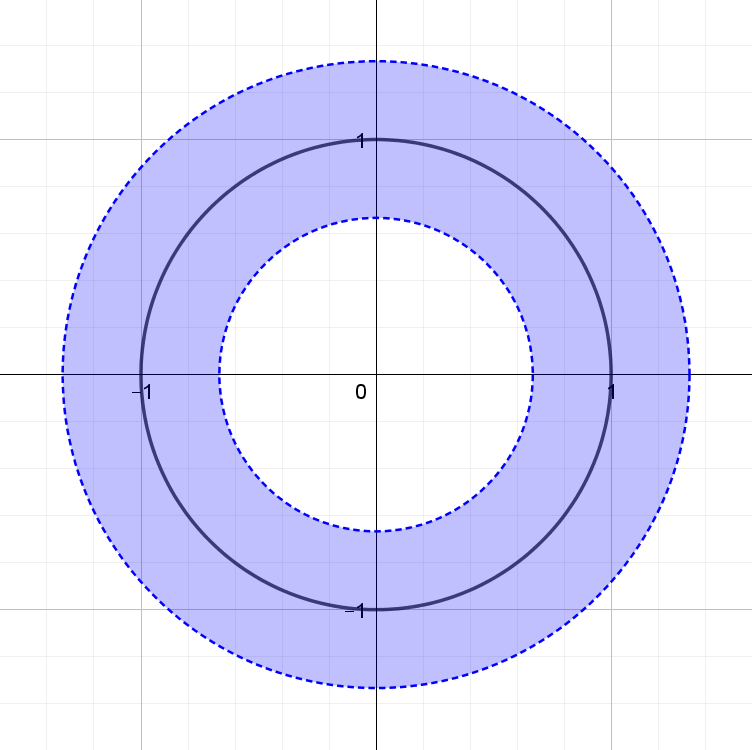}
			\caption{The tubular neighborhood around $\S^n \subseteq \R^{n+1}$ for $n=1$: the black line is the image of $\imath: \S^n \into \R^{n+1}$ and the blue band is the image of $\tilde{\imath}: \S^n \times (-\varepsilon, \varepsilon) \to \R^{n+1}$.}
			\label{fig:image_iota}
		\end{figure}

		Let $\cS$ denote the restriction of the spinor bundle on $\R^{n+1}$ to the image of $\tilde{\imath}$. %, the $\varepsilon$-shell around the $\imath( \S^n)$.
		We can define a Clifford action $\rho$ of $T \S^n$ on $\cS$ by setting $\rho(v) \psi = i c(v) c(\partial_r) \psi$ where $v \in T_x \S^n \subset T_x\R^{n+1}$, $\psi \in \cS_x$ and $c$ denotes the Clifford multiplication on $\R^{n+1}$. We will also write $\gamma_r = c(\partial_r)$.

                In order to describe the induced spinor bundle on $\S^n$ explicitly, we need to distinguish between the odd and even-dimensional case. 

\subsection*{Odd spheres}
		If $n$ is odd, say $n = 2k-1$, the restriction of $\cS$ to $ \S^{2k-1}$ does not immediately yield a spinor bundle. But since in this case $n+1$ is even, $\cS$ has a grading operator $\Gamma$ which decomposes $\cS = \ \cS^+ \oplus \ \cS^-$ into an even and odd part (which are isomorphic).
%		Note that $\ \cS^+ \cong \ \cS^-$, since $\gamma_r \Gamma = - \Gamma \gamma_r$ and $\gamma_r^2 = 1$.
		The decomposition along $\Gamma$ is preserved by $\rho$, and the restriction $\rho^+$ of $\rho$ to $\ \cS^+$ turns $\ \cS^+$ restricted to $ \S^{2k-1}$ into a spinor bundle on $ \S^{2k-1}$ (\cite{Bures,Baer}).
		
		Using this $\spin^c$ structure on $\S^{2k-1}$ we get a Dirac operator $D_{ \S^{2k-1}}^+$. 
%$  (L^2( \S^{2k-1}, \ \cS^+|_{ \S^{2k-1}}), D_{ \S^{2k-1}}^+)$. 
		In accordance to the discussion in Appendix \ref{sect:app} we want to turn this into an even cycle, and we choose to use left-doubling in this case to obtain:
%		We obtain
		\begin{equation}
		  %\widetilde{[ \S^{2k-1}]} =
                  (L^2( \S^{2k-1}, \ \cS^+|_{ \S^{2k-1}})\otimes \C^2, \widetilde{ D_{ \S^{2k-1}}}:= D_{ \S^{2k-1}}^+ \otimes \gamma^2; 1 \otimes \gamma^3)
		\end{equation}
		as an even unbounded $C( \S^{2k-1})\otimes \Cl_1$-$\C$ KK-cycle.
		\begin{remark}
Note that equivalently we could have taken $\ \cS^-$ as our defining $\spin^c$ structure, this would have yielded a different Dirac operator $D_{ \S^{2k-1}}^-$.
		Under the isomorphism $\ \cS^+ \cong \ \cS^-$ given by $\gamma_r$ we would have $D_{ \S^{2k-1}}^+ = -D_{ \S^{2k-1}}^-$.
\end{remark}

		The fact that the $\spin^c$ structure on $\S^{2k-1}$ is induced from $\R^{2k}$ allows us to relate the Dirac operators of $\R^{2k}$ and $ \S^{2k-1}$.
		Choosing frames for $\cS$ to identify $\cS\cong \ \cS^+ \otimes \C^2$, $\gamma_r \equiv 1\otimes \gamma^1$ and $\Gamma \equiv 1 \otimes \gamma^3$ we get
		\begin{equation}
			D_{\R^{2k}} = i \frac{1}{r} D^+_{ \S^{2k-1}} \otimes \gamma^2 + i \frac{2k-1}{2r} (1 \otimes \gamma^1) + i \partial_r (1 \otimes \gamma^1).
		\end{equation}
                %		This relation is found by calculations such as in
                (see also \cite{Bures} and \cite[Sect. 2]{Baer}).
%		The spectral triple representing the Euclidean space is then
%		\begin{equation*}
		  %[R^{2k}] =
%                 (L^2(\R^{2k}, \ \cS^+) \otimes \C^2, D_{\R^{2k}}; 1 \otimes \gamma^3)
                %
%\end{equation*}
This represents a class in $KK_0(C_0(\R^{2k+1}), \C)$.

                \subsection*{Even sheres}
		If $n$ is even, say $n=2k$, then $\cS|_{ \S^{2k}}$ immediately yields a spinor bundle on $ \S^{2k}$ which is graded with grading operator $\gamma_r$.
		So the representative for $[ \S^{2k}]$ becomes simply
%		\begin{equation}
                $		%	[ \S^{2k}] =
                (L^2( \S^{2k}, \cS|_{ \S^{2k}}), D_{ \S^{2k}}; \gamma_r)$.
%		\end{equation}
%		as $C( \S^{2k})$-$\C$ cycle
In this case the relation between the Dirac operator on $\R^{2k+1}$ and $ \S^{2k}$ is given by
		\begin{equation}
			D_{\R^{2k+1}} = i \frac{1}{r} \gamma_r D_{ \S^{2k}} + i \frac{2k}{2r} \gamma_r + i \gamma_r \partial_r.
		\end{equation}
		Finally, the spectral triple representing Euclidean space will be the left-doubled version of the canonical spectral triple:
		\begin{equation*}
%		  \widetilde{[\R^{2k+1}]} =
                  (L^2(\R^{2k+1}, \cS) \otimes \C^2, \widetilde{D_{\R^{2k+1}}} :=D_{\R^{2k+1}} \otimes \gamma^2; 1 \otimes \gamma^3)% \in KK_0(C_0(\R^{2k+1}) \otimes \Cl_1, \C).
		\end{equation*}
		representing a class in $KK_0(C_0(\R^{2k+1}) \otimes \Cl_1, \C)$.

	\subsection{The shriek class of the immersion}
	\label{sec:immersion_class}
		The class in $KK_1(C( \S^{n}), C_0(\R^{n+1}))$ that we want to associate to $\imath: \S^n \to \R^{n+1}$ is the shriek class, or ``wrong-way'' map.
		We will start by defining an odd unbounded Kasparov $C( \S^{n-1})$-$C_0(\R^{n})$ cycle, and then show in the next subsection that the corresponding bounded transform represents the shriek class as constructed by Connes and Skandalis.
		
		%% We again use the extension of $\imath: \S^n \into \R^{n+1}$ given by
		%% \begin{align*}
		%% 	\tilde{\imath}:  \S^{2k-1} \times (-\varepsilon, \varepsilon) & \to \R^{2k} \\
		%% 	 (\vec{\theta}, s) & \mapsto (\vec{\theta}, s + 1)
		%% \end{align*}
		%% in spherical coordinates for the Euclidean space, see also Figure \ref{fig:image_iota} for the case where $n=1$.
		%% This homeomorphism allows us to map continuous functions on a restriction of the normal bundle of $ \S^n$ to continuous functions on $\R^{n+1}$ and vice versa.
		
		Let $\E$ denote the vector space $C_0( \S^n \times (-\varepsilon, \varepsilon))$ and equip it with the $C_0(\R^{n+1})$-valued sesquilinear form
		\begin{equation}
			\langle \psi, \phi \rangle_\E (\vec{\theta}, r) := 
			\begin{cases}
				\frac{1}{r^n} \overline{\psi}(\tilde{\imath}^{-1}(\vec{\theta}, r))\phi(\tilde{\imath}^{-1}(\vec{\theta}, r), &\quad (\vec{\theta}, r) \in \tilde{\imath}( \S^n \times (-\varepsilon, \varepsilon)), \\
				0, &\quad (\vec{\theta}, r) \notin \tilde{\imath}( \S^n \times (-\varepsilon, \varepsilon)).
			\end{cases}
			\label{eq:def_inner_product_on_E}
		\end{equation}
		Furthermore, equip $\E$ with a left- and right-action by $C( \S^n)$ and $C_0(\R^{n+1})$ respectively, by setting
		\begin{equation}
			(g \cdot \psi \cdot h)(\vec{\theta}, s) = g(\vec{\theta}) \psi(\vec{\theta}, s) h(\tilde{\imath}(\vec{\theta}, s)), \label{eq:actions_on_E}
		\end{equation}
		for $g \in C( \S^n)$, $\psi \in \E$ and $h \in C_0(\R^{n+1})$.
		
		\begin{lemma}
		  The sesquilinear form $\langle \cdot, \cdot \rangle_\E$ in Equation \ref{eq:def_inner_product_on_E} turns %is an inner product that makes
                  $\E$ into a Hilbert $C( \S^n)$-$C_0(\R^{n+1})$ bimodule, with left and right actions as in Equation \ref{eq:actions_on_E}.
			\label{lem:E_is_Hilbert_bimodule}
		\end{lemma}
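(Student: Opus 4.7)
The plan is to verify the four defining properties of a Hilbert bimodule in the following order: first that $\langle\cdot,\cdot\rangle_\E$ genuinely takes values in $C_0(\R^{n+1})$, next the algebraic axioms (sesquilinearity, positivity, nondegeneracy, and right-$C_0(\R^{n+1})$-linearity), then completeness of $\E$ in the induced norm, and finally boundedness and adjointability of the left $C(\S^n)$-action. The key geometric observation that makes everything easy is that in spherical coordinates $\tilde\imath(\vec\theta, s) = (\vec\theta, s+1)$ is a diffeomorphism from $\S^n \times (-\varepsilon,\varepsilon)$ onto the open annulus $A = \{x \in \R^{n+1} : 1-\varepsilon < |x| < 1+\varepsilon\}$, so the radial weight $r^{-n}$ is bounded above and below by positive constants (namely $(1+\varepsilon)^{-n}$ and $(1-\varepsilon)^{-n}$) on the support region.

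For the $C_0(\R^{n+1})$-valuedness, I would observe that on $A$ the expression $r^{-n}\,\overline{\psi\circ\tilde\imath^{-1}}\,(\phi\circ\tilde\imath^{-1})$ is continuous by composition, and since $\psi,\phi\in C_0(\S^n\times(-\varepsilon,\varepsilon))$ vanish as $s\to\pm\varepsilon$, the piecewise definition matches continuously with zero across $\partial A$. The result is supported in the compact set $\overline{A}$ and is therefore in $C_0(\R^{n+1})$. The algebraic axioms are then direct: sesquilinearity is built into the definition, positivity follows from $\langle\psi,\psi\rangle_\E(\vec\theta,r)=r^{-n}|\psi(\tilde\imath^{-1}(\vec\theta,r))|^2\ge 0$, and nondegeneracy from the fact that $\tilde\imath^{-1}$ is a diffeomorphism onto its image. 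Right-linearity with respect to the action \eqref{eq:actions_on_E} is a one-line cancellation using $\tilde\imath(\tilde\imath^{-1}(\vec\theta,r))=(\vec\theta,r)$.

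For completeness, the induced norm is
\begin{equation*}
\|\psi\|_\E^2 \;=\; \sup_{(\vec\theta,s)\in \S^n\times(-\varepsilon,\varepsilon)} \frac{1}{(s+1)^n}\,|\psi(\vec\theta,s)|^2,
\end{equation*}
and the bounds on $r^{-n}$ give $(1+\varepsilon)^{-n/2}\|\psi\|_\infty \le \|\psi\|_\E \le (1-\varepsilon)^{-n/2}\|\psi\|_\infty$. Thus the $\E$-norm is equivalent to the supremum norm on $C_0(\S^n\times(-\varepsilon,\varepsilon))$, in which the space is already complete, so no further completion is necessary. Finally, for the left action one checks $\|g\cdot\psi\|_\E\le\|g\|_\infty\|\psi\|_\E$, and a direct computation shows $\langle g\cdot\psi,\phi\rangle_\E = \langle\psi,\overline{g}\cdot\phi\rangle_\E$ pointwise, so multiplication by $g$ is adjointable with adjoint given by multiplication by $\overline g$.

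The whole argument is essentially bookkeeping; the only place where genuine care is required is the continuity of $\langle\psi,\phi\rangle_\E$ across $\partial A$, which rests entirely on the vanishing of elements of $C_0(\S^n\times(-\varepsilon,\varepsilon))$ at the ends of the cylinder. There is no analytic obstruction from the weight $r^{-n}$ because the tubular coordinate $r$ never reaches $0$.
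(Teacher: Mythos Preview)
Your proof is correct and follows essentially the same approach as the paper: the key point in both is that the weight $(1+s)^{-n}$ is bounded above and away from zero on $(-\varepsilon,\varepsilon)$, so the $\E$-norm is equivalent to the sup norm on $C_0(\S^n\times(-\varepsilon,\varepsilon))$, giving positive-definiteness and completeness at once. Your write-up is more explicit than the paper's (which leaves the remaining verifications to the reader), and your exponent $(1+s)^{-n/2}$ is in fact the correct one for general $n$, whereas the paper's displayed formula $\tfrac{1}{\sqrt{1+s}}$ appears to be a typo carried over from the $n=1$ case.
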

		\begin{proof}
			The norm induced by $\langle \cdot, \cdot \rangle_\E$ is 
%			\begin{equation*}
$				\|\psi\|_\E = \left\| \frac{1}{\sqrt{1+s}} \psi \right\|_{\sup}$.
%			\end{equation*}
			Since $\frac{1}{\sqrt{1+s}}$ is bounded both from above and away from zero on $(-\varepsilon, \varepsilon)$ this immediately implies that the sesquilinear form is positive definite and that $\E$ is complete.
			The remaining properties are simple verifications.
		\end{proof}
		
		The self-adjoint and regular operator for our candidate unbounded Kasparov cycle representing the shriek class will be the multiplication operator by the function
		\begin{equation*}
			f(s) = \alpha \tan(\alpha s)
		\end{equation*}
		where $\alpha = \frac{\pi}{2\varepsilon}$.
		More precisely, define
		\begin{align}
			& \dom(S) = \{ \psi \in \E | f \psi \in \E \}, \label{eq:definition_of_S}  \qquad (S \psi)(\theta, s) = f(s) \psi(\theta, s). 
		\end{align}
		
		\begin{lemma}
			The operator $S$ defined in Equation \ref{eq:definition_of_S} is self-adjoint, regular and has compact resolvent.
			\label{lem:analysis_of_S}
		\end{lemma}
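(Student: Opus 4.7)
My plan is to exploit the fact that $S$ is multiplication by a real-valued continuous function on $(-\varepsilon, \varepsilon)$ which blows up at both endpoints. Self-adjointness and regularity should follow from an explicit inversion of $S \pm i$, and compactness of the resolvent should follow once one identifies $\K(\E)$ with multiplication by $C_0(\S^n \times (-\varepsilon, \varepsilon))$.

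First I would check that $\dom(S)$ is dense in $\E$: any $\psi \in C_c(\S^n \times (-\varepsilon, \varepsilon))$ has $f\psi$ also compactly supported, hence in $\E$, so such $\psi$ lie in $\dom(S)$, and this subspace is already dense in $\E$. Symmetry of $S$ follows directly from the real-valuedness of $f$ together with the explicit formula \eqref{eq:def_inner_product_on_E}: for $\psi, \phi \in \dom(S)$ one has $\langle S\psi, \phi\rangle_\E = f\langle \psi, \phi\rangle_\E = \langle \psi, S\phi\rangle_\E$ pointwise. To promote this to self-adjoint regularity in the Hilbert module sense, it is enough to exhibit surjectivity of $S \pm i: \dom(S) \to \E$. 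Given $\phi \in \E$ I would set
\begin{equation*}
  \psi(\vec{\theta}, s) := \frac{\phi(\vec{\theta}, s)}{f(s) \pm i}.
\end{equation*}
Because $\alpha \tan(\alpha s) \to \pm \infty$ as $s \to \pm \varepsilon$, the function $(f \pm i)^{-1}$ is continuous and bounded on $(-\varepsilon, \varepsilon)$ with $|(f\pm i)^{-1}| \le 1$ everywhere and $|(f \pm i)^{-1}(s)| \to 0$ at both endpoints. Hence $\psi \in C_0(\S^n \times (-\varepsilon, \varepsilon)) = \E$, and the identity $f\psi = \phi \mp i\phi/(f \pm i)$ shows $f\psi \in \E$, so $\psi \in \dom(S)$ and $(S \pm i)\psi = \phi$.

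For the compact resolvent I would first compute the rank-one operators on $\E$ explicitly: a direct calculation from \eqref{eq:def_inner_product_on_E} yields
\begin{equation*}
  \bigl(|\xi\rangle\langle \eta|\,\psi\bigr)(\vec{\theta}, s) = \frac{\xi(\vec{\theta},s)\,\overline{\eta(\vec{\theta},s)}}{(1+s)^n}\,\psi(\vec{\theta},s),
\end{equation*}
i.e.\ multiplication by the function $\xi\bar\eta/(1+s)^n$. Since $(1+s)^{-n}$ is bounded above and away from zero on $(-\varepsilon, \varepsilon)$, a standard Stone--Weierstrass argument then identifies $\K(\E)$ with multiplication by elements of $C_0(\S^n \times (-\varepsilon, \varepsilon))$. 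The function $(f \pm i)^{-1}$ lies in this algebra by the endpoint behaviour noted above, so $(S \pm i)^{-1}$ is compact. The only mildly delicate step is the Stone--Weierstrass density; once that is in place the rest is bookkeeping, and no essential obstacle remains.
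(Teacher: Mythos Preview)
Your argument is correct and follows essentially the same route as the paper: both prove self-adjointness and regularity by explicitly inverting $S\pm i$ via multiplication by $(f\pm i)^{-1}\in C_0(\S^n\times(-\varepsilon,\varepsilon))$, and both establish compact resolvent by identifying $\K(\E)$ with $C_0(\S^n\times(-\varepsilon,\varepsilon))$. The only cosmetic difference is that the paper obtains this identification in one line from the equivalence of $\|\cdot\|_\E$ with the sup norm (Lemma~\ref{lem:E_is_Hilbert_bimodule}), whereas you compute the rank-one operators and invoke Stone--Weierstrass; both are fine.
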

		\begin{proof}
			For self-adjointness and regularity it suffices to show that $S \pm i$ are surjective.
			Let $\psi \in \E$, then also $\phi := \frac{1}{f \pm i}\psi \in \E$ since $\frac{1}{f+i}$ is in $C_0( \S^{n-1} \times (-\varepsilon, \varepsilon))$.
			Clearly $(S \pm i)\phi = \psi$, hence $S \pm i$ is surjective.
			
			To see that $(S \pm i)^{-1}$ are compact, recall that $\K(C_0(X)) = C_0(X)$ for any locally compact Hausdorff space $X$, where $C_0(X)$ is viewed as a Hilbert module over itself.
			Using the same equivalence of norms we saw in Lemma \ref{lem:E_is_Hilbert_bimodule} we find that $\K(\E) = C_0( \S^{n-1} \times (-\varepsilon, \varepsilon))$, so that $(S \pm i)^{-1}$ is indeed compact.
		\end{proof}
		
		\begin{proposition}
			The data $(\E, S)$ defines an odd unbounded Kasparov cycle between $C( \S^n)$ and $C_0(\R^{n+1})$.
		\end{proposition}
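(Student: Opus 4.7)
The plan is to verify the four standard ingredients of an odd unbounded Kasparov cycle: that $\E$ is a Hilbert $C(\S^n)$-$C_0(\R^{n+1})$ bimodule; that $S$ is self-adjoint and regular; that $a(S\pm i)^{-1}$ is compact for every $a \in C(\S^n)$; and that the $\ast$-subalgebra of $a \in C(\S^n)$ for which $[S,a]$ extends to a bounded operator on $\E$ is dense. The first two items are exactly the content of Lemma \ref{lem:E_is_Hilbert_bimodule} and Lemma \ref{lem:analysis_of_S}, respectively, so they can simply be quoted.

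Since $C(\S^n)$ is unital and acts by adjointable operators, the compactness condition on $a(S\pm i)^{-1}$ reduces to the compactness of $(S\pm i)^{-1}$ alone, which is the last assertion of Lemma \ref{lem:analysis_of_S}. So the only thing left to address is the boundedness of the commutators. Here the key observation is a separation of variables in the definition: the left action of $g \in C(\S^n)$ is multiplication by $g(\vec\theta)$, which depends only on the angular variable, whereas $S$ is multiplication by $f(s) = \alpha \tan(\alpha s)$, which depends only on the normal variable $s$. For any $\psi \in \dom(S)$, the boundedness of $g$ ensures that $f \cdot (g\psi) = g\cdot(f\psi)$ still lies in $\E$, so $g\psi \in \dom(S)$, and $Sg\psi = gS\psi$ pointwise. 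Consequently $[S,g]=0$ on $\dom(S)$ for \emph{every} $g \in C(\S^n)$, and the commutator trivially extends to the zero operator.

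In short, the only non-trivial analytic content has been absorbed into the two preceding lemmas; the remaining conditions follow from unitality of $C(\S^n)$ and from the fact that the left action and the radial operator $S$ act on disjoint sets of coordinates. I do not anticipate any real obstacle here, since the separation of angular and radial variables makes the commutator check essentially immediate.
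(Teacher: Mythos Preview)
Your proposal is correct and follows the same approach as the paper: quote Lemma~\ref{lem:E_is_Hilbert_bimodule} for the bimodule structure and Lemma~\ref{lem:analysis_of_S} for self-adjointness, regularity, and compact resolvent. The paper's own proof is in fact terser than yours---it simply cites the two lemmas and stops---so your explicit verification that $[S,g]=0$ for every $g\in C(\S^n)$, via the separation of angular and radial variables, makes explicit a point the paper leaves implicit.
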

		\begin{proof}
			We know from Lemma \ref{lem:E_is_Hilbert_bimodule} that $\E$ is indeed a Hilbert bimodule between $C( \S^{n-1})$ and $C_0(\R^{n})$ and from Lemma \ref{lem:analysis_of_S} that $S$ has all properties to make $(\E, S)$ into an odd unbounded Kasparov cycle.
		\end{proof}
		
		We now have an unbounded Kasparov cycle, but we want to add one final piece of data.
		Namely, for the purpose of computing the product of this unbounded Kasparov cycle with $[\R^{n+1}]$ %, in the spirit of \cite{KaadLeschUnbddProd} and \cite{Mesland},
                we also need a connection on $\E$ relative to $D_{\R^{2k}}$ if $n = 2k-1$ and to $\widetilde{D_{\R^{2k+1}}} = D_{\R^{2k+1}} \otimes \gamma^2$ if $n = 2k$.
		In the following we write, in an abuse of notation, $D_{\R^{n+1}}$ for both $D_{\R^{2k}}$ and $\widetilde{D_{\R^{2k+1}}}$.
		
		\begin{lemma}
			The map
			\begin{equation*}
				\nabla^\E:C_0^1( \S^n \times (-\varepsilon, \varepsilon)) \to \E \otimes_{C_0(\R^{n+1})} \Omega_{D_{\R^{n+1}}}^1
			\end{equation*}
			defined in local spherical coordinates $\vec{\theta} = (\theta^1, ..., \theta^n)$ on $ \S^n$ by 
			\begin{equation*}
				\nabla^\E(\psi) = \left( \frac{\partial \psi}{\partial s} - \frac{n}{2(s+1)}\psi \right) \otimes [D_{\R^{n+1}}, r] + \sum_{i=1}^{n} \frac{\partial \psi}{\partial \theta^i} \otimes [D_{\R^{n+1}}, \theta^i]
			\end{equation*}
                        %			using local coordinates $\vec{\theta} = (\theta^1, ..., \theta^n)$ on $ \S^n$,
                        is a metric connection on $\E$.
			\label{lem:connection}
		\end{lemma}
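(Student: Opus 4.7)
The plan is to verify the two defining properties of a metric connection on $\E$: the right Leibniz rule with respect to $C_0(\R^{n+1})$, and compatibility with the $\E$-valued inner product. A small preliminary is well-definedness of the image: although $r$ and the $\theta^i$ are not themselves in $C_0(\R^{n+1})$, on $\tilde\imath(\S^n\times(-\varepsilon,\varepsilon))$ they agree with smooth compactly supported cutoffs on $\R^{n+1}$, and since the tensor product is balanced over $C_0(\R^{n+1})$ each summand represents an unambiguous element of $\E\otimes_{C_0(\R^{n+1})}\Omega^1_{D_{\R^{n+1}}}$.

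For the Leibniz rule, take $\psi\in C_0^1(\S^n\times(-\varepsilon,\varepsilon))$ and a smooth compactly supported $h$ on $\R^{n+1}$. From $(\psi\cdot h)(\theta,s)=\psi(\theta,s)\,h(\tilde\imath(\theta,s))$ the chain rule gives
\begin{equation*}
\partial_s(\psi\cdot h)=(\partial_s\psi)\,(h\circ\tilde\imath)+\psi\cdot(\partial_r h\circ\tilde\imath),\qquad \partial_{\theta^i}(\psi\cdot h)=(\partial_{\theta^i}\psi)\,(h\circ\tilde\imath)+\psi\cdot(\partial_{\theta^i}h\circ\tilde\imath),
\end{equation*}
while the zero-order correction is purely multiplicative: $\tfrac{n}{2(s+1)}(\psi\cdot h)=\bigl(\tfrac{n}{2(s+1)}\psi\bigr)\cdot h$. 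Expressing $[D_{\R^{n+1}},h]$ in spherical coordinates as $(\partial_r h)[D_{\R^{n+1}},r]+\sum_i(\partial_{\theta^i}h)[D_{\R^{n+1}},\theta^i]$ and moving the pulled-back scalar factors through the balanced $\otimes$ as their counterparts on $\R^{n+1}$, one assembles $\nabla^\E(\psi\cdot h)=\nabla^\E(\psi)\cdot h+\psi\otimes[D_{\R^{n+1}},h]$.

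For metric compatibility one checks
\begin{equation*}
[D_{\R^{n+1}},\langle\psi,\phi\rangle_\E]=\langle\nabla^\E\psi,\phi\rangle_\E+\langle\psi,\nabla^\E\phi\rangle_\E
\end{equation*}
by direct computation in spherical coordinates. The left-hand side expands as $(\partial_r\langle\psi,\phi\rangle_\E)[D_{\R^{n+1}},r]+\sum_i(\partial_{\theta^i}\langle\psi,\phi\rangle_\E)[D_{\R^{n+1}},\theta^i]$; using $\langle\psi,\phi\rangle_\E=\frac{1}{r^n}\overline{\psi}\phi$ on the tubular neighborhood, the radial derivative equals $-\frac{n}{r^{n+1}}\overline{\psi}\phi+\frac{1}{r^n}(\partial_s\overline{\psi}\cdot\phi+\overline{\psi}\,\partial_s\phi)$. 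On the right, the symmetrized zero-order contribution is $\langle-\tfrac{n}{2(s+1)}\psi,\phi\rangle_\E+\langle\psi,-\tfrac{n}{2(s+1)}\phi\rangle_\E=-\frac{n}{(s+1)^{n+1}}\overline{\psi}\phi$, which absorbs precisely the Jacobian-weight term above, using that the coefficient is real. The angular directions require no correction and match immediately from the Leibniz rule for $\partial_{\theta^i}$, since the weight $\frac{1}{r^n}$ depends on $r$ only.

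The main obstacle is bookkeeping: pulling back along $\tilde\imath$ to switch between $(\theta,s)$-derivatives on $\S^n\times(-\varepsilon,\varepsilon)$ and $(r,\theta^i)$-derivatives on $\R^{n+1}$, and handling the balanced tensor product so that scalar factors of the form $h\circ\tilde\imath$ cross $\otimes$ as $h$. The specific coefficient $-\frac{n}{2(s+1)}$ is in fact forced, in the radial direction alone, by the $r^{-n}$ weight in the inner product together with the symmetrization imposed by metric compatibility, and the uniform formulation covers both parities of $n$ since the doubling $D\otimes\gamma^2$ in the even-sphere case commutes through every step of the argument.
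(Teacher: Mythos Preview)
Your proof is correct and follows essentially the same approach as the paper's: both verify the Leibniz rule directly and establish metric compatibility by showing that the correction term $-\tfrac{n}{2(s+1)}$ exactly absorbs the derivative of the weight $r^{-n}$ in $\langle\cdot,\cdot\rangle_\E$. The paper organizes the metric-compatibility computation by introducing an auxiliary ``flat'' connection $\nabla$ (your $\nabla^\E$ without the zero-order term) together with the unweighted inner product, and then uses that $\nabla$ is metric for the flat pairing plus the product rule for $r^{-n}$; your direct expansion in spherical coordinates is the same computation without that intermediate notation.
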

		\begin{proof}
			The connection property is a straightforward check.
%			Recall that $\nabla^\E$ is metric on $\E$ if
%			\begin{equation*}
%				\langle \nabla^\E(\psi), \phi \rangle_\E + \langle \psi, \nabla^\E(\phi) \rangle_\E = [D_{\R^{n+1}}, \langle \psi, \phi \rangle_\E].
%			\end{equation*}
%			The expression $\langle \nabla^\E(\psi), \phi \rangle_E$ evaluates to an element of $\Omega^1(D_{\R^{n+1}})$ by taking the $\E$-inner product of $\phi$ with the $\E$ part of $\nabla(\psi)$, which yields an element of $C_0(\R^{n+1})$ which then acts on the one-form part of $\nabla(\psi)$.
						If we write $\nabla$ for the ``flat'' connection on $\E$, that is, $\nabla^\E$ without the $-\frac{n}{2(s+1)}$ term, and $\langle \cdot, \cdot \rangle$ for the ``flat'' inner product, {\em i.e.} without the factor $\frac{1}{r^n}$, it follows from the fact that $\nabla$ is a metric connection for $\langle \cdot, \cdot \rangle$ that
			\begin{align*}
				\langle \nabla^\E(\psi), \phi \rangle_\E + \langle \psi, \nabla^\E(\phi) \rangle_\E & = \frac{1}{r^n} \langle \nabla(\psi), \phi \rangle + \frac{1}{r^n} \langle \nabla(\psi), \phi \rangle - \frac{n}{r^{n+1}} \langle \psi, \phi \rangle \otimes [D_{\R^{n+1}}, r], \\
				& = \frac{1}{r^n}[D_{\R^{n+1}}, \langle \psi, \phi \rangle] + \left[D_{\R^{n+1}}, \frac{1}{r^n}\right] \langle \psi, \phi \rangle, \\
				& = [D_{\R^{n+1}}, \langle \psi, \phi \rangle_\E].
			\end{align*}
 so that $\nabla^\E$ is a metric connection. 
		\end{proof}
		
		As a final preparation for computing the products we need to use even Kasparov cycles, so we use doubled versions of the index class ({\em cf.} Appendix \ref{sect:app}).
		In the case where $n$ is odd we use left-doubling, so our shriek cycle becomes
		\begin{equation*}
		  %\widetilde{\imath_!} =
                  (\E \otimes \C^2, S \otimes \gamma^2; 1 \otimes \gamma^3; \nabla \otimes \id)% \in KK_0(C( \S^n) \otimes \Cl_1, C(\R^{n+1})).
		\end{equation*}
                representing a class in $KK_0(C( \S^n) \otimes \Cl_1, C(\R^{n+1}))$. When $n$ is even we use right-doubling, which makes our shriek cycle
		\begin{equation*}
		  %\widetilde{\imath_!} =
                  (\E \otimes \C^2, S \otimes \gamma^1; 1 \otimes \gamma^3; \nabla \otimes \id),
		\end{equation*}
           this time representing a class in $KK_0(C( \S^n), C(\R^{n+1}) \otimes \Cl_1)$. In both cases we will denote the unbounded KK-cycle by $(\tilde \E,\tilde S)$ and the shriek cycle $(\E,\bdd(S))$ obtained in KK-theory by bounded transform by $\imath_!$. 
		
	\subsection{Equivalence to bounded construction}
		
	We will now %take a brief break from unbounded Kasparov cycles to show that the name ``shriek cycle'' is justified, by showing
        show that the bounded transform $(\E, \bdd(S))$ is homotopic to the shriek cycle as constructed in \cite{ConnesSkandalis}.
		This in fact already proves the factorization $[ \S^n] = \imath_! \otimes [\R^{n+1}]$ as $KK$-classes, but we want to prove this factorization in full geometric detail in the unbounded KK-theoretic context. 
		
		In \cite{ConnesSkandalis} one allows any map $\tilde{\imath}_{CS}: \S^n \times \R \to \R^{n+1}$ which is a diffeomorphism onto a tubular neighbourhood of $\imath( \S^n) \subset \R^{n+1}$.
		For our purposes we choose $\tilde{\imath}_{CS}$ such that $\tilde{\imath}_{CS}|_{ \S^n \times (-\varepsilon, \varepsilon)} \equiv \tilde{\imath}$.
		
		One defines a $C_0(\R^{n+1})$-valued sesquilinear form $\langle \cdot, \cdot \rangle_{CS}$ on $C_c( \S^n \times \R)$ by setting
		\begin{align*}
			\langle \psi, \phi \rangle_{CS}(x) = \overline{\psi}(\tilde{\imath}_{CS}^{-1}(x))\phi(\tilde{\imath}_{CS}^{-1}(x))
		\end{align*}
		for $x$ in the tubular neighbourhood, and $\langle \psi, \phi \rangle_{CS} = 0$ elsewhere.
		There is a left $C( \S^n)$ action and a right $C_0(\R^{n+1})$ action on $C_c(\S^n \times \R)$ given by
		\begin{align*}
			(g \cdot \psi \cdot h)(\vec{\theta}, s) = g(\vec{\theta})\psi(\vec{\theta}, s)h(\tilde{\imath}_{CS}(\vec{\theta}, s)).
		\end{align*}
		This turns $C_c( \S^n \times \R)$ into a pre-Hilbert bimodule; denote by $\E_{CS}$ the corresponding Hilbert $C( \S^n)$-$C_0(\R^{n+1})$-bimodule.
		It is easy to see that $\E_{CS} = C_0( \S^n \times \R)$.
		
		Next, one chooses a function $M:[0, \infty) \to [0, 1]$ such that $M(0) = 1$ and $M$ has compact support.
		On $\E_{CS}$ define an operator $F:\E_{CS} \to \E_{CS}$ by
		\begin{align*}
			(F \psi)(\theta, s) = \sqrt{1 - M(|s|)} \frac{s}{|s|} \psi(\theta, s).
		\end{align*}
		
		For instance, we may choose
		\begin{align*}
			M(s) = 
				\begin{cases}
					\frac{1}{1+f(s)^2} & \quad s \in [0, \varepsilon), \\
					0 & \quad s \geq \varepsilon,
				\end{cases}
		\end{align*}
		so that
		\begin{align*}
			(F \psi)(\theta, s) = 
				\begin{cases}
					-\psi(\theta, s) & \quad s \leq -\varepsilon, \\
					\frac{f(s)}{\sqrt{1+f(s)^2}}\psi(\theta, s) & \quad s \in (-\varepsilon, \varepsilon), \\
					\psi(\theta, s) & \quad s \geq \varepsilon.
				\end{cases}
		\end{align*}
		
		This already closely resembles $(\E, \bdd(S))$, the major difference is that $\E$ uses $(-\varepsilon, \varepsilon)$ as fibre with an operator tending to 1 at the edge, while $\E_{CS}$ uses $\R$ as fibre with an operator that equals 1 outside $(-\varepsilon, \varepsilon)$. However, the two cycles represent the same class in KK-theory because of the following result. 
                \begin{proposition}
The two bounded KK-cycles $(\E,\bdd(S))$ and $(\E,F)$ are homotopic and, consequently, they define the same class in $KK(C( \S^n), C_0(\R^{n+1}))$.
                  \end{proposition}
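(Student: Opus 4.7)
The proposition (reading $(\E,F)$ as the Connes--Skandalis cycle $(\E_{CS}, F)$) hinges on the observation that on the common region $\S^n \times (-\varepsilon, \varepsilon)$ both operators act by multiplication by the same function $f(s)/\sqrt{1+f(s)^2}$, while on the complementary region $\S^n \times (\R \setminus (-\varepsilon, \varepsilon))$ the Connes--Skandalis operator $F$ takes the constant values $\pm 1$, contributing a degenerate piece ($F^2=1$, vanishing commutators with all actions) to the cycle.

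I would first exhibit an isometric $C(\S^n)$-$C_0(\R^{n+1})$-bimodule embedding $U: \E \hookrightarrow \E_{CS}$ by setting $(U\psi)(\vec{\theta}, s) = (1+s)^{-n/2}\psi(\vec{\theta}, s)$ for $|s| < \varepsilon$ and extending by zero. Using $r = 1 + s$ on the image of $\tilde{\imath}$ (which coincides with $\tilde\imath_{CS}$ on $\S^n\times(-\varepsilon,\varepsilon)$), a direct computation gives $\langle U\psi, U\phi\rangle_{CS} = \langle \psi, \phi\rangle_\E$, and comparing the explicit formulas for $\bdd(S)$ and $F|_{(-\varepsilon,\varepsilon)}$ yields $F \circ U = U \circ \bdd(S)$. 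Hence $(\E, \bdd(S))$ is unitarily equivalent to the sub-cycle $(U(\E), F|_{U(\E)})$ of $(\E_{CS}, F)$.

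To produce an actual homotopy between $(\E_{CS}, F)$ and this sub-cycle, I would shrink the tail. Choose a continuous family of diffeomorphisms $\phi_t: \R \to J_t$ with $J_0 = \R$, $\phi_0 = \id$, and $J_1 = (-\varepsilon, \varepsilon)$, and assemble a Hilbert $C(\S^n)$-$C_0(\R^{n+1}) \otimes C([0,1])$-bimodule $\F$ whose fiber at $t$ is the appropriately weighted $C_0(\S^n \times J_t)$, together with an operator $G$ whose fiber at $t$ is the corresponding ``shrunken'' version of $F$ (so that $G$ at $t=0$ is $F$ and at $t=1$ is $\bdd(S)$ under the identification via $U$). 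Evaluation at $t=0$ and $t=1$ then recovers the two cycles respectively.

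The main technical point is to verify that $(\F, G)$ is a genuine KK-cycle throughout the deformation. The commutators $[G, a]$ with $a \in C(\S^n)$ vanish identically since $G$ acts by multiplication by a function of $s$ alone, while $a$ depends only on $\vec{\theta}$. Compactness of $a(G^2 - 1)$ follows from an identification $\K(\F_t) = C_0(\S^n \times J_t)$ analogous to Lemma~\ref{lem:analysis_of_S}, together with the fact that $G^2 - 1 \in C_0(J_t)$ uniformly in $t$, since each operator $F_t$ tends to $\pm 1$ at the endpoints of the shrinking interval $J_t$. Controlling this uniform behaviour along the family is the chief obstacle.
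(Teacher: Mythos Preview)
Your approach is essentially the paper's: build a homotopy by interpolating the fibre interval between $(-\varepsilon,\varepsilon)$ and $\R$, and use the unitary involving $(1+s)^{\pm n/2}$ to reconcile the two inner products at the appropriate endpoint. The paper runs the parametrization in the opposite direction (growing the interval from $(-\varepsilon,\varepsilon)$ at $t=0$ to $\R$ at $t=1$ via an increasing $R:[0,1)\to\R$ with $R(0)=\varepsilon$ and $R(t)\to\infty$), but the more substantive difference is that it sidesteps your ``chief obstacle'' entirely. Rather than transporting $F$ through a family of diffeomorphisms $\phi_t$, the paper \emph{keeps the operator formula fixed} and varies only the domain: on each slice $C_0(\S^n\times(-R(t),R(t)))$ the operator $G$ is given by the very same expression as $F$ (equal to $\bdd(S)$ on $(-\varepsilon,\varepsilon)$ and $\pm 1$ outside). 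Then $G^2-1$ is supported in $\S^n\times(-\varepsilon,\varepsilon)\times[0,1]$ independently of $t$, hence lies in $C_0(X)=\K(\F)$ with no uniform-in-$t$ decay to verify. Your diffeomorphism route can be made to work, but as you note it requires controlling the endpoint behaviour of the deformed operators; the paper's choice makes that issue disappear. Note also that with your $\phi_t$ as stated, the pushed-forward operator at $t=1$ is $F\circ\phi_1^{-1}$, which is not literally $\bdd(S)$ unless you arrange $\phi_1$ carefully; restricting the fixed $F$ to the shrinking domain avoids this mismatch as well.
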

\proof              
We will construct a bounded Kasparov cycle %that bridges this gap, {\em i.e.} a bounded Kasparov cycle between
between $C( \S^n)$ and $C_0(\R^{n+1})\otimes C([0,1])$ such that evaluation at 0 yields a cycle unitarily equivalent to $(\E, \bdd(S))$ and evaluation at 1 yields a cycle equivalent to $(\E_{CS}, F)$. 
		%% Then, by definition of $KK$-classes up to homotopy, $[(\E, \bdd(S))] = [(\E_{CS}, F)]$ in $KK_1(C( \S^n), C_0(\R^{n+1}))$.
		
		Let $R:[0,1) \to \R$ be any increasing function such that $R(0) = \varepsilon$ and $R(x) \to \infty$ as $x \to 1$.
		Define $X \subset  \S^n \times \R \times [0,1]$ by $(\vec{\theta}, s, t) \in X$ if $t = 1$ or $|s| < R(t)$ for $t < 1$.
		
		Set $\F = C_0(X)$, and define a $C_0(\R^{n+1})\otimes C([0,1]) = C_0(\R^{n+1} \times [0,1])$-valued sesquilinear form on $\F$ by
		\begin{align*}
			\langle \psi, \phi \rangle_{CS}(\vec{\theta}, r, t) = 
			\begin{cases}
				\overline{\psi}(\tilde{\imath}^{-1}(\vec{\theta}, r), t)\phi(\tilde{\imath}^{-1}(\vec{\theta}, r), t) &\, (\vec{\theta}, r) \in \tilde{\imath}( \S^n \times (-R(t), R(t))), \\
				0, &\, (\vec{\theta}, r) \notin \tilde{\imath}( \S^n \times (-R(t), R(t))).
			\end{cases}
		\end{align*}
		We may equip $\F$ with a left-$C( \S^n)$ and right-$C_0(\R^{n+1} \times [0,1])$ module structure by setting
		\begin{align*}
			(f \cdot \psi \cdot g)(\vec{\theta}, s, t) = f(\vec{\theta})\psi(\vec{\theta}, s, t)g(\tilde{\imath}_{CS}(\vec{\theta}, s), t).
		\end{align*}
		Note that the norm on $\F$ induced by this inner product is simply the $\sup$-norm on $C_0(X)$, so that $\F$ is indeed a Hilbert bimodule.
		Then $\mathcal{L}(\F) = C_b(X)$ and $\K(\F) = C_0(X)$.
		
		Now we define an operator $G$ on $\F$ by
		\begin{align*}
			(G \psi)(\theta, s, t) = 
				\left\{
				\begin{array}{l l}
					-\psi(\theta, s, t), & s \leq -\varepsilon \\
					\frac{f(s)}{\sqrt{1+f(s)^2}}\psi(\theta, s, t), & s \in (-\varepsilon, \varepsilon) \\
					\psi(\theta, s, t). & s \geq \varepsilon
				\end{array}
			\right.
		\end{align*}
		Note that $G^2 - 1$ is in $\K(\F)$ since it is in $C_0(X)$.

                We claim that $(\F, G)$ is a homotopy between $(\E, \bdd(S))$ and $(\E_{CX}, F)$. Indeed, for $i = 0,1$ we denote by $B_i$ the Hilbert bimodule corresponding to the $C^*$-homomorphism $\phi_i:C_0(\R^{n+1} \times [0,1]) \to C_0(\R^{n+1})$, $\phi(f)(\theta, r) = f(\theta, r, i)$.
			For the evaluation at $t = 0$ the map
			\begin{align*}
				& U:\F \otimes_{C_0(\R^{n+1} \times [0,1])} B_0 \to \E \\
				& U(\psi \otimes g)(\vec{\theta}, s) = (s + 1)^{\frac{n}{2}}\psi(\vec{\theta}, s, 0)g(\tilde{\imath}(\vec{\theta}, s)).
			\end{align*}
			is a unitary equivalence between $(\F \otimes_{C_0(\R^{n+1} \times [0,1])} B_0, G \otimes 1)$ and $(\E, \bdd(S))$.
		
			At $t = 1$ the map
			\begin{align*}
				& V:\F \otimes_{C_0(\R^{n+1} \times [0,1])} B_1 \to \E_{CS} \\
				& V(\psi \otimes g)(\vec{\theta}, s) = \psi(\vec{\theta}, s, 1)g(\tilde{\imath}_{CS}(\vec{\theta}, s)).
			\end{align*}
			is a unitary equivalence between $(\F \otimes_{C_0(\R^{n+1} \times [0,1])} B_1, G \otimes 1)$ and $(\E_{CS}, F)$.
		\endproof
		
		%% \begin{remark}
		%% 	A notable point here is that the homotopy $(\F, G)$ at $t = 0$ lacks the factor $\frac{1}{r^n}$ in the inner product, but still is unitarily equivalent to $(\E, S)$.
		%% 	This exemplifies that in $KK$-theory only the topology, and not the metric, is captured.
			
		%% 	Taking an unbounded viewpoint, we have seen in Lemma \ref{lem:connection} that the factor $\frac{1}{r}$ \emph{does} influence the connection.
		%% \end{remark}
	
	\subsection{The index class}
	\label{sec:index_class}
		
		In our sought-for KK-factorization of $D_{\S^n}$ in terms of $D_{\R^{n+1}}$, the cycle $(\tilde{\E}, \tilde{S})$ should in some way cancel out the normal, or radial, direction.
		This dimension reduction is, in bounded $KK$-theory, accomplished by a dual-Dirac element, as in \cite{Echterhoff}.
		In our case $\tilde{S}$ is expected to act as an unbounded dual-Dirac element, and this leads us to investigate the interaction between the radial derivative in $D_{\R^{n+1}}$ and the radial function defining $\tilde{S}$.
		
		So, let us define a symmetric operator $T_0$ on $\dom(T_0) = C_c^\infty((-\varepsilon, \varepsilon), \C^2) \subset L^2((-\varepsilon, \varepsilon), \C^2)$ by 
		\begin{equation*}
			T_0 \psi = i\gamma^1 \partial_s \psi + \gamma^2 f(s)\psi = \matTwo{0}{i\partial_s - if(s)}{i\partial_s + if(s)}{0}\psi,
		\end{equation*}
		where $f(s) = \alpha \tan(\alpha s)$ and $\alpha = \frac{\pi}{2\varepsilon}$ as before.
		
		We want to show that the closure $T:= \overline{T_0}$, together with Hilbert space $L^2((-\varepsilon, \varepsilon), \C^2)$ and grading $\gamma^3$, defines an even Kasparov $\C$-$\C$ cycle, and that this cycle represents the multiplicative unit in $KK_0(\C, \C)$.
		We will refer to $(L^2((-\varepsilon, \varepsilon), \C^2), T; \gamma^3)$ as the {\em index cycle}, and to the corresponding $KK_0(\C, \C)$ class as the {\em index class}, which we denote by $\1$.
		
		In order to prove essential self-adjointness of $T_0$, we first find integrating factors $I$ and $J$ for the differential equation $(T_0 + \lambda i)u = g$.
		We then use these integrating factors to show that $\ran(T_0 + \lambda i)$ is the ``orthogonal complement'' of $J$ and finally we show that this is dense.
		This argument is based on \cite[Example 33.1]{Lax}.
		
		\begin{lemma}
			\label{lem:I_J_integrating_factors}
			Suppose $u, g \in C_c^\infty((-\varepsilon, \varepsilon), \C^2)$ and $\lambda^2 = \alpha^2$, then
			\begin{equation*}
				\frac{\d}{\d x} I_\lambda u = J_\lambda g
			\end{equation*}
			if and only if $g = (T_0 + \lambda i)u$, for
			\begin{equation*}
				I_\lambda(s) = \matTwo{1 + s f(s)}{\lambda s}{\frac{1}{\lambda}f(s)}{1}, \,
				J_\lambda(s) = -i\matTwo{\lambda s}{1 + s f(s)}{1}{\frac{1}{\lambda}f(s)}. 
			\end{equation*}
		\end{lemma}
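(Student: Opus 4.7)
My plan is to prove the lemma by direct componentwise verification. The only calculus input is the elementary identity
\begin{equation*}
f'(s) = \alpha^2 + f(s)^2,
\end{equation*}
obtained by differentiating $f(s) = \alpha \tan(\alpha s)$; this is ultimately where the hypothesis $\lambda^2 = \alpha^2$ enters.

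First, writing $u = (u_1, u_2)^T$ and applying the product rule, I would compute $\frac{\d}{\d s}(I_\lambda u)$ explicitly as a $\C^2$-valued expression linear in $u_1, u_2, u_1', u_2'$ with coefficients in $s$, $f$, and $f'$. Then I expand $g = (T_0 + i\lambda)u$ componentwise from the matrix presentation of $T_0$ and substitute into $J_\lambda g$, producing a second expression of the same form. Comparing the two sides entry by entry, the coefficients of $u_1'$, $u_2'$ and $u_2$ match unconditionally by inspection, while matching the coefficients of $u_1$ in either row collapses to the single scalar requirement $\lambda^2 + f(s)^2 - f'(s) = 0$, which by the identity above is exactly $\lambda^2 = \alpha^2$. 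This gives the implication $g = (T_0 + i\lambda)u \Rightarrow \frac{\d}{\d s} I_\lambda u = J_\lambda g$.

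For the converse, a $2 \times 2$ determinant computation shows $\det I_\lambda(s) = \det J_\lambda(s) = 1$ for all $s \in (-\varepsilon,\varepsilon)$, so $J_\lambda(s)$ is invertible pointwise. Hence given $u$ the relation $\frac{\d}{\d s} I_\lambda u = J_\lambda g$ determines $g$ uniquely, so the already-established implication gives the reverse direction for free. There is no real obstacle here: the content of the lemma is simply the observation that $I_\lambda$ and $J_\lambda$ function as matrix integrating factors for the first-order system $(T_0 + i\lambda) u = g$ precisely when $\lambda^2 = \alpha^2$, and the verification reduces to tracking cancellations among four explicit scalar equations.
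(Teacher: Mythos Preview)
Your proposal is correct and follows essentially the same approach as the paper: both reduce the lemma to the single differential identity $f'(s) = \alpha^2 + f(s)^2$ satisfied by $f(s) = \alpha\tan(\alpha s)$, and both implicitly or explicitly use invertibility of $J_\lambda$ for the converse. The paper compresses the verification into the operator identity $J_\lambda^{-1}\frac{\d}{\d x} I_\lambda = T_0 + \lambda i$, whereas you spell out the componentwise comparison, but the underlying computation is identical.
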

		\begin{proof}

			Using the differential equation
			\begin{equation*}
				f(s)^2 - f'(s) + \alpha^2 = 0
			\end{equation*}
			which is satisfied by $f(s) = \alpha \tan(\alpha s)$ it is straightforward to show that
			\begin{equation*}
				J_\lambda^{-1} \frac{\d }{\d x} I_\lambda = (T_0 + \lambda i). \qedhere
			\end{equation*}

		\end{proof}
%		\begin{remark}
%			We have similar functions for $\lambda^2 \neq \alpha^2$, define $g(s) = \frac{1}{2}e^{\sqrt{\lambda^2 -\alpha^2}s} + \frac{1}{2} e^{-\sqrt{\lambda^2 - \alpha^2}s}$.
%			Then
%			\begin{align*}
%				I_\lambda(s) = \matTwo{ g(s) + \frac{1}{\lambda^2 - \alpha^2}f(s)g'(s) }{ \frac{\lambda}{\lambda^2 - \alpha^2}g'(s) }{\frac{1}{\lambda}(g'(s) + f(s)g(s))}{g(s)},
%			\end{align*}
%			and $J_\lambda(s) = -i  I_\lambda(s) \smallMatTwo{0}{1}{1}{0}$.
%			We restrict to the case $\lambda^2 = \alpha^2$ since it is sufficient and provides a simpler expression.
%		\end{remark}
		
		The next step is to show that the range of $T_0 + \lambda i$ is the ``orthogonal complement'' of $J_\lambda$ in $C_c^\infty((-\varepsilon, \varepsilon), \C)$.
		
		\begin{lemma}
			\label{lem:range_of_T0}
			For $\lambda = \pm \alpha$ and $J_\lambda$ as in Lemma \ref{lem:I_J_integrating_factors} we have
			\begin{align*}
				\ran(T_0 + \lambda i) = \left\{ g \in C_c^\infty((-\varepsilon, \varepsilon), \C^2) \, \middle| \, \int_{-\varepsilon}^\varepsilon J(x)g(x) \d x = 0 \right\}.
			\end{align*}
		\end{lemma}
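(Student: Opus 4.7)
The plan is to use Lemma \ref{lem:I_J_integrating_factors} to translate the first-order ODE $(T_0 + \lambda i) u = g$ into the much simpler equation $(I_\lambda u)' = J_\lambda g$, and then exploit the fact that $\det I_\lambda(s) = (1 + sf(s)) - \lambda s \cdot \lambda^{-1} f(s) = 1$ identically, so that $I_\lambda$ is invertible on all of $(-\varepsilon,\varepsilon)$ with smooth inverse
\[
I_\lambda^{-1}(s) = \matTwo{1}{-\lambda s}{-\lambda^{-1} f(s)}{1 + s f(s)}.
\]
The factors $I_\lambda, I_\lambda^{-1}, J_\lambda$ are all smooth on $(-\varepsilon,\varepsilon)$ but blow up at the endpoints because $f(s) = \alpha \tan(\alpha s)$ does. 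Hence the key issue is to control behaviour near $\pm\varepsilon$ via compact support.

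For the inclusion $\subseteq$, if $u \in C_c^\infty((-\varepsilon,\varepsilon),\C^2)$ and $g = (T_0 + \lambda i) u$, then $I_\lambda u$ is a compactly supported smooth function on $(-\varepsilon,\varepsilon)$, so
\[
\int_{-\varepsilon}^{\varepsilon} J_\lambda(x) g(x)\, \d x = \int_{-\varepsilon}^\varepsilon (I_\lambda u)'(x)\, \d x = 0.
\]

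For the inclusion $\supseteq$, given $g \in C_c^\infty((-\varepsilon,\varepsilon),\C^2)$ with $\int_{-\varepsilon}^{\varepsilon} J_\lambda g = 0$, I set
\[
v(x) := \int_{-\varepsilon}^x J_\lambda(t) g(t)\, \d t, \qquad u(x) := I_\lambda(x)^{-1} v(x).
\]
Then $v$ is smooth; because $g$ is compactly supported in $(-\varepsilon,\varepsilon)$, $v$ vanishes on a neighbourhood of $-\varepsilon$, and the hypothesis forces $v$ also to vanish on a neighbourhood of $\varepsilon$. So $\operatorname{supp}(v)$ is a compact subset of $(-\varepsilon,\varepsilon)$ on which $I_\lambda^{-1}$ is smooth and bounded. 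Consequently $u \in C_c^\infty((-\varepsilon,\varepsilon),\C^2)$. By construction $(I_\lambda u)' = v' = J_\lambda g$, so the ``only if'' direction of Lemma \ref{lem:I_J_integrating_factors} gives $(T_0 + \lambda i) u = g$, putting $g \in \ran(T_0 + \lambda i)$.

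The only real subtlety is the compact support of $v$ near $+\varepsilon$: without the vanishing moment $\int J_\lambda g = 0$ the product $I_\lambda^{-1} v$ would blow up like $f(s)$ as $s \to \varepsilon$, spoiling both smoothness at the endpoint and membership in the domain; this is precisely why the condition in the lemma is the obstruction to surjectivity of $T_0 + \lambda i$ on $C_c^\infty$. Everything else is a routine computation using $\det I_\lambda = 1$ and the ODE $f^2 - f' + \alpha^2 = 0$ already invoked in Lemma \ref{lem:I_J_integrating_factors}.
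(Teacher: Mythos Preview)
Your proof is correct and follows essentially the same route as the paper: both directions use Lemma \ref{lem:I_J_integrating_factors} to reduce $(T_0+\lambda i)u=g$ to $(I_\lambda u)'=J_\lambda g$, integrate for $\subseteq$, and for $\supseteq$ set $u=I_\lambda^{-1}\int_{-\varepsilon}^x J_\lambda g$. Your write-up is in fact slightly more explicit than the paper's, since you verify $\det I_\lambda \equiv 1$ (hence the smooth invertibility of $I_\lambda$ on $(-\varepsilon,\varepsilon)$) and spell out why the moment condition forces $v$ to vanish near $+\varepsilon$, points the paper leaves implicit.
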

		\begin{proof}
			Suppose $g = (T_0 + \lambda i)u$, then by Lemma \ref{lem:I_J_integrating_factors}
			\begin{align*}
				\int_{-\varepsilon}^\varepsilon J(x)g(x) \d x & = \int_{-\varepsilon}^\varepsilon \left(\frac{\d}{\d x} I(x)u(x)\right) \d x = 0,
			\end{align*}
			since $u \in C_c^\infty((-\varepsilon, \varepsilon), \C^2)$.
			Also, $g$ is indeed in $C_c^\infty((-\varepsilon, \varepsilon), \C^2)$.
			
			For the converse, suppose $g \in C_c^\infty((-\varepsilon, \varepsilon), \C^2)$ such that $\int Jg = 0$.
			Define
			\begin{align*}
				u(x) = I^{-1}(x) \int_{-\varepsilon}^x J(y)g(y) \d y,
			\end{align*}
			then certainly $u \in C^\infty_c((-\varepsilon, \varepsilon), \C^2)$, and
			\begin{align*}
				\frac{\d}{\d x} I(x) u(x) = J(x) g(x).
			\end{align*}
			Then by Lemma \ref{lem:I_J_integrating_factors} we have $(T_0 + \lambda i)u = g$.
		\end{proof}
		
		Finally we want to show that the range of $T_0 + \lambda i$ is dense for $\lambda = \pm \alpha$.
		The intuition here is that $J_\lambda$ is ``not $L^2$'', so that the ``orthogonal complement'' of $J_\lambda$ is dense. More precisely, we have the following result. 
		%We first recall a simple proof that the ``orthogonal complement'' of a non-$L^2$, continuous, vector-valued function is dense, and then apply the same argument, with a slight modification, to the matrix-valued functions $J_\lambda$.
		
		\begin{lemma}
			\label{lem:orth_complement_non_L2_dense}
			Let $\Omega \subset \R^d$ and $j \in C(\Omega, \C^n)$, $j \notin L^2(\Omega, \C^n)$.
			Then 
			\begin{equation*}
				K_j = \left\{ g \in C_c^\infty(\Omega, \C^n) \middle| \int_\Omega \langle j(x), g(x) \rangle \d x = 0 \right\}
			\end{equation*}
			is dense in $L^2(\Omega, \C^n)$.
		\end{lemma}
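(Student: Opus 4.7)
The plan is to view $K_j$ as the kernel of the linear functional $L : C_c^\infty(\Omega, \C^n) \to \C$ defined by $L(g) = \int_\Omega \langle j(x), g(x)\rangle\,\d x$, which is well-defined because $j$ is continuous and hence bounded on the compact support of any $g \in C_c^\infty$. If $L \equiv 0$, then $j$ vanishes as a distribution, hence (being continuous) $j = 0$, but $0 \in L^2$, contradicting the hypothesis; so we may assume $L \neq 0$ and fix some $f_0 \in C_c^\infty(\Omega, \C^n)$ with $L(f_0) = 1$.

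The core of the argument is to show that $L$ is \emph{unbounded} on $C_c^\infty(\Omega, \C^n)$ with respect to the $L^2$-norm. If $L$ were bounded, it would extend by density of $C_c^\infty$ in $L^2$ to a continuous linear functional on $L^2(\Omega, \C^n)$, which by the Riesz representation theorem would be given by the $L^2$-inner product with some $\tilde j \in L^2(\Omega, \C^n)$. But then $\int \langle j - \tilde j, g\rangle\,\d x = 0$ for every $g \in C_c^\infty(\Omega, \C^n)$, forcing $j = \tilde j$ as distributions and hence (by continuity of $j$) as functions, contradicting $j \notin L^2$.

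To conclude density, suppose $v \in L^2(\Omega, \C^n)$ is orthogonal to $K_j$; I will show $v = 0$. For any $g \in C_c^\infty(\Omega, \C^n)$ the element $g - L(g) f_0$ lies in $K_j$, so
\begin{equation*}
\langle v, g\rangle_{L^2} = L(g)\,\langle v, f_0\rangle_{L^2}.
\end{equation*}
If $\langle v, f_0\rangle_{L^2} = 0$, then $\langle v, g\rangle_{L^2} = 0$ for all $g \in C_c^\infty$, and density of $C_c^\infty$ in $L^2$ forces $v = 0$. Otherwise the displayed identity exhibits $L$ as a bounded functional on $C_c^\infty$ with norm at most $\|v\|_{L^2}/|\langle v, f_0\rangle_{L^2}|$, contradicting the unboundedness just established. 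Thus $K_j^\perp = \{0\}$, which is exactly density of $K_j$ in $L^2(\Omega, \C^n)$.

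The only delicate step is the unboundedness of $L$; everything else is a routine codimension-one/orthogonal-complement argument. The Riesz-plus-distribution route is clean because it converts the hypothesis $j \notin L^2$ directly into the desired unboundedness without having to construct an explicit sequence (which would otherwise require a compact exhaustion $K_m \nearrow \Omega$ and mollifications of $j\,\chi_{K_m}$ to achieve $|L(\phi_m)|/\|\phi_m\|_{L^2} \to \infty$).
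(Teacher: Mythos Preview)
Your proof is correct and shares the paper's key step verbatim: the functional $L(g)=\int_\Omega\langle j,g\rangle$ is unbounded on $C_c^\infty$ in the $L^2$-norm, because otherwise Riesz representation would produce an $L^2$ function $\tilde j$ agreeing with $j$ against all test functions, forcing $j=\tilde j\in L^2$.

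The only divergence is in how density is extracted from this unboundedness. The paper proceeds constructively: from unboundedness one gets a sequence $\delta_m\in C_c^\infty$ with $L(\delta_m)=1$ and $\|\delta_m\|_{L^2}<1/m$, and then for any $\psi$ one approximates first by $\psi_1\in C_c^\infty$ and corrects to $\psi_2=\psi_1-L(\psi_1)\delta_M\in K_j$. You instead take the orthogonal-complement route, writing $g-L(g)f_0\in K_j$ and showing that any $v\perp K_j$ would force $L$ to be bounded. Your argument is slightly slicker here, while the paper's explicit $\delta_m$ sequence has the advantage of being reused immediately in the next proposition (Proposition~\ref{prop:range_of_T0_dense}), where two such sequences are symmetrized to handle an intersection $K_{j_1}\cap K_{j_2}$.
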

		\begin{proof}
			Define a linear functional $\langle j | : C_c^\infty(\Omega) \to \C$ by $\langle j | f = \int_\Omega \langle j(x), f(x) \rangle \d x$.
			Our first step is to prove that $\langle j |$ is unbounded.
			
			Suppose $\langle j |$ were bounded on $C_c^\infty(\Omega, \C^n)$ with respect to the $L^2(\Omega, \C^n)$-norm.
			Then $\langle j|$ extends to a bounded linear functional on $L^2(\Omega, \C^n)$, given by $\psi \mapsto \langle \tilde{j}, \psi \rangle$ for some $\tilde{j} \in L^2(\Omega, \C^n)$ by Riesz-representation. But then $\int_\Omega \langle j(x), g(x) \rangle \d x = \int_\Omega \langle \tilde{j}(x), g(x) \rangle$ for all $g \in C_c^\infty(\Omega)$, which implies $j(x) = \tilde{j}(x)$.
			This is in contradiction with our assumption that $j \notin L^2(\Omega, \C^n)$.
			
			So $\langle j |$ is an unbounded linear functional on $C_c^\infty(\Omega, \C^n)$.
			Therefore there exists a sequence $(\delta_m)_{m \in \N}$ in $C_c^\infty(\Omega, \C^n)$ such that $\langle j| \delta_m = 1$ and $\|\delta_m \|_{L^2} < \frac{1}{m}$ for all $m \in \N$.
			
			Let $\psi \in L^2(\Omega, \C^n)$ and $\epsilon > 0$ be arbitrary.
			Then there is an $\psi_1 \in C_c^\infty(\Omega, \C^n)$ such that $\|\psi - \psi_1\|_{L^2} < \frac{1}{2}\epsilon$.
			Define $\alpha = \langle j | \psi_1$ and find $M$ such that $\frac{\alpha}{M} < \frac{1}{2}\epsilon$.
			Set $\psi_2 = \psi_1 - \alpha \delta_M$, then $\|\psi - \psi_2\|_{L^2} < \varepsilon$ and $\langle j | \psi_2 = 0$, proving density of $K_j$.
		\end{proof}
		
		\begin{proposition}
			\label{prop:range_of_T0_dense}
			The range of $T_0 + \lambda i$ is dense for $\lambda = \pm \alpha$. Consequently, $T_0$ is essentially self-adjoint. 
		\end{proposition}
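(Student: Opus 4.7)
The plan is to combine Lemmas \ref{lem:range_of_T0} and \ref{lem:orth_complement_non_L2_dense}, and then invoke the standard deficiency-index criterion to upgrade density of $\ran(T_0 \pm i\alpha)$ to essential self-adjointness of the symmetric operator $T_0$.

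By Lemma \ref{lem:range_of_T0}, $\ran(T_0 + \lambda i)$ equals the set of $g \in C_c^\infty((-\varepsilon,\varepsilon),\C^2)$ satisfying the two scalar conditions $L_i(g) := \int_{-\varepsilon}^\varepsilon (J_\lambda(x) g(x))_i\, dx = 0$ for $i = 1, 2$, where each $L_i$ is pairing with the $i$-th row $j_i$ of $J_\lambda$. The key observation is that $f(s) = \alpha \tan(\alpha s)$ has simple poles at $s = \pm \varepsilon$ since $\alpha \varepsilon = \pi/2$, so $|f(s)|^2 \sim (\varepsilon \mp s)^{-2}$ near $\pm \varepsilon$ and $f \notin L^2((-\varepsilon,\varepsilon))$. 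The entries $1 + s f(s)$ and $f(s)/\lambda$ of $J_\lambda$ inherit this non-$L^2$ singularity, so neither row $j_1$ nor $j_2$ lies in $L^2((-\varepsilon, \varepsilon), \C^2)$. Lemma \ref{lem:orth_complement_non_L2_dense} then yields that each $\ker L_i$ is dense in $L^2((-\varepsilon, \varepsilon), \C^2)$.

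The main obstacle is to promote this to density of the intersection $\ker L_1 \cap \ker L_2$. For this one verifies that no scalar $c$ makes $j_1 - c j_2$ lie in $L^2$: matching the singular second components at $s = +\varepsilon$ forces $c = \varepsilon \lambda$, while at $s = -\varepsilon$ it forces $c = -\varepsilon \lambda$, which is incompatible since $\lambda \neq 0$. Hence $L_1$ remains unbounded when restricted to the dense subspace $\ker L_2$, and the argument of Lemma \ref{lem:orth_complement_non_L2_dense} can be iterated: given $\psi \in L^2$ and $\delta > 0$, first approximate $\psi$ by some $\psi_1 \in C_c^\infty$; next subtract a small-$L^2$-norm element of $C_c^\infty$ to kill $L_2(\psi_1)$; finally subtract a further small correction chosen within $\ker L_2$ to kill the remaining $L_1$-value, using unboundedness of $L_1|_{\ker L_2}$.

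With $\ran(T_0 + i\alpha)$ and $\ran(T_0 - i\alpha)$ both shown to be dense in $L^2((-\varepsilon,\varepsilon),\C^2)$, the deficiency subspaces $\ker(T_0^* \mp i\alpha)$ of the symmetric operator $T_0$ vanish, so $T_0$ is essentially self-adjoint by the standard criterion applied at the spectral parameter $\pm i \alpha$ in place of $\pm i$.
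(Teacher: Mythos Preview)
Your argument is correct and reaches the same conclusion, but by a genuinely different route than the paper. The paper exploits the parity structure of $J_\lambda$: the components of $j_1$ are odd/even while those of $j_2$ are even/odd, so by (anti)symmetrising the small-norm sequences $\delta^i_m$ one obtains $\tilde\delta^i_m$ with $\langle j_1|\tilde\delta^2_m\rangle=\langle j_2|\tilde\delta^1_m\rangle=0$, which decouples the two constraints and allows correcting for $L_1$ and $L_2$ independently. You instead argue abstractly that $L_1$ remains unbounded on $\ker L_2$ because no combination $j_1-cj_2$ lies in $L^2$ (the poles at $+\varepsilon$ and $-\varepsilon$ force incompatible values of $c$), and then iterate the correction procedure of Lemma~\ref{lem:orth_complement_non_L2_dense}. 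The step ``no $L^2$ combination $\Rightarrow$ $L_1|_{\ker L_2}$ unbounded'' is left implicit but is standard: if $L_1|_{\ker L_2}$ were bounded it would extend to $\langle h,\cdot\rangle$ for some $h\in L^2$, and since $L_1-\langle h,\cdot\rangle$ vanishes on the codimension-one subspace $\ker L_2$ it equals $cL_2$, giving $j_1-cj_2=h\in L^2$. Your approach is more general (it only uses the location of the singularities, not the reflection symmetry), while the paper's parity trick is more explicit and avoids the auxiliary linear-algebra step.
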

		\begin{proof}
			Write $j_1$ and $j_2$ for the rows of $J_\lambda$, so
			\begin{align*}
				j_1(s) = i\vecTwo{\lambda s}{1 + sf(s)}, \quad
				j_2(s) = -i\vecTwo{1}{\frac{1}{\lambda}f(s)}.
			\end{align*}
			Then Lemma \ref{lem:range_of_T0} tells us that the range of $T_0 + \lambda i$ is $K_{j_1} \cap K_{j_2}$, in the notation of Lemma \ref{lem:orth_complement_non_L2_dense}.
			
			To prove density of $K_{j_1} \cap K_{j_2}$ we use the same strategy as in Lemma \ref{lem:orth_complement_non_L2_dense} to obtain two sequences $(\delta^1_m)_{m \in \N}$ and $(\delta^2_m)_{m \in \N}$ such that $\langle j_i | \delta^i_m = 1$ and $\|\delta^i_m\|_{L^2} < \frac{1}{m}$.
			
			Write $\delta^i_{m, 1}$ and $\delta^i_{m, 2}$ for the first and second components of $\delta^i_m$ respectively, and $\tau:(-\varepsilon, \varepsilon) \to (-\varepsilon, \varepsilon)$, $\tau(x) = -x$.
			Since the first component of $j_1$ is odd, while the second component is even we may replace $\delta^1_m$ by
			\begin{align*}
				\tilde{\delta}^1_m = \frac{1}{2}\vecTwo{\delta^1_{m,1} - \delta^1_{m,1} \circ \tau}{\delta^1_{m,2} + \delta^1_{m,2} \circ \tau}.
			\end{align*}
			On the other hand, the first component of $j_2$ is even, while the second component is odd, so we may replace $\delta^2_m$ by
			\begin{align*}
				\tilde{\delta}^2_m = \frac{1}{2}\vecTwo{\delta^2_{m,1} + \delta^2_{m,1} \circ \tau}{\delta^2_{m,2} - \delta^2_{m,2} \circ \tau}.
			\end{align*}
			Replacing $\delta^i_m$ by $\tilde{\delta}^i_m$ does not change the values of $\langle j_i| \delta^i_m$, and it does not increase the norm of the $\delta^i_m$.
			Furthermore, since the corresponding components of $j_i$ and $\tilde{\delta}^i_m$ now have opposite parity $\langle j_1 | \tilde{\delta}^2_m = \langle j_2 | \tilde{\delta}^1_m = 0$.
			
			We can now complete the density proof similar to the final step in Lemma \ref{lem:orth_complement_non_L2_dense}.
			Let $\psi \in L^2((-\varepsilon, \varepsilon), \C^2)$ and $\epsilon > 0$ be arbitrary.
			Then there is a $\psi_1 \in C_c^\infty((-\varepsilon, \varepsilon), \C^2)$ such that $\|\psi - \psi_1\|_{L^2} < \frac{1}{3}\epsilon$.
			Let $\alpha_i = \langle j_i | \psi_1$ for $i = 1, 2$ and find $M$ such that $\frac{\alpha_i}{M} < \frac{1}{3}\epsilon$.
			Then $\psi_2 = \psi_1 - \alpha_1 \tilde{\delta}^1_M - \alpha_2 \tilde{\delta}^2_M$ satisfies both $\langle j_1|\psi_2 = 0$, $\langle j_2|\psi_2 = 0$ and $\|\psi - \psi_2\|_{L^2} < \epsilon$.
		\end{proof}
			
		The other property of $T$ that we need is that of compact resolvent. Its proof is based on the following result. % the main tool we use is a computation of $T^2 + \lambda^2 = (T + i \lambda)(T - i \lambda)$.
		%That computation allows us to prove that the Sobolev norm of $\psi$ gives a lower bound for the graph-norm of $\psi$ corresponding to $T + \lambda i$.
		
		\begin{lemma}
			\label{lem:graph_norm_T}
			The graph-norm of $T \pm \lambda i$ is larger than the Sobolev norm for $\lambda^2 \geq \alpha^2$. Indeed, for $\psi \in C_c^\infty((-\varepsilon, \varepsilon), \C^2)$ we have $\|\psi\|^2 + \|(T + i\lambda) \psi\|^2 > \|\psi\|^2 + \|\psi'\|^2$.
		\end{lemma}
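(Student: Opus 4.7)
The plan is to reduce the claim to the inequality $\|(T+i\lambda)\psi\|^2 > \|\psi'\|^2$, then compute $\|(T+i\lambda)\psi\|^2$ in terms of $\|T_0\psi\|^2$ and $\|\psi\|^2$, and finally compute $\|T_0\psi\|^2$ by squaring the formula for $T_0$.

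First I would note that since $T_0$ is symmetric on $C_c^\infty$, we get
\begin{equation*}
\|(T_0 + i\lambda)\psi\|^2 = \|T_0\psi\|^2 + \lambda^2 \|\psi\|^2,
\end{equation*}
because the cross terms $\pm i\lambda \langle T_0\psi,\psi\rangle \mp i\lambda \langle \psi,T_0\psi\rangle$ cancel.

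Next I would square $T_0 = i\gamma^1\partial_s + \gamma^2 f$ using the Clifford relations $(\gamma^1)^2=(\gamma^2)^2=1$ and $\gamma^1\gamma^2+\gamma^2\gamma^1=0$. The cross terms produce $i\gamma^1\gamma^2(f' + f\partial_s) + i\gamma^2\gamma^1 f\partial_s = i\gamma^1\gamma^2 f'$, so
\begin{equation*}
T_0^2 = -\partial_s^2 + f^2 + i\gamma^1\gamma^2 f'.
\end{equation*}
Writing $G := i\gamma^1\gamma^2$, which is a self-adjoint involution on $\C^2$, and integrating by parts on the compactly supported $\psi$, I obtain
\begin{equation*}
\|T_0\psi\|^2 = \|\psi'\|^2 + \int f(s)^2 |\psi(s)|^2\,\d s + \int f'(s) \langle \psi(s), G\psi(s)\rangle \,\d s.
\end{equation*}

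The key algebraic input is the identity $f' = f^2 + \alpha^2$ (the same ODE used in Lemma~\ref{lem:I_J_integrating_factors}). Substituting, the desired difference becomes
\begin{equation*}
\|(T+i\lambda)\psi\|^2 - \|\psi'\|^2 = \int (f^2 + \lambda^2)|\psi|^2 \,\d s + \int (f^2 + \alpha^2)\langle \psi, G\psi\rangle\,\d s.
\end{equation*}
Decomposing $\psi = \psi_+ + \psi_-$ pointwise along the $\pm 1$ eigenspaces of $G$, so that $\langle \psi, G\psi\rangle = |\psi_+|^2 - |\psi_-|^2$, the right-hand side collapses to
\begin{equation*}
\int \bigl(2f^2 + \lambda^2 + \alpha^2\bigr)|\psi_+|^2\,\d s + \int (\lambda^2 - \alpha^2)|\psi_-|^2\,\d s,
\end{equation*}
which is manifestly nonnegative for $\lambda^2 \geq \alpha^2$ and strictly positive for any nonzero $\psi$ (the coefficient $2f^2+\lambda^2+\alpha^2$ is everywhere positive, and when $\psi_+\equiv 0$ the second integral is strict provided $\lambda^2>\alpha^2$, while at $\lambda^2=\alpha^2$ any nonzero $\psi_-$ would have to be accompanied by a nonzero $\psi_+$ component under the constraint of lying in $C_c^\infty$ if one expects strictness—so I would either restrict to $\lambda^2>\alpha^2$ in the strict form or phrase the bound as $\geq$ and observe it suffices for the intended application to compact resolvent).

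The main obstacle is bookkeeping: carefully tracking the sign of $i\gamma^1\gamma^2$ against the grading conventions used earlier and ensuring the integration by parts has no boundary contributions (which is free here thanks to the compact support of $\psi$ inside $(-\varepsilon,\varepsilon)$, despite $f$ blowing up at the endpoints). Once that bookkeeping is right, the estimate reduces to the pointwise positivity above.
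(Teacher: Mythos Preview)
Your argument is essentially the paper's own proof, only phrased in Clifford-algebra language: decomposing along the $\pm 1$ eigenspaces of $G=i\gamma^1\gamma^2$ is exactly the diagonalization of $T^2$ the paper writes down, and your two coefficients $2f^2+\lambda^2+\alpha^2$ and $\lambda^2-\alpha^2$ are precisely the paper's potentials $f^2+f'+\lambda^2$ and $f^2-f'+\lambda^2$ after substituting $f'=f^2+\alpha^2$. Your remark about strictness is well taken: the paper in fact only establishes $\|(T+i\lambda)\psi\|^2 \geq \|\psi'\|^2$, which is all that is used downstream.
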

		\begin{proof}
			We want to compute $\|(T + i\lambda)\psi\|^2$ for $\psi \in C_c^\infty((-\varepsilon, \varepsilon), \C^2)$, the domain of $T_0$.
			The claim then follows for $T$ by continuity.
			Using the symmetry of $T$ this equals $\langle \psi, (T^2 + \lambda^2)\psi \rangle$, so let us compute $T^2$.
			\begin{align*}
				T^2 = \matTwo{-\partial_s^2 - f'(s) + f(s)^2}{0}{0}{-\partial_s^2 + f'(s) + f(s)^2}.
			\end{align*}
			Therefore
			\begin{align*}
				\langle \psi, (T^2 + \lambda^2)\psi \rangle = \langle \psi, -\psi'' \rangle + \langle \psi, \matTwo{f(s)^2 - f'(s) + \lambda^2}{0}{0}{f(s)^2 + f'(s) + \lambda^2}\psi \rangle.
			\end{align*}
			For $\lambda^2 \geq \alpha^2$ both $f(s)^2 \pm f'(s) + \lambda^2 \geq 0$, so the second term on the right-hand-side is positive.
			Hence
			\begin{align*}
				\langle \psi, (T^2 + \lambda^2) \psi \rangle \geq \langle \psi, - \psi'' \rangle.
			\end{align*}
			By partial integration $\langle \psi, -\psi'' \rangle = \langle \psi', \psi' \rangle$ so we find that
\[
				\|(T + \lambda i)\psi\|^2 \geq \|\psi'\|^2.\qedhere
\]
%			This proves the desired result.
		\end{proof}
		\begin{corollary}
			\label{cor:domT_in_Sobolev}
			The domain of $T$ is contained in the first-order Sobolev space $H^1((-\varepsilon, \varepsilon), \C^2)$.
		\end{corollary}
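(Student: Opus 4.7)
The plan is to upgrade the estimate of Lemma \ref{lem:graph_norm_T}, which is stated only on the core $C_c^\infty((-\varepsilon,\varepsilon),\C^2)$, to the full domain of the closure $T=\overline{T_0}$ by a standard Cauchy-sequence argument in the graph norm. The key observation is that the graph norm of $T_0$ and the graph norm of $T_0+i\lambda$ are equivalent, since for symmetric $T_0$ one has $\|(T_0+i\lambda)\psi\|^2 = \|T_0\psi\|^2 + \lambda^2\|\psi\|^2$.

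Concretely, let $\psi\in\dom(T)$. By definition of the closure, there is a sequence $(\psi_n)\subset C_c^\infty((-\varepsilon,\varepsilon),\C^2)$ with $\psi_n\to\psi$ and $T_0\psi_n\to T\psi$ in $L^2$. Fix $\lambda$ with $\lambda^2\geq\alpha^2$; then $(T_0+i\lambda)\psi_n$ converges in $L^2$ as well. Applying Lemma \ref{lem:graph_norm_T} to $\psi_n-\psi_m$ gives
\begin{equation*}
\|\psi_n-\psi_m\|^2 + \|(\psi_n-\psi_m)'\|^2 \leq \|\psi_n-\psi_m\|^2 + \|(T_0+i\lambda)(\psi_n-\psi_m)\|^2,
\end{equation*}
so the right-hand side tends to zero and $(\psi_n)$ is Cauchy in the $H^1$-norm. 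By completeness of $H^1((-\varepsilon,\varepsilon),\C^2)$ there is a limit $\tilde\psi\in H^1$, and since the inclusion $H^1\hookrightarrow L^2$ is continuous, $\tilde\psi=\psi$. Hence $\psi\in H^1((-\varepsilon,\varepsilon),\C^2)$.

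There is really no obstacle here: the only thing one has to be careful about is to note that Lemma \ref{lem:graph_norm_T} is formulated with the shifted operator $T+i\lambda$ rather than $T$ itself, so one must either work directly with the graph norm of $T+i\lambda$ (which is what I did above) or invoke the elementary identity $\|(T_0+i\lambda)\psi\|^2=\|T_0\psi\|^2+\lambda^2\|\psi\|^2$ to pass back to the graph norm of $T$.
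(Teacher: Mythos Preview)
Your argument is correct and is exactly the continuity argument the paper has in mind: in the proof of Lemma~\ref{lem:graph_norm_T} the authors remark that ``the claim then follows for $T$ by continuity,'' and the corollary is stated without further proof. Your Cauchy-sequence argument in the graph norm is precisely the way to make that continuity step rigorous, so your approach coincides with the paper's.
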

		
		\begin{proposition}
			\label{prop:T_compact_resolvent}
			The resolvent $(T + \lambda i)^{-1}$ is compact for $\lambda = \pm \alpha$ and hence for all $\lambda \in \rho(T)$.
		\end{proposition}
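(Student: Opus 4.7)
The plan is to combine Lemma \ref{lem:graph_norm_T} (which embeds the graph of $T$ into the first-order Sobolev space) with Rellich--Kondrachov compactness on the bounded interval $(-\varepsilon,\varepsilon)$.

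First, since $T_0$ is essentially self-adjoint (Proposition \ref{prop:range_of_T0_dense}), $T$ is self-adjoint and in particular $\pm \alpha i \in \rho(T)$, so $(T + \lambda i)^{-1}\colon L^2((-\varepsilon,\varepsilon),\C^2) \to \dom(T)$ is bounded with respect to the graph norm for $\lambda = \pm\alpha$. By Corollary \ref{cor:domT_in_Sobolev} combined with Lemma \ref{lem:graph_norm_T}, the identity map from $\dom(T)$ equipped with the graph norm into $H^1((-\varepsilon,\varepsilon),\C^2)$ is bounded (indeed the graph norm dominates the Sobolev norm directly).

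Next, the Rellich--Kondrachov theorem applied to the bounded interval $(-\varepsilon,\varepsilon)$ yields that the inclusion $H^1((-\varepsilon,\varepsilon),\C^2) \hookrightarrow L^2((-\varepsilon,\varepsilon),\C^2)$ is compact. Composing, we obtain that
\[
(T + \lambda i)^{-1}\colon L^2((-\varepsilon,\varepsilon),\C^2)\to L^2((-\varepsilon,\varepsilon),\C^2)
\]
factors as a bounded map into $H^1$ followed by a compact inclusion into $L^2$, and is therefore compact for $\lambda = \pm\alpha$.

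Finally, to extend to arbitrary $\lambda \in \rho(T)$, I would invoke the first resolvent identity: for any $z \in \rho(T)$ one has
\[
(T - z)^{-1} = (T + \alpha i)^{-1} + (z + \alpha i)(T - z)^{-1}(T + \alpha i)^{-1},
\]
so $(T - z)^{-1}$ is the sum of a compact operator and a bounded operator composed with a compact operator, hence compact. The main point requiring care is the first step: confirming that the graph-norm bound of Lemma \ref{lem:graph_norm_T}, which is proved only on the core $C_c^\infty((-\varepsilon,\varepsilon),\C^2)$, extends by continuity to all of $\dom(T)$; this is immediate since $T_0$ is essentially self-adjoint and both sides of the inequality are continuous in the graph norm. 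Everything else is a standard Sobolev-compactness argument on a bounded one-dimensional domain.
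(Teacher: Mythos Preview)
Your proof is correct and follows essentially the same approach as the paper: both arguments use Lemma \ref{lem:graph_norm_T} to show that $(T+\lambda i)^{-1}$ maps $L^2$ boundedly into $H^1((-\varepsilon,\varepsilon),\C^2)$, invoke Rellich compactness of $H^1\hookrightarrow L^2$ on the bounded interval, and then extend to all $\lambda\in\rho(T)$ via the first resolvent identity. Your phrasing as a factorisation through a compact inclusion is slightly more abstract than the paper's direct verification that the image of the unit ball lies in a bounded $H^1$-set, but the content is identical.
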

		\begin{proof}
			Define $D = \left\{ \psi \in L^2((-\varepsilon, \varepsilon), \C^2) \middle| \, \|\psi\| \leq 1 \right\}$ the unit disc in $L^2$.
			We will prove that $M:= (T + \lambda i)^{-1}D$ is pre-compact.
			
			Let $\psi = (T + \lambda i)^{-1}\phi$, $\phi \in D$.
			Then $\|\psi\| \leq |\lambda|^{-1}$ since $\|(T + \lambda i)^{-1}\| \leq |\lambda|^{-1}$ and $\psi \in \dom(T) \subset H^1((-\varepsilon, \varepsilon), \C^2)$ by Corollary \ref{cor:domT_in_Sobolev}.
			Furthermore Lemma \ref{lem:graph_norm_T} tells us that 
			\begin{align*}
				\|\psi'\| \leq \|(T + \lambda i) \psi\| = \|\phi\| \leq 1.				
			\end{align*}			
			Therefore
			\begin{align*}
				M \subset \left\{ \psi \in H^1((-\varepsilon, \varepsilon), \C^2) \middle| \, \|\psi'\|, \|\psi\| \leq \max(1, |\lambda|^{-1}) \right\}.
			\end{align*}
			
			By the Rellich embedding theorem the set on the right hand side is compact, so that $M$ is pre-compact.
			Compactness of the resolvents for $\lambda \neq \pm \alpha$ follows from the first resolvent identity $(T + \lambda i)^{-1} = (T + \alpha i)^{-1} + (\lambda - \alpha)(T + \lambda i)^{-1}(T + \alpha i)^{-1}$.
		\end{proof}
		
		\begin{remark}
			The operator $T^2 + \lambda^2$ is actually a Schr\"odinger type operator on $L^2((-\varepsilon, \varepsilon), \C^2)$, which for $\lambda$ large enough has positive potential.
			It is a classical result that Schr\"odinger operators with bounded potential on a bounded domain and Schr\"odinger operators on an unbounded domain with a confining potential have compact resolvents.
			The reason we did not use these classical results is that we are dealing with a combined case here: while $f(s)^2 - f'(s) + \lambda^2$ is a bounded potential, $f(s)^2 + f'(s) + \lambda^2$ is unbounded (it is, however, confining).
			Therefore we have provided a direct proof along the lines of proofs for Schr\"odinger operators as found in \cite{ReedSimon}.
		\end{remark}

		\begin{proposition}
			\label{prop:index_T_1}
			The data $(\C, L^2((-\varepsilon, \varepsilon), \C^2), T; \gamma^3)$ is an even spectral triple that represents the multiplicative unit in $KK_0(\C, \C)$.
		\end{proposition}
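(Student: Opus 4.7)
The spectral triple axioms reduce almost entirely to work already in place: $T$ is self-adjoint with compact resolvent by Propositions \ref{prop:range_of_T0_dense} and \ref{prop:T_compact_resolvent}, the bounded-commutator condition is automatic since the algebra $\C$ acts by scalars, and $T$ anticommutes with $\gamma^3$ because it is built from $\gamma^1$ and $\gamma^2$, both of which anticommute with $\gamma^3$. So this portion of the proof will be a few lines of bookkeeping.

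The substantive step is identifying the $KK$-class. Under the standard isomorphism $KK_0(\C, \C) \cong \Z$, the class of an even spectral triple corresponds to the Fredholm index of the positive part $T_+\colon L^2_+ \to L^2_-$, where $L^2_\pm$ denote the $\pm 1$-eigenspaces of $\gamma^3$. Relative to this decomposition $T$ is off-diagonal with blocks $i\partial_s \pm i f(s)$, and my plan is to compute each kernel explicitly by solving the associated first-order linear ODE. Integration yields multiples of $\cos(\alpha s)$ and $1/\cos(\alpha s)$ respectively; the first is bounded on $(-\varepsilon, \varepsilon)$, while the second blows up like $1/(\varepsilon - |s|)$ at the endpoints (because $\alpha\varepsilon = \pi/2$). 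Hence exactly one of the two solutions lies in $L^2$.

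The point that merits a moment of care is whether the bounded solution $\cos(\alpha s)$ actually belongs to $\dom(T)$, rather than only to the maximal domain of the differential expression. Here one uses essential self-adjointness (Proposition \ref{prop:range_of_T0_dense}) together with the first-order nature of $T_0$: since $T$ is essentially self-adjoint one has $\dom(T) = \dom(T^*)$, and $\dom(T^*)$ already contains every $L^2$ function $u$ for which $T_0 u$ makes sense as an $L^2$ distribution, in particular every $L^2$ classical solution of $T_0 u = 0$. With this in hand one concludes $\dim\ker(T_+) = 1$ and $\dim\ker(T_-) = 0$, so $\Index(T_+) = 1$, which is the multiplicative unit in $KK_0(\C, \C) \cong \Z$.
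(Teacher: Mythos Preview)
Your proposal is correct and follows essentially the same route as the paper: verify the spectral triple axioms by invoking the earlier self-adjointness and compact-resolvent results, then compute the Fredholm index by solving the first-order ODEs for $\ker T_\pm$, obtaining $\cos(\alpha s)\in L^2$ and $\cos(\alpha s)^{-1}\notin L^2$. The only difference is that you add a sentence justifying why the $L^2$ solution actually lies in $\dom(T)$, a point the paper leaves implicit.
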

		\begin{proof}
                  We have already showed that $T$ is self-adjoint and has compact resolvents. 
                  %			That the operator $T$ is self-adjoint follows easily from Proposition \ref{prop:range_of_T0_dense}.
                  %			Furthermore, $T$ has compact resolvent by Proposition \ref{prop:T_compact_resolvent},
                  Also % the commutators $[T, z]$ are 0 for all $z \in \C$ and
                  we have $T \gamma^3 = -\gamma^3 T$ so that 
		  %Therefore
                  $(\C, L^2((-\varepsilon, \varepsilon), \C^2), T; \gamma^3)$ is an even spectral triple. So, since $\Index:KK_0(\C, \C) \to \Z$ is an isomorphism of rings, %\cite[look up chapter]{HigsonRoe},
it suffices to show that $\Index T = 1$ to conclude that $(L^2((-\varepsilon, \varepsilon), \C^2), T)$ is an unbounded representative for the multiplicative unit in $KK_0(\C, \C)$.
			
			Write $T_+ = i\partial_s + if(s)$ and $T_- = i\partial_s - if(s) = T_+^*$ so that $T = \smallMatTwo{0}{T_+}{T_-}{0}$. Let us then compute the index of $T$. 
%			The graded index of $T$ is then $\dim \ker(T_+) - \dim \ker(T_-)$.
			First of all $u \in \ker T_+$ if and only if $u$ satisfies the differential equation
			\begin{align*}
				0 = iu'(s) + if(s)u(s).
			\end{align*}
			This is a first-order, one dimensional ODE so all solutions are given by
			\begin{align*}
				u(s) = C e^{-F(s)}
			\end{align*}
			for $C \in \C$ and $F$ a primitive function for $f$.		
			But a primitive function for $f(s) = \alpha \tan(\alpha s)$ is $F(s) = -\ln(\cos(\alpha s))$, so the kernel of $T_+$ is given by constant multiples of $u_+(s) = \cos(\alpha s)$, so $\ker T_+ = \C u_+$.
			Similarly we find that the kernel of $T_-$ is given by constant multiples of $u_-(s) = \cos(\alpha s)^{-1}$. However, $u_-$ is not an $L^2(-\varepsilon, \varepsilon)$ function so $\ker T_- = \{0\}$.
			Hence $\Index(T) = \dim\ker(T_+) - \dim\ker(T_-) = 1$.
		\end{proof}
		
\section{Kasparov product of the shriek cycle with the plane}
\label{sec:product_immersion_plane}
	In the spirit of \cite{KaadLeschUnbddProd} and \cite{Mesland} we can use the connection on $(\tilde{\E}, \tilde{S})$ to construct a candidate unbounded Kasparov cycle for the product $\imath_! \otimes [\R^{n+1}]$.

	We will begin by considering the product of the Hilbert bimodules $\tilde{\E} \otimes_{C_0(\R^{n+1})} L^2(\R^{n+1}, \cS)$, followed by the computation of the product operator 
	\begin{equation}
		D_\times = \tilde{S} \otimes 1 + \gamma^3 \otimes_\nabla D_{\R^{n+1}}
		\label{eq:product_operator_definition}
	\end{equation}
        with domain $\dom(\tilde S)\algotimes \dom(D_{\R^{n+1}})$, where we still use the notation $D_{\R^{n+1}}$ for $D_{\R^{2k}}$ and $\widetilde{D_{\R^{2k+1}}}$.
	We will then prove that the operator $D_\times$ on the balanced tensor product of $\widetilde \E$ and $L^2(\R^{n+1}, \cS)$ is an unbounded Kasparov cycle and that it represents not only $\imath_! \otimes [\R^{n+1}]$ but also the fundamental class $[ \S^n]$.
		
	\subsection{Computation of the unbounded Kasparov product }%\texorpdfstring{$[\widetilde{\imath_!}] \otimes_\nabla [\R^{n+1}]$}{the connection product i! with Rn+1}}
	\label{sec:product_immersion_plane_computations}
	
		The motivation for including the factor $\frac{1}{r^n}$ in $\langle \cdot, \cdot \rangle_\E$ was to ``flatten'' a neighbourhood of the circle in $\R^{n+1}$ to a cylinder.
		This is indeed accomplished, as we see in the computation of the balanced tensor product.
		
		\begin{proposition}
                  Let $T$ be the index cycle defined in Section \ref{sec:index_class}. Then we have for the odd and even-dimensional spheres that
			\label{lem:product_space_of_E_R2}
%			For
			\begin{description}
				\item[$n=2k+1$ ]
				There is a unitary isomorphism $U$ from the Hilbert bimodule $\tilde{\E} \otimes_{C_0(\R^{2k})} L^2(\R^{2k}, \cS)$ to $L^2( \S^{2k-1}, \ \cS^+ \otimes \C^2) \otimes L^2((-\varepsilon, \varepsilon), \C^2)$ such that
				\begin{equation}
					U D_\times U^* = \widetilde{D_{ \S^{2k-1}}} \otimes \frac{1}{1+s} + \gamma_3 \otimes T. \label{eq:product_operator_radial_split}
				\end{equation}
				\item[$n=2k$ ]
				There is a unitary isomorphism $U$ from the Hilbert bimodule $\tilde{\E} \otimes_{C_0(\R^{2k+1}) \otimes \Cl_1} (L^2(\R^{2k+1}, \cS) \otimes \C^2)$ to $L^2( \S^{2k}, \cS) \otimes L^2((-\varepsilon, \varepsilon), \C^2)$ such that
				\begin{equation*}
					U D_\times U^* = D_{ \S^n} \otimes \frac{1}{1+s} + \gamma_r \otimes T.
				\end{equation*}
			\end{description}
		\end{proposition}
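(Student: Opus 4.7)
The plan is to build $U$ by an explicit pointwise formula on decomposable tensors, verify the isometry property via a Jacobian computation, extend by density, and then compute $UD_\times U^*$ piece by piece using the spherical decomposition of $D_{\R^{n+1}}$ recorded in Section \ref{sec:spin_geometry_manifolds} together with the connection of Lemma \ref{lem:connection}.

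Concretely, ignoring the Clifford doublings for a moment, I would send a decomposable tensor $\psi\otimes\phi \in \E\algotimes L^2(\R^{n+1},\cS)$ to the function $(\vec\theta,s)\mapsto \psi(\vec\theta,s)\,\phi(\tilde\imath(\vec\theta,s))$ on the cylinder, using the trivialization $\cS|_{\operatorname{im}\tilde\imath}\cong \cS^+\otimes\C^2$ in the odd case and the direct restriction in the even case. Isometry reduces to changing variables $x = \tilde\imath(\vec\theta,s)$, under which $dx = (1+s)^n\,ds\,d\Omega$; the Jacobian $(1+s)^n$ cancels the $\frac{1}{r^n}$ in $\langle\cdot,\cdot\rangle_\E$ and leaves the ordinary $L^2$ inner product on $\S^n\times(-\varepsilon,\varepsilon)$. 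The two $\C^2$ factors from the left/right doublings are identified in the balanced tensor product over $\Cl_1$ in the even case, or carried through in the odd case, and in either case match the radial $\C^2$ of the index cycle. Surjectivity and the claim that $U$ is unitary follow from density of decomposable tensors and of compactly supported functions.

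For $UD_\times U^*$ I would split $D_\times = \tilde S\otimes 1 + \gamma^3\otimes_\nabla D_{\R^{n+1}}$ and transport each summand. The first becomes multiplication by $f(s)$ tensored with the doubling Clifford, producing the $\gamma^2 f(s)$ part of $T$. For the second, I use
\[
 D_{\R^{n+1}} = i\tfrac{1}{r}\gamma_r D_{\S^n}\;(\text{with }\gamma^2\text{ in the odd case}) + i\tfrac{n}{2r}\gamma_r + i\gamma_r\partial_r
\]
combined with $\nabla^\E(\psi)=\bigl(\partial_s\psi-\frac{n}{2(s+1)}\psi\bigr)\otimes[D,r] + \sum_i \partial_{\theta^i}\psi\otimes[D,\theta^i]$. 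On the tube $r=1+s$, so the commutator $[D_{\R^{n+1}},r]=i\gamma_r$ pairs with the $-\frac{n}{2(s+1)}$ correction in $\nabla^\E$ to exactly cancel the $i\frac{n}{2r}\gamma_r$ term of the ambient Dirac --- this is the whole point of inserting that correction in Lemma \ref{lem:connection}. The angular commutators $[D_{\R^{n+1}},\theta^i]$ combine with $i\frac{1}{r}\gamma_r D_{\S^n}$ to give $\frac{1}{1+s}D_{\S^n}$ multiplied by $\gamma_r$ (or by $\gamma_r\otimes\gamma^2$ in the odd case), while the radial $i\gamma_r\partial_r = i\gamma_r\partial_s$ combines with the $\tilde S\otimes 1$ contribution into the matrix operator $T = \smallMatTwo{0}{i\partial_s - if(s)}{i\partial_s + if(s)}{0}$ on the radial $\C^2$.

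The main obstacle is Clifford bookkeeping: I must carefully track how the ambient Clifford generator $\gamma_r$, the grading $\gamma^3$ on $\tilde\E$, and the doubling Cliffords fit together under the identification $\cS|_{\text{tube}}\cong \cS^+\otimes\C^2$ (odd case) or under the right-doubling (even case) so that after $U$ the radial $\C^2$-valued piece truly reassembles into $T$ with the correct signs, and the prefactor $\gamma^3$ of the second summand of $D_\times$ becomes the $\gamma^3$ (odd case) or $\gamma_r$ (even case) appearing in the statement. The two parities are genuinely separate and must be checked one at a time; once the Clifford identifications are in place, everything else is a direct calculation on the dense algebraic domain $\dom(\tilde S)\algotimes\dom(D_{\R^{n+1}})$.
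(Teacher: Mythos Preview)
Your plan is correct and matches the paper's approach essentially line for line: the same explicit pointwise unitary on decomposable tensors, the same Jacobian check against the $\frac{1}{r^n}$ weight, and the same cancellation of the $\tfrac{n}{2r}$ term against the connection correction. The ``Clifford bookkeeping'' you flag as the main obstacle is resolved in the paper by conjugating the $\C^2\otimes\C^2$ factor by the explicit unitary $W=\tfrac{1}{\sqrt2}(1\otimes\gamma_3+\gamma_2\otimes\gamma_2)$, which swaps $1\otimes\gamma_2\leftrightarrow\gamma_2\otimes\gamma_3$ while fixing $\gamma_1\otimes\gamma_3$ and $\gamma_3\otimes\gamma_3$; this is the one non-obvious step you will need.
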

		\begin{proof}
			This proof will be done for $n$ odd, the same strategy works for $n$ even. % and differs only in certain formulas.
			We will build the unitary equivalence in several steps, starting from the unitary map
			\begin{align*}
				& V:\E \otimes_{C_0(\R^{2k})} L^2(\R^{2k}, \cS) \to L^2( \S^{2k-1} \times (-\varepsilon, \varepsilon), \cS) \otimes \C^2, \\
				& V(g \otimes \psi)(\vec{\theta}, s) = g(\vec{\theta}, s)\psi(\vec{\theta}, s + 1).
			\end{align*}
			Let us first check that this is actually a unitary map.
			\begin{align*}
				\langle g \otimes \psi, g' \otimes \psi' \rangle_{\E \otimes L^2} & = \langle \psi, \langle g, g' \rangle_\E \cdot \psi' \rangle_{L^2(\R^{2k}, \cS)}, \\
				& = \int_{\R^{2k}} \langle \psi(\theta, r), \langle g, g' \rangle_{\E}(\theta, r) \,\psi'(\theta, r) \rangle_\cS r^{2k-1} \d r\d\theta, \\
				& = \int_{\Ann^{2k}} \langle \psi(\theta, r), \frac{1}{r^{2k-1}}\overline{g(\theta, r-1)}g'(\theta, r-1)\psi'(\theta, r)\rangle_\cS \, r^{2k-1} \d r \d \theta, \\
				& = \int_{ \S^{2k-1} \times (-\varepsilon, \varepsilon)} \langle g(\theta, s)\psi(\theta, s+1), g'(\theta, s)\psi'(\theta, s+1) \rangle_\cS \, \d s \d \theta, \\
				& = \langle V(g \otimes \psi), V(g' \otimes \psi') \rangle_{L^2( \S^{2k-1} \times (-\varepsilon,\varepsilon), \cS)}.
			\end{align*}
			Furthermore $V$ is surjective since $\E$ contains an approximate identity for $L^2( \S^{2k-1} \times (-\varepsilon, \varepsilon), \cS)$ consisting of bump functions with growing support.
			
			We now apply the equivalence $\cS \cong \ \cS^+ \otimes \C^2$ while moving from $\E$ to $\tilde{\E} = \E \otimes \C^2$ to obtain a unitary equivalence from $\tilde{\E} \otimes_{C_0(\R^{2k})} L^2(\R^{2k}, \cS)$ to $L^2( \S^{2k-1} \times (-\varepsilon, \varepsilon), \ \cS^+) \otimes \C^2 \otimes \C^2$.
			Note that the grading on this space is given by $1 \otimes \gamma_3 \otimes \gamma_3$.
			
			Under this unitary equivalence, the operator %This gives us a nice setting for the first evaluation of
                        $D_\times$ transforms as follows. 
			The term $\tilde{S} \otimes 1$ simply becomes $f(s) \otimes 1 \otimes \gamma_2$, while 
                        $\gamma_3 \otimes_{\nabla^{\tilde{\E}}} D_{\R^{2k}}$ transforms to 
			\begin{equation*}
				\gamma_3 \otimes_{\nabla^{\tilde{\E}}} D_{\R^{2k}} \sim \frac{1}{r}D^+_{ \S^{2k-1}} \otimes \gamma_2 \otimes \gamma_3 + i \partial_s \otimes \gamma_1 \otimes \gamma_3,
			\end{equation*}
			%The%%  computation to get this formula is long and not very enlightening, so we only give a short description.
			%% The angular differential terms in the connection $\nabla^{\E}$ combine with derivatives of $D^+_{ \S^{2k-1}}$ via the product rule, and the same holds for the $\partial_s$ part of $\nabla^{\E}$ with the radial derivative of $D_{\R^{2k}}$.
			where there is a crucial cancellation between the term $i \frac{2k-1}{2r}$ from $D_{\R^{2k}}$ against the $-\frac{2k-1}{2(s+1)}$ in the connection.
			
			We now apply the following unitary transformation to the $\C^2 \otimes \C^2$ component.
			\begin{equation*}
				W = \frac{1}{\sqrt{2}}(1 \otimes \gamma_3 + \gamma_2 \otimes \gamma_2)
			\end{equation*}
			The properties of the $\gamma$-matrices make it straightforward to check that $W$ is a unitary such that
			\begin{align*}
				W(\gamma_3 \otimes \gamma_3)W^* = \gamma_3 \otimes \gamma_3, & &
				W(1 \otimes \gamma_2)W^* = \gamma_2 \otimes \gamma_3, \\
				W(\gamma_1 \otimes \gamma_3)W^* = \gamma_1 \otimes \gamma_3, & &
				W(\gamma_2 \otimes \gamma_3)W^* = 1 \otimes \gamma_2.
			\end{align*}
			This transforms the product operator into
			\begin{equation*}
				D_\times \sim f(s) \otimes \gamma_2 \otimes \gamma_3 + \frac{1}{1+s}D^+_{ \S^{2k-1}} \otimes 1 \otimes \gamma_2 + i \partial_s \otimes \gamma_1 \otimes \gamma_3.
			\end{equation*}
			Finally, upon identifying $L^2( \S^{2k-1} \times (-\varepsilon, \varepsilon), \ \cS^+) \cong L^2( \S^{2k-1}, \ \cS^+) \otimes L^2((-\varepsilon, \varepsilon))$ we find that % to find $\tilde{\E} \otimes_{C_0(\R^{2k})} L^2(\R^{2k}, \cS) \cong L^2( \S^{2k-1}, \ \cS^+) \otimes \C^2 \otimes L^2((-\varepsilon, \varepsilon), \C^2)$.
		%	Under this equivalence
			\begin{align*}
				D_\times & \sim (D^+_{ \S^{2k-1}} \otimes \gamma_2) \otimes \frac{1}{1+s} + (1 \otimes \gamma_3) \otimes (\gamma_2 f(s) + i\gamma_1\partial_s), \nonumber \\
				& = \widetilde{D_{ \S^{2k-1}}} \otimes \frac{1}{1+s} + \gamma_3 \otimes T. \qedhere
			\end{align*}
		\end{proof}

		This expression for $D_\times$ is essential for our further investigation and in fact already 
%                , and is the motivation for the specific unitary transformation $W$ that we applied to the spinors.
		%Primarily, $D_\times$ now
                closely resembles the external product of $\widetilde{D^+_{ \S^{2k-1}}}$ or $D_{ \S^{2k}}$ and the index cycle $(L^2((-\varepsilon, \varepsilon), \C^2), T)$.
		Secondly, this separated form allows us to investigate the analytical properties of $D_\times$ in terms of the already understood operators $\widetilde{D_{ \S^{2k-1}}}$, $D_{ \S^{2k}}$ and $T$.
		
	\subsection{Analysis of the product operator}
	\label{sec:product_operator_analysis}
	
	We will now prove that $D_\times$ is essentially self-adjoint and that it %has a nice domain of self-adjointness and that $D_\times$
        has compact resolvent.
%		In both cases the strategy will be to use the fact that we separated the unbounded parts of $D_\times$ and that the influence of the bounded factors do not influence the unbounded behaviour too much.

		%For self-adjointness we use a concept from \cite{Dungen} called an adequate approximate identity.
		For self-adjointness we use the concept of an adequate approximate identity introduced in \cite{MeslandRennie2016}.
		The approach is similar to \cite{Dungen} where van den Dungen proves self-adjointness of a perturbed Dirac operator using an adequate approximate identity corresponding to the original Dirac operator. Let us recall the setup.
				
		\begin{definition}
			\label{def:adequate_approximate_identity}
			Let $D:\dom(D) \to H$ be a densely defined symmetric operator on some Hilbert space $H$.
			An \emph{adequate approximate identity} for $D$ is a sequential approximate identity $\{\phi_k\}_{k \in \N}$ on $H$ such that $\phi_k \dom(D^*) \subset \dom(\overline{D})$, $[\overline{D}, \phi_k]$ is bounded on $\dom(D)$ and $\sup_{k \in \N} \|\overline{[\overline{D}, \phi_k]}\| < \infty$.
		\end{definition}
		\begin{remark}
			%Van den Dungen gives the previous definition in the context of Hilbert modules. 
			%We will keep to the Hilbert space case, even though Proposition \ref{prop:aai_implies_esa} continues to hold in the Hilbert module setting, albeit with a more intricate proof.
			The definition of an adequate approximate identity is usually given in the context of Hilbert modules.
			We restrict our attention to the Hilbert space case, since it suffices for our purposes.
			All results, such as Proposition \ref{prop:aai_implies_esa}, still hold in the Hilbert module case.
		\end{remark}
		
		The motivation for introducing these adequate approximate identities is the following proposition.
		
		\begin{proposition}
			\label{prop:aai_implies_esa}
			Let $D:\dom(D) \to H$ be a densely defined symmetric operator on a Hilbert space $H$ and suppose $\{\phi_k\}_{k \in \N}$ is an adequate approximate identity for $D$, then $D$ is essentially self-adjoint.
		\end{proposition}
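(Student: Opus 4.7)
\proof[Proof proposal]
Since $D$ is symmetric we have $\overline{D}\subset D^*$, so essential self-adjointness is equivalent to the reverse inclusion $\dom(D^*)\subset \dom(\overline{D})$. The plan is to fix $\xi\in\dom(D^*)$ and exhibit it as a graph-limit of elements in $\dom(\overline{D})$, using the cut-offs $\phi_k\xi$.

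First I would record the three ingredients supplied by Definition \ref{def:adequate_approximate_identity}: (i) $\phi_k\xi\to \xi$ and $\phi_k D^*\xi\to D^*\xi$ in $H$, because $\{\phi_k\}$ is an approximate identity; (ii) $\phi_k\xi\in\dom(\overline{D})$ for every $k$; (iii) the commutators $[\overline{D},\phi_k]$ extend to bounded operators with $C:=\sup_k\|\overline{[\overline{D},\phi_k]}\|<\infty$. I would then establish the identity
\begin{equation*}
  \overline{D}(\phi_k\xi)=\phi_k D^*\xi+\overline{[\overline{D},\phi_k]}\,\xi
\end{equation*}
by pairing both sides against an arbitrary $\eta\in\dom(D)$, moving $D$ through $\phi_k$ using the commutator, and invoking the defining property of $D^*$; this is the routine but delicate step that actually uses $\xi\in\dom(D^*)$.

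Having this identity in hand, the proof is essentially over. Point (i) takes care of $\phi_k D^*\xi\to D^*\xi$ strongly. The bounded net $\{\overline{[\overline{D},\phi_k]}\,\xi\}_k$ is norm-bounded by $C\|\xi\|$, so by Banach–Alaoglu I extract a weakly convergent subsequence $\overline{[\overline{D},\phi_{k_j}]}\,\xi\rightharpoonup \zeta\in H$. Combining, $\phi_{k_j}\xi\to\xi$ in norm and $\overline{D}(\phi_{k_j}\xi)\rightharpoonup D^*\xi+\zeta$. Since the graph of $\overline{D}$ is a closed, hence weakly closed, subspace of $H\oplus H$, the weak limit of the pair $(\phi_{k_j}\xi,\overline{D}\phi_{k_j}\xi)$ also lies in the graph. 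This gives $\xi\in\dom(\overline{D})$, which is what we needed.

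The main obstacle is step two: justifying the commutator identity rigorously, because $\xi$ itself is only in $\dom(D^*)$ while $\phi_k\xi$ is in $\dom(\overline{D})$, so the expression $[\overline{D},\phi_k]\xi$ has to be read as the bounded extension of the commutator acting on an arbitrary element of $H$. Everything else is soft functional analysis: an approximate-identity convergence, a Banach–Alaoglu extraction, and closedness of the graph. I expect no surprises in adapting this argument to the Hilbert-module setting mentioned in the remark, since weak topologies and closed graphs behave analogously there.
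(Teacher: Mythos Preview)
Your argument is correct and is essentially the standard proof; the paper itself does not give an argument here but simply refers to \cite{MeslandRennie2016}. The weak-closure manoeuvre you use---bounding $\overline{D}(\phi_k\xi)$ via the commutator identity, extracting a weak limit by Banach--Alaoglu, and invoking weak closedness of the graph---is exactly how the result is established in that reference, so there is nothing to compare.

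One small point worth making explicit when you write this up: your pairing computation for the identity $\overline{D}(\phi_k\xi)=\phi_k D^*\xi+\overline{[\overline{D},\phi_k]}\,\xi$ implicitly moves $\phi_k$ across the inner product, i.e.\ uses $\phi_k^*=\phi_k$ (and then that $\phi_k^*\eta\in\dom(\overline{D})$ for $\eta\in\dom(D)$). Definition~\ref{def:adequate_approximate_identity} as stated does not literally require self-adjointness, but approximate identities are conventionally positive, and every instance constructed in this paper (Lemma~\ref{lem:aai_of_sa_operator}, Proposition~\ref{prop:product_sum_is_esa}) has $\phi_k=\phi_k^*$; so this is a cosmetic gap rather than a real one. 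Your closing remark about the Hilbert-module case is also accurate: closed submodules are weakly closed and bounded sets in the dual are weak-$*$ compact, so the same argument goes through.
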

		\begin{proof}
			See \cite{MeslandRennie2016}.
		\end{proof}
		
		We also have a converse.
		
		\begin{lemma}
			\label{lem:aai_of_sa_operator}
			Suppose $D:\dom(D) \to H$ is a self-adjoint operator.
			Then $\phi_k := (1 + \frac{1}{k^2}D^2)^{-1}$ defines an adequate approximate identity $\{\phi_k\}_{k \in \N}$ for $D$.
			Furthermore $\|(1 + \frac{1}{k^2}D^2)^{-1}\| \leq 1$ and $\|D(1 + \frac{1}{k^2}D^2)^{-1}\| \leq k$.
		\end{lemma}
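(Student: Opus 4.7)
The plan is to derive everything from the Borel functional calculus of the self-adjoint operator $D$. Since the function $g_k(x) = 1/(1+x^2/k^2)$ is bounded, continuous and strictly positive on $\R$, the operator $\phi_k = g_k(D)$ is a bounded self-adjoint operator whose range lies in $\dom(D^2) \subset \dom(D)$; in particular $\phi_k$ and $D$ commute as operators coming from the same functional calculus.

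First I would establish the two norm bounds via the spectral theorem. The estimate $\|\phi_k\| \le \sup_{\lambda \in \R} |g_k(\lambda)| = 1$ is immediate since $1 + \lambda^2/k^2 \ge 1$. For $\|D \phi_k\|$, I would use $\|D\phi_k\| \le \sup_{\lambda \in \R} |\lambda\, g_k(\lambda)|$; a one-variable calculus argument shows that $|\lambda|/(1+\lambda^2/k^2)$ attains its maximum at $\lambda = \pm k$ with value $k/2$, so in particular $\|D\phi_k\| \le k$.

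Next I would prove that $\{\phi_k\}_{k \in \N}$ is a (sequential) approximate identity on $H$. For fixed $\psi \in H$ let $\mu_\psi$ denote the associated spectral measure; then $\|\phi_k \psi - \psi\|^2 = \int_\R |g_k(\lambda) - 1|^2 \, d\mu_\psi(\lambda)$. Since $|g_k - 1| \le 1$ uniformly and $g_k(\lambda) \to 1$ pointwise as $k \to \infty$, dominated convergence applied to the finite measure $\mu_\psi$ yields $\phi_k \psi \to \psi$.

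To finish the adequacy verification, recall that $D^* = \overline{D} = D$ since $D$ is self-adjoint, so the condition $\phi_k \dom(D^*) \subset \dom(\overline{D})$ follows from $\phi_k H \subset \dom(D^2) \subset \dom(D)$ noted above. Because $D$ and $\phi_k$ both arise from the functional calculus of the single self-adjoint operator $D$, they commute on $\dom(D)$, so $[\overline{D}, \phi_k] = 0$ there; this commutator is then trivially bounded with $\sup_k \|\overline{[\overline{D}, \phi_k]}\| = 0 < \infty$. I do not anticipate any serious obstacle: the only delicate point is justifying that $D$ and $\phi_k$ commute literally (not just in some weak sense) on $\dom(D)$, which is immediate from the spectral theorem.
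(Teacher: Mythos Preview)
Your proof is correct and follows essentially the same approach as the paper: both arguments rest on the Borel functional calculus for the self-adjoint operator $D$, with the paper outsourcing the norm estimates, the approximate-identity property, and the vanishing of $[D,\phi_k]$ to \cite[Thm.~5.1.9]{Pedersen} while you spell these out directly via spectral measures and dominated convergence. The only cosmetic difference is that the paper obtains $\phi_k H \subset \dom(D)$ from the resolvent factorization $\phi_k = k^2(D+ki)^{-1}(D-ki)^{-1}$, whereas you read it off from the boundedness of $\lambda \mapsto \lambda\, g_k(\lambda)$; these are equivalent observations.
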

		\begin{proof}
			The norm-estimates, as well as the fact that $\{\phi_k\}$ defines an approximate unit, are in \cite[Thm 5.1.9]{Pedersen}.
			Furthermore, this theorem tells us that $[D, \phi_k] = 0$ on $\dom(D)$.
			The only remaining requirement is then that $\phi_k \dom(D) \subset \dom(D)$, we even have the stronger result that $\phi_k H \subset \dom(D)$ since $\phi_k = k^2(D + ki)^{-1}(D - ki)^{-1}$ and the resolvents map $H$ into $\dom(D)$.
		\end{proof}
		
		We will show that, starting from adequate approximate identities for two self-adjoint operators $D_1$ and $D_2$, we can construct an adequate approximate identity for $D_1 \otimes A + B \otimes D_2$ provided we have some control over the interaction between $A$ and $D_2$, and $B$ and $D_1$.
		
		\begin{proposition}
			\label{prop:product_sum_is_esa}
			Let $D_1:\dom(D_1) \to H_1$ and $D_2:\dom(D_2) \to H_2$ be densely defined self-adjoint operators on Hilbert spaces $H_1$ and $H_2$.
			Let $A:H_2 \to H_2$ and $B:H_1 \to H_1$ be bounded, self-adjoint operators, such that $\|(1 + \frac{1}{k^2}D_1^2)^{-1}[B, D_1^2](1 + \frac{1}{k^2}D_1^2)^{-1}\| \leq c_1 k$ and $\|(1 + \frac{1}{k^2}D_2^2)^{-1}[A, D_2^2](1 + \frac{1}{k^2}D_2^2)^{-1}\| \leq c_2 k$ for some $c_1, c_2 \in \R$.
			Then $D_1 \otimes A + B \otimes D_2$ is essentially self-adjoint on $\dom(D_1) \algotimes \dom(D_2)$.
		\end{proposition}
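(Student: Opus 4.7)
The plan is to build an adequate approximate identity for $D := D_1 \otimes A + B \otimes D_2$ out of the adequate approximate identities that Lemma~\ref{lem:aai_of_sa_operator} provides for $D_1$ and $D_2$, and then apply Proposition~\ref{prop:aai_implies_esa}. The natural candidate is
\begin{equation*}
\phi_k := \phi_k^{(1)} \otimes \phi_k^{(2)}, \qquad \phi_k^{(i)} := \left( 1 + \tfrac{1}{k^2} D_i^2 \right)^{-1},
\end{equation*}
which is a contraction on $H_1 \otimes H_2$ and, since each factor converges strongly to the identity on $H_i$, converges strongly to the identity on $H_1\otimes H_2$.

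The main computation is the commutator $[D, \phi_k]$. Because $D_i$ commutes with $\phi_k^{(i)}$, a direct expansion on the dense domain $\dom(D_1) \algotimes \dom(D_2)$ collapses to
\begin{equation*}
[D, \phi_k] = \phi_k^{(1)} D_1 \otimes [A, \phi_k^{(2)}] + [B, \phi_k^{(1)}] \otimes \phi_k^{(2)} D_2.
\end{equation*}
The second resolvent identity gives $[A, \phi_k^{(2)}] = -\tfrac{1}{k^2}\phi_k^{(2)}[A, D_2^2]\phi_k^{(2)}$, so the hypothesis on $[A, D_2^2]$ forces $\|[A, \phi_k^{(2)}]\| \le c_2/k$; symmetrically $\|[B, \phi_k^{(1)}]\| \le c_1/k$. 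Combined with the bound $\|\phi_k^{(i)} D_i\| \le k$ from Lemma~\ref{lem:aai_of_sa_operator}, each of the two tensor-product terms is bounded in norm by $c_i$ independently of $k$, giving $\sup_k \|[D,\phi_k]\| \le c_1 + c_2$.

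For the remaining domain condition $\phi_k \dom(D^*) \subset \dom(\overline{D})$, I would use that $\phi_k^{(i)}$ maps all of $H_i$ into $\dom(D_i)$ (even $\dom(D_i^2)$), so $\phi_k (H_1 \otimes H_2)$ lies in the closure of $\dom(D_1) \algotimes \dom(D_2)$ for the graph norm of $D$: this follows by spectral truncation of each $D_i$ followed by a passage to the limit, where the uniform commutator bound above is what controls the graph-norm convergence. Once this inclusion is in place, the commutator $[\overline{D}, \phi_k]$ is bounded on all of $H_1 \otimes H_2$ and agrees by continuity with the expression derived on the algebraic tensor product, so all three requirements of an adequate approximate identity are met and Proposition~\ref{prop:aai_implies_esa} yields essential self-adjointness of $D$ on $\dom(D_1) \algotimes \dom(D_2)$.

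I expect the main obstacle to be verifying the domain condition $\phi_k \dom(D^*) \subset \dom(\overline{D})$: the algebraic commutator identity is transparent on $\dom(D_1) \algotimes \dom(D_2)$, but extending it to $\dom(D^*)$ requires care since $A$ need not preserve $\dom(D_2)$ and $B$ need not preserve $\dom(D_1)$, so one cannot simply work on the algebraic tensor product. The norm estimates, by contrast, are short applications of the second resolvent identity together with the stated hypotheses.
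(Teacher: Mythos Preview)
Your approach matches the paper's: the same $\phi_k = \phi_k^{(1)}\otimes\phi_k^{(2)}$, the same commutator computation, and the same appeal to Proposition~\ref{prop:aai_implies_esa}. The one place you diverge is the domain condition, which you flag as the main obstacle and propose to handle by spectral truncation; the paper's argument here is considerably simpler and worth knowing. Rather than truncating, observe directly that $a\phi_k$ is bounded on $\dom(D_1)\algotimes\dom(D_2)$, since
\[
\|a\phi_k\| \le \|D_1\phi_k^{(1)}\|\,\|A\phi_k^{(2)}\| + \|B\phi_k^{(1)}\|\,\|D_2\phi_k^{(2)}\| \le k\|A\| + k\|B\|.
\]
Then for arbitrary $z\in H_1\otimes H_2$, choose $z_n\to z$ with $z_n\in H_1\algotimes H_2$; one has $\phi_k z_n\in\dom(D_1)\algotimes\dom(D_2)$, $\phi_k z_n\to\phi_k z$, and $a\phi_k z_n\to a\phi_k z$ by boundedness, so $\phi_k z\in\dom(\overline{a})$. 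This gives the stronger inclusion $\phi_k(H_1\otimes H_2)\subset\dom(\overline{a})$ without any spectral calculus, and sidesteps precisely the issue you worried about (that $A$, $B$ need not preserve the domains).
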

		\begin{proof}
			We will show that
			\begin{align*}
				\phi_k := (1 + \frac{1}{k^2}D_1^2)^{-1} \otimes (1 + \frac{1}{k^2}D_2^2)^{-1}
			\end{align*}
			is an adequate approximate identity for $D_1 \otimes A + B \otimes D_2$, and then invoke Proposition \ref{prop:aai_implies_esa}.
			For ease of notation introduce $a = D_1 \otimes A + B \otimes D_2$.
			
			First note that $\phi_k$ is an approximate identity for $H_1 \otimes H_2$, since it clearly is one on the dense subspace $H_1 \algotimes H_2$.
			
			Next, we show that $\phi_k \dom(a^*) \subset \dom(\overline{a})$, in fact we will show the stronger $\phi_k H_1 \otimes H_2 \subset \dom(\overline{a})$ similar to what we saw in Lemma \ref{lem:aai_of_sa_operator}.
			We will use that $a\phi_k$ is bounded, indeed
			\begin{align*}
				\|a\phi_k \| 
				& \leq \| D_1(1 + \frac{1}{k^2}D_1^2)^{-1} \otimes A(1 + \frac{1}{k^2}D_2^2)^{-1}\| \\
				& \hspace{4em} + \|B(1 + \frac{1}{k^2}D_1^2)^{-1} \otimes D_2(1 + \frac{1}{k^2}D_2^2)^{-1} \|, \\
				& \leq k\|A\| + \|B\|k.
			\end{align*}
			
			Let $z \in H_1 \otimes H_2$, $z = \lim_n z_n$, $z_n \in H_1 \algotimes H_2$ and fix $k$.
			Clearly $\phi_k z_n \in \dom(a) = \dom(D_1) \algotimes \dom(D_2)$ and $\phi_k z_n \to \phi_k z$ since $\phi_k$ is bounded.
			Moreover, as we just saw, $a\phi_k$ is bounded so that $a\phi_k z_n \to a \phi_k z$.
			Since we have a sequence in $\dom(a)$ converging to $\phi_k z$ for which the images under $a$ also converge, we get $\phi_k z \in \dom(\overline{a})$.
			
			Finally we consider $[\overline{a}, \phi_k]$ on $\dom(a)$, and show that these commutators are bounded uniformly in $k$.
			Recall from the proof of Lemma \ref{lem:aai_of_sa_operator} that $(1 + \frac{1}{k^2}D_i^2)^{-1}$ and $D_i$ commute on $\dom(D_i)$.
			Then
			\begin{align*}
				[\overline{a}, \phi_k] & = D_1(1 + \frac{1}{k^2}D_1^2)^{-1} \otimes [A, (1 + \frac{1}{k^2}D_2^2)^{-1}]  \\ & \hspace{3em} + [B, (1 + \frac{1}{k^2}D_1^2)^{-1}] \otimes D_2(1 + \frac{1}{k^2}D_2^2)^{-1}.
			\end{align*}
			Since $\|D_i(1 + \frac{1}{k^2}D_i^2)^{-1}\| \leq k$ we want to find a bound of order $\frac{1}{k}$ for the commutators $[A, (1 + \frac{1}{k^2}D_2^2)^{-1}]$ and $[B, (1 + \frac{1}{k^2}D_1^2)^{-1}]$.
			
			We start by rewriting these commutators in terms of the original operators
			\begin{align*}
				[B, (1 + \frac{1}{k^2}D_1^2)^{-1}] & = (1 + \frac{1}{k^2}D_1^2)^{-1}[1 + \frac{1}{k^2}D_1^2, B](1 + \frac{1}{k^2}D_1^2)^{-1}, \\
				& = -\frac{1}{k^2} (1 + \frac{1}{k^2}D_1^2)^{-1}[B, D_1^2](1 + \frac{1}{k^2}D_1^2)^{-1}.
			\end{align*}
			By assumption there exists a $c$ such that
			\begin{align*}
				\|(1 + \frac{1}{k^2}D_1^2)^{-1}[B, D_1^2](1 + \frac{1}{k^2}D_1^2)^{-1}\| \leq c_1 k
			\end{align*}
			which implies
			\begin{align*}
				\|[B, (1 + \frac{1}{k^2}D_1^2)^{-1}]\| \leq c_1 \frac{1}{k}.
			\end{align*}
			By the same reasoning we get
			\begin{align*}
				\|[A, (1 + \frac{1}{k^2}D_2^2)^{-1}]\| \leq c_2 \frac{1}{k}.
			\end{align*}
			Together this implies $\|[\overline{a}, \phi_k]\| \leq c_1 + c_2$ which completes the proof.
			%			Now that we have established that $\phi_k$ is an adequate approximate identity for $a$, we find that $a$ is essentially self-adjoint by Proposition \ref{prop:aai_implies_esa}.
		\end{proof}

		\begin{corollary}
			\label{cor:product_sum_essentially_self_adjoint_domains}
			If $D_1$ and $D_2$ are essentially self-adjoint on $\dom(D_1)$ and $\dom(D_2)$ and satisfy the assumptions in Proposition \ref{prop:product_sum_is_esa}, then $D_1\otimes A + B \otimes D_2$ is essentially self-adjoint on $\dom(D_1) \algotimes \dom(D_2)$.
		\end{corollary}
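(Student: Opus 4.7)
The plan is to reduce the corollary to Proposition \ref{prop:product_sum_is_esa} by passing to the self-adjoint closures $\overline{D_1}$ and $\overline{D_2}$, and then to argue that the smaller algebraic tensor product $\dom(D_1) \algotimes \dom(D_2)$ is a core for the self-adjoint operator one obtains on $\dom(\overline{D_1}) \algotimes \dom(\overline{D_2})$.

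First I would apply Proposition \ref{prop:product_sum_is_esa} directly to the pair $(\overline{D_1}, \overline{D_2})$. Since $D_i$ is essentially self-adjoint, $\overline{D_i}$ is self-adjoint, and the functional-calculus expressions $(1 + k^{-2}\overline{D_i}^2)^{-1}$ occurring in the hypotheses are the same objects as those appearing in the statement of the corollary (the two conditions involving the commutators $[B, D_1^2]$ and $[A, D_2^2]$ only make sense once one passes to the self-adjoint closures anyway). The proposition therefore yields a self-adjoint operator $\Delta$, namely the closure of $\overline{D_1} \otimes A + B \otimes \overline{D_2}$ defined initially on $\dom(\overline{D_1}) \algotimes \dom(\overline{D_2})$.

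Next I would show that the closure of $D_1 \otimes A + B \otimes D_2$ on the smaller domain $\dom(D_1) \algotimes \dom(D_2)$ coincides with $\Delta$. One inclusion is immediate: this smaller operator is a symmetric restriction of $\Delta$, so its closure is contained in $\Delta$. For the reverse direction, given any elementary tensor $u \otimes v$ with $u \in \dom(\overline{D_1})$ and $v \in \dom(\overline{D_2})$, pick sequences $u_n \in \dom(D_1)$ and $v_n \in \dom(D_2)$ with $u_n \to u$, $D_1 u_n \to \overline{D_1} u$, $v_n \to v$, and $D_2 v_n \to \overline{D_2} v$. Then $u_n \otimes v_n \to u \otimes v$ and, using boundedness of $A$ and $B$,
\begin{align*}
(D_1 \otimes A + B \otimes D_2)(u_n \otimes v_n) &= D_1 u_n \otimes A v_n + B u_n \otimes D_2 v_n \\
&\longrightarrow \overline{D_1} u \otimes A v + B u \otimes \overline{D_2} v.
\end{align*}
Extending by linearity shows that every element of $\dom(\overline{D_1}) \algotimes \dom(\overline{D_2})$ lies in the graph closure of $D_1 \otimes A + B \otimes D_2$ on $\dom(D_1) \algotimes \dom(D_2)$. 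Since $\dom(\overline{D_1}) \algotimes \dom(\overline{D_2})$ is a core for $\Delta$, this closure equals $\Delta$ and is therefore self-adjoint.

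There is no genuine obstacle in the argument; the only point requiring attention is the interpretation of the hypothesis and the verification that the two closures agree, which is the approximation step above. All of this rests on the bounded operators $A$ and $B$ being continuous, so that graph-norm convergence along one tensor factor combines correctly with norm convergence along the other.
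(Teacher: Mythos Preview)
Your proposal is correct and follows essentially the same approach as the paper: apply Proposition \ref{prop:product_sum_is_esa} to the self-adjoint closures $\overline{D_1},\overline{D_2}$, then show that $\dom(D_1)\algotimes\dom(D_2)$ is a core by approximating an elementary tensor $u\otimes v\in\dom(\overline{D_1})\algotimes\dom(\overline{D_2})$ by $u_n\otimes v_n$ with $u_n,v_n$ in the original domains and using boundedness of $A,B$ to get graph-norm convergence. The paper spells out the triangle-inequality estimate for $\|a(u_n\otimes v_n)-a(u\otimes v)\|$ in four pieces, but the argument is the same.
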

		\begin{proof}	
			Write $\dom(\overline{D_1})$ and $\dom(\overline{D_2})$ for the domains of self-adjointness of $D_1$ and $D_2$.
			Then we know that $D_1 \otimes A + B \otimes D_2$ is essentially self-adjoint on $\dom(\overline{D_1}) \algotimes \dom(\overline{D_2})$.
			Write $a_0$ for the closure of $D_1 \otimes A + B \otimes D_2$ defined on $\dom(D_1) \algotimes \dom(D_2)$ and $a$ for the closure on $\dom(\overline{D_1}) \algotimes \dom(\overline{D_2})$.
			
			Clearly $a_0 \subset a$, so we want to show that $a \subset a_0$.
			This follows if we can show that $\dom(\overline{D_1}) \algotimes \dom(\overline{D_2}) \subset \overline{\dom(D_1) \algotimes \dom(D_2)}$, with the closure taken in the graph-norm of $a$.
			So suppose $\psi \otimes \phi \in \dom(\overline{D_1}) \algotimes \dom(\overline{D_2})$.
			Then $\psi = \lim x_n$, $\phi = \lim y_n$ such that $D_1 \psi = \lim D_1 x_n$ and $D_2 \phi = \lim D_2 y_n$, with $x_n \in \dom(D_1)$ and $y_n \in \dom(D_2)$ since the $D_i$ are essentially self-adjoint on the $\dom(D_i)$.
			But then
			\begin{align*}
				\|a(x_n \otimes y_n) - & a(\psi \otimes \phi)\| \\ = & \| (D_1 \otimes A)(x_n \otimes y_n - \psi \otimes \phi) + (B \otimes D_2)(x_n \otimes y_n - \psi \otimes \phi) \|, \\
				\leq &  \|D_1 x_n \otimes A y_n - D_1 \psi \otimes A y_n\| + \|D_1 \psi \otimes A y_n - D_1 \psi \otimes A \phi\| \\
				& + \|B x_n \otimes D_2 y_n - B x_n \otimes D_2 \phi\| + \|B x_n \otimes D_2 \phi - B \psi \otimes D_2 \phi\|, \\
				\leq & \| D_1(x_n - \psi) \| \cdot \|A y_n\| + \|D_1 \psi \| \cdot \|A (y_n - \phi)\| \\
				& + \|B x_n\| \cdot \|D_2(y_n - \phi)\| + \| B(x_n - \psi) \|\cdot\|D_2 \phi \|
			\end{align*}
			tends to zero.
			Therefore $\psi \otimes \phi \in \overline{\dom(D_1) \algotimes \dom(D_2)}$ (closure in the graph norm) so that $a \subset a_0$.
		\end{proof}

		\begin{corollary}
			\label{cor:D_product_self_adjoint}
			The operator $D_\times$ is essentially self-adjoint on $\dom(\widetilde{D^+_{ \S^{2k-1}}}) \algotimes \dom(T)$ or $\dom(D_{ \S^{2k}}) \algotimes \dom(T)$ (depending on whether $n$ is odd or even).
		\end{corollary}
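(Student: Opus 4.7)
The plan is to reduce this to Corollary~\ref{cor:product_sum_essentially_self_adjoint_domains} via the unitary equivalence established in Proposition~\ref{lem:product_space_of_E_R2}. After conjugating by $U$, the product operator becomes $\widetilde{D_{\S^{2k-1}}} \otimes \tfrac{1}{1+s} + \gamma_3 \otimes T$ in the odd case and $D_{\S^{2k}} \otimes \tfrac{1}{1+s} + \gamma_r \otimes T$ in the even case. Since unitary equivalence preserves essential self-adjointness and the domains correspond under $U$, it suffices to apply Corollary~\ref{cor:product_sum_essentially_self_adjoint_domains} with $D_1 = \widetilde{D_{\S^{2k-1}}}$ or $D_{\S^{2k}}$, $D_2 = T$, $A$ multiplication by $\tfrac{1}{1+s}$, and $B = \gamma_3$ or $\gamma_r$. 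The operators $A$ and $B$ are bounded and self-adjoint; $A$ is bounded because $\varepsilon < 1$ ensures $1+s$ stays away from zero on $(-\varepsilon, \varepsilon)$.

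The essential self-adjointness of $D_1$ on its indicated domain is standard: $D_{\S^n}$ is the Dirac operator on a closed Riemannian manifold and the doubled version inherits this, while essential self-adjointness of $T$ on $C_c^\infty((-\varepsilon,\varepsilon), \C^2)$ was established in Proposition~\ref{prop:range_of_T0_dense}. So the only genuine work is to verify the two commutator bounds.

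The commutator condition for $B$ is trivial: $D_1^2$ acts diagonally on the Clifford/doubling factor that carries $B$, i.e., $\widetilde{D_{\S^{2k-1}}}^2 = (D^+_{\S^{2k-1}})^2 \otimes 1$ anticommutes with no Clifford matrix and in particular commutes with $\gamma_3$, and similarly $[\gamma_r, D_{\S^{2k}}^2] = 0$ because $\gamma_r$ is precisely the grading under which $D_{\S^{2k}}$ is odd. Hence $[B, D_1^2] = 0$ and one may take $c_1 = 0$.

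The main work is the bound on $[A, T^2]$. Using the explicit form $T^2 = \smallMatTwo{-\partial_s^2 - \alpha^2}{0}{0}{-\partial_s^2 + 2f(s)^2 + \alpha^2}$ (which exploits the identity $f(s)^2 - f'(s) = -\alpha^2$ from Lemma~\ref{lem:I_J_integrating_factors}), a direct calculation gives $[A, T^2] = -2 A' \partial_s - A''$ with $A' = -\tfrac{1}{(1+s)^2}$ and $A'' = \tfrac{2}{(1+s)^3}$, both bounded multiplication operators on $(-\varepsilon,\varepsilon)$. Sandwiching by $(1+k^{-2}T^2)^{-1}$ and using $\|(1+k^{-2}T^2)^{-1}\| \leq 1$ reduces the task to bounding $\|\partial_s (1+k^{-2}T^2)^{-1}\|$. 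Here I would invoke Lemma~\ref{lem:graph_norm_T}, which (after closure and a density argument) gives $\|\partial_s \psi\|^2 \leq \|T\psi\|^2 + \alpha^2 \|\psi\|^2$ for $\psi \in \dom(T)$. Combined with $\|T (1+k^{-2}T^2)^{-1}\| \leq k$ and $\|(1+k^{-2}T^2)^{-1}\| \leq 1$, this gives $\|\partial_s (1+k^{-2}T^2)^{-1}\| \leq k + \alpha$, hence $\|(1+k^{-2}T^2)^{-1}[A, T^2](1+k^{-2}T^2)^{-1}\| \leq c_2 k$ for a suitable constant $c_2$. Verifying this analytic estimate cleanly, in particular extending the Sobolev inequality from the core to $\dom(T)$ and tracking the order in $k$, is the part that requires the most care; once it is in place, Corollary~\ref{cor:product_sum_essentially_self_adjoint_domains} finishes the proof on the stated initial domain.
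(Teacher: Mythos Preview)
Your proposal is correct and follows essentially the same route as the paper: identify $D_1,D_2,A,B$, observe $[B,D_1^2]=0$, reduce $[A,T^2]$ to a bounded multiplication term plus a first-order term, and control $\partial_s(1+k^{-2}T^2)^{-1}$ via Lemma~\ref{lem:graph_norm_T}. The only cosmetic differences are a harmless sign slip in your expression for $[A,T^2]$ (it should be $A''+2A'\partial_s$, matching the paper's $\tfrac{2}{(1+s)^3}+\tfrac{2}{(1+s)^2}\partial_s$ up to conventions) and that your derivative bound comes out as $k+\alpha$ rather than the paper's $k$; both are $O(k)$ and suffice for Corollary~\ref{cor:product_sum_essentially_self_adjoint_domains}.
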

		\begin{proof}
			Referring to the notation of Proposition \ref{prop:product_sum_is_esa} we have $D_1 = \widetilde{D^+_{ \S^{2k-1}}}$ or $D_1 = D_{ \S^{2k}}$, $A = \frac{1}{1 + s}1_{\C^2}$, $B = \gamma_3$ and $D_2 = T$.
			
			The relevant commutators are
			\begin{align*}
				[A, D_2^2] = \left( \frac{2}{(1 + s)^3} + \frac{2}{(1 + s)^2}\partial_s \right)1_{\C^2}, \quad
				[B, D_1^2] = 0.
			\end{align*}
			
			We will prove that $\|\partial_s (1 + \frac{1}{k^2}T^2)^{-1}\| \leq k$ and use that to prove the required estimate.
			From Lemma \ref{lem:graph_norm_T} we know that for $\lambda = \pm\alpha$ we have $\|(T + \lambda i) \psi\| \geq \|\psi'\|$.
			This also holds for $|\lambda| \geq \alpha$ since $T$ is symmetric.
			In particular $\|(T + mi)\psi\| \geq \|\psi'\|$ for all $m \in \Z \setminus \{-1, 0, 1\}$ since $\alpha < 2$.
			Furthermore $\|(T + mi)\psi\| \geq m\|\psi\|$ for all $m \in \Z$ and $\psi \in \dom(T)$ by symmetry of $T$.
			
			Let $\phi \in L^2((-\varepsilon, \varepsilon), \C^2)$ be arbitrary.
			Since $T$ is self-adjoint, $T \pm ki$ are invertible so we may define $\psi = (1 + \frac{1}{k^2}T^2)^{-1} \phi$.
			If we combine the two estimates we have for $T \pm ki$ we get
			\begin{equation}
				\| \phi \| = \frac{1}{k^2}\| (T - ki)(T + ki) \psi \| \geq \frac{1}{k^2}k \|\psi'\|.
			\end{equation}

			Then
			\begin{equation}
				\|\partial_s (1 + \frac{1}{k^2}T^2)^{-1} \phi\| = \|\psi'\| \leq k \|\phi\|.
			\end{equation}
			
			So $\|\partial_s(1 + \frac{1}{k^2}T^2)^{-1}\| \leq k$, which in turn means that
			\begin{align*}
				\|(1 + \frac{1}{k^2}D_2^2)^{-1}[A, D_2^2](1 + \frac{1}{k^2}D_2^2)^{-1}\| & \leq 1 \cdot \left(\frac{2}{(1 - \varepsilon)^3}\cdot 1 + \frac{2}{(1 - \varepsilon)^2}\cdot k \right), \\
				& \leq \frac{4}{(1 - \varepsilon)^3}k.
			\end{align*}
			
			Therefore Proposition \ref{prop:product_sum_is_esa} applies, and $D_\times$ is essentially self-adjoint with the stated domains.
		\end{proof}

		\begin{remark}
			In \cite{KaadLeschUnbddProd} self-adjointness of the product operator is proven by showing that $D_1 \otimes 1$ and $\gamma \otimes_\nabla D_2$ separately are (essentially) self-adjoint and that they anti-commute, which then proves that their sum is again (essentially) self-adjoint.
			
			In our case $\gamma^3 \otimes_{\nabla^\E} D_{\R^{n+1}}$ is not essentially self-adjoint on the domain $C^\infty_0( \S^{2k-1} \times (-\varepsilon, \varepsilon))$ (with the appropriate unitary transformations and spinor components), which should be the domain according to \cite{KaadLeschUnbddProd}, so their results on using connections are not directly applicable.
		\end{remark}
		
		Now that we have self-adjointness of $D_\times$, we turn to the resolvents of $D_\times$.
		To prove that these resolvents are compact we will use the min-max principle.
		
		\begin{proposition}[min-max principle]
			\label{prop:min_max_principle}
			Let $D:\dom(D) \to H$ be a self-adjoint operator that is bounded below.
			Then $D$ has compact resolvent if and only if $\mu_n(D) \to \infty$ as $n \to \infty$, where
			\begin{align*}
				\mu_n(D) & = \sup_{\phi_1, ..., \phi_{n-1}} U_D(\phi_1, ..., \phi_{n-1}), \\
				U_D(\phi_1, ..., \phi_m) & = \inf_{\psi \in \dom(D), \|\psi\| = 1, \psi \perp \phi_k \forall k} \langle \psi, A\psi \rangle.
			\end{align*}
		\end{proposition}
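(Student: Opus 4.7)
The plan is to reduce the min-max characterization to a statement about the discrete spectrum of $D$ via the spectral theorem, and then identify compactness of the resolvent with absence of essential spectrum.

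First, since $D$ is self-adjoint and bounded below, the spectral theorem gives a projection-valued measure $E$ on $\sigma(D)\subseteq [c,\infty)$ for some $c\in\R$. The standard min-max argument (essentially a Courant-Fischer calculation) shows that $\mu_n(D)$ coincides either with the $n$-th eigenvalue of $D$ below the bottom of the essential spectrum, counted with multiplicity, or else with $\inf\sigma_{\mathrm{ess}}(D)$ once all such eigenvalues are exhausted. I would prove this by fitting a unit vector $\psi$ into the range of $E(-\infty,\mu_n(D)+\epsilon)$ that is orthogonal to any $\phi_1,\dots,\phi_{n-1}$ provided this range has dimension at least $n$, which gives the upper bound; the lower bound comes from choosing $\phi_1,\dots,\phi_{n-1}$ to be eigenvectors corresponding to the eigenvalues lying strictly below $\mu_n(D)$ and using the spectral calculus to estimate $\langle\psi,D\psi\rangle$ from below on the orthogonal complement.

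With this identification in hand, the equivalence becomes transparent. For the forward direction, suppose $(D-z)^{-1}$ is compact for some (equivalently any) $z\in\rho(D)$. Then its nonzero spectrum consists of eigenvalues accumulating only at $0$, so $\sigma(D)$ consists of eigenvalues of finite multiplicity with $\lambda_n\to\infty$ (using the lower bound on $D$). The identification above then yields $\mu_n(D)=\lambda_n\to\infty$. Conversely, if $\mu_n(D)\to\infty$, then for every $M\in\R$ the spectral projection $E(-\infty,M]$ has finite rank, for otherwise the min-max values would remain bounded by $M$. It follows that $\sigma_{\mathrm{ess}}(D)=\emptyset$ and $\sigma(D)$ consists of isolated eigenvalues of finite multiplicity tending to $+\infty$, so the resolvent $(D-z)^{-1}$ is a norm-limit of finite-rank operators (by spectrally truncating at larger and larger thresholds) and hence compact.

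The main obstacle, and the only part that requires genuine work rather than bookkeeping, is the identification of $\mu_n(D)$ with the spectral quantity described above; the two implications then follow routinely. Since this is a standard result, I would likely invoke the Courant-Fischer-Weyl min-max theorem as presented in \cite{ReedSimon} (which is already cited in the paper for related Schrödinger-operator material) rather than redoing the argument from scratch, and limit the exposition to the translation between the min-max values and compactness of the resolvent.
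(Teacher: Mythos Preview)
Your proposal is correct and lands exactly where the paper does: the paper's entire proof is the single line ``See Theorems XIII.1 and XIII.64 in \cite{ReedSimon}'', and you likewise conclude by deferring the Courant--Fischer identification to \cite{ReedSimon}. Your sketch of the spectral-theoretic reduction is accurate and more informative than the paper's bare citation, but the approach is the same.
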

		\begin{proof}
			See Theorems XIII.1 and XIII.64 in \cite{ReedSimon}.
		\end{proof}
				We will also use the following characterization of compact resolvents.
		
		\begin{proposition}
			\label{prop:compact_resolvent_gives_eigenvectors}
			Let $D:\dom(D) \to H$ be a self-adjoint operator that is bounded below.
			Then $D$ has compact resolvents if and only if there exists a complete orthonormal basis $\{\phi_n\}_{n \in \N}$, $\phi_n \in \dom(D)$ for $H$ consisting of eigenvectors for $D$ with eigenvalues $\lambda_1 \leq \lambda_2 \leq ... $ and $\lambda_n \to \infty$.
		\end{proposition}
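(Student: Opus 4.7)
The plan is to deduce both directions of the equivalence from the classical spectral theorem for compact self-adjoint operators, using boundedness below of $D$ to obtain a compact positive resolvent.

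For the forward implication, suppose $D$ has compact resolvent. Since $D$ is bounded below, I can pick $\mu \in \R$ strictly smaller than the infimum of the spectrum of $D$, so that $D - \mu$ is a positive self-adjoint operator with bounded inverse $R := (D - \mu)^{-1}$. By hypothesis $R$ is compact, and it is self-adjoint and positive. The spectral theorem for compact self-adjoint operators then yields a complete orthonormal basis $\{\phi_n\}_{n\in\N}$ of $H$ consisting of eigenvectors of $R$ with strictly positive eigenvalues $\mu_n$ that accumulate only at $0$. From $R\phi_n = \mu_n \phi_n$ one has $\phi_n = \mu_n (D - \mu)\phi_n$, so $\phi_n \in \dom(D)$ and $D\phi_n = \lambda_n \phi_n$ with $\lambda_n := \mu + \mu_n^{-1}$. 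Reordering the basis so that $\lambda_1 \leq \lambda_2 \leq \dots$ gives $\lambda_n \to \infty$ since $\mu_n \to 0^+$.

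For the converse, assume the existence of an orthonormal basis $\{\phi_n\}$ of eigenvectors with eigenvalues $\lambda_n \to \infty$. Fix any $z \in \C$ with $\Im z \neq 0$ so that $z \notin \sigma(D)$, and consider the finite-rank truncations
\begin{equation*}
    R_N \psi = \sum_{n=1}^{N} \frac{1}{\lambda_n - z}\langle \phi_n, \psi \rangle \phi_n.
\end{equation*}
Because $|\lambda_n - z|^{-1} \to 0$, the $R_N$ converge in operator norm to the bounded operator $R\psi := \sum_n (\lambda_n - z)^{-1}\langle \phi_n, \psi\rangle\phi_n$, so $R$ is compact. A direct computation on the basis shows that $R\psi \in \dom(D)$ and $(D-z)R\psi = \psi$ for every $\psi \in H$, while $R(D-z)\psi = \psi$ on each $\phi_n$ and hence on all of $\dom(D)$ by the standard argument that the $\phi_n$ form a core for $D$. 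Thus $R = (D-z)^{-1}$ is compact, which by the first resolvent identity forces every resolvent of $D$ to be compact.

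The only mildly delicate point is verifying that the operator reconstructed from the spectral data in the converse direction really coincides with the given self-adjoint $D$, rather than merely extending its restriction to the algebraic span of the $\phi_n$; this is a routine consequence of the fact that this span is invariant under $D$, consists of eigenvectors with eigenvalues tending to infinity, and is dense in $H$, so it is a core for any self-adjoint operator agreeing with $D$ on it.
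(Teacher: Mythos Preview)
Your argument is correct and is essentially the standard proof via the spectral theorem for compact self-adjoint operators. The paper itself does not give a proof of this proposition at all: it simply cites Theorem~XIII.64 in Reed--Simon, so there is nothing to compare at the level of strategy. Your write-up supplies exactly the details that such a citation stands in for, including the one genuinely nontrivial point in the converse direction, namely that the algebraic span of the eigenvectors is a core for the given self-adjoint $D$ (which follows from self-adjointness: for $\psi\in\dom(D)$ one has $\langle \phi_n, D\psi\rangle = \lambda_n\langle\phi_n,\psi\rangle$, so the truncations converge in graph norm).
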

		\begin{proof}
			See Theorem XIII.64 in \cite{ReedSimon}.
		\end{proof}
		
		We will apply the min-max principle to $D_\times^2$, which is positive, and hence bounded below, since it is the square of a self-adjoint operator.
		Note that if $D_\times^2$ has compact resolvent, so does $D_\times$.
		
		\begin{proposition}
			\label{prop:D_product_compact_resolvent}
			The operator $D_\times^2$ has compact resolvents.
		\end{proposition}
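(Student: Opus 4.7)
The plan is to apply the min-max principle (Proposition \ref{prop:min_max_principle}) to the positive operator $D_\times^2$, by comparing it from below to an external sum whose compact resolvent is easy to establish. Throughout, I will use the factored form from Proposition \ref{lem:product_space_of_E_R2} and write $D_\times = A + B$ with $A = \widetilde{D_{\S^n}} \otimes \tfrac{1}{1+s}$ and $B = \gamma_3 \otimes T$ (with the obvious substitutions in the even case). Squaring gives $D_\times^2 = A^2 + B^2 + (AB+BA)$, and the first step is to identify and tame the cross term.

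Since $\widetilde{D_{\S^n}}$ anticommutes with $\gamma_3$ (it is off-diagonal in the grading $1\otimes\gamma^3$), the cross term collapses to
\begin{equation*}
AB + BA \;=\; \gamma_3 \widetilde{D_{\S^n}} \otimes [T, \tfrac{1}{1+s}].
\end{equation*}
A direct computation shows $[T, \tfrac{1}{1+s}] = [i\gamma^1 \partial_s, \tfrac{1}{1+s}] = -\tfrac{i\gamma^1}{(1+s)^2}$, which is bounded on $(-\varepsilon,\varepsilon)$. Then for any $\delta > 0$ the Cauchy--Schwarz operator inequality gives, on the algebraic tensor product core,
\begin{equation*}
\pm(AB+BA) \;\leq\; \delta\, \widetilde{D_{\S^n}}^2 \otimes 1 \;+\; C_\delta\cdot 1,
\end{equation*}
where $C_\delta$ depends only on $\delta$ and $\sup|1/(1+s)^2|$. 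Combining with $A^2 = \widetilde{D_{\S^n}}^2 \otimes \tfrac{1}{(1+s)^2} \geq \tfrac{1}{(1+\varepsilon)^2}\widetilde{D_{\S^n}}^2 \otimes 1$ and choosing $\delta < \tfrac{1}{(1+\varepsilon)^2}$, I obtain a quadratic-form lower bound
\begin{equation*}
D_\times^2 \;\geq\; c\bigl(\widetilde{D_{\S^n}}^2 \otimes 1 + 1 \otimes T^2\bigr) - C
\end{equation*}
for some constants $c>0$ and $C\in\R$.

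The comparison operator $\widetilde{D_{\S^n}}^2 \otimes 1 + 1 \otimes T^2$ has compact resolvent: both $\widetilde{D_{\S^n}}$ (a spin Dirac operator on a compact manifold) and $T$ (by Proposition \ref{prop:T_compact_resolvent}) have compact resolvent, hence orthonormal eigenbases $\{e_i\}$, $\{f_j\}$ with real eigenvalues $\lambda_i, \mu_j$ satisfying $|\lambda_i|,|\mu_j|\to\infty$. The tensors $e_i\otimes f_j$ form an orthonormal basis of joint eigenvectors with eigenvalues $\lambda_i^2 + \mu_j^2 \to \infty$, so Proposition \ref{prop:compact_resolvent_gives_eigenvectors} yields compact resolvent.

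To finish, I invoke the min-max principle: the operator-form inequality above transfers to $\mu_N(D_\times^2) \geq c\,\mu_N\bigl(\widetilde{D_{\S^n}}^2 \otimes 1 + 1 \otimes T^2\bigr) - C$, and the right-hand side tends to $\infty$ with $N$ by the previous step. By Proposition \ref{prop:min_max_principle}, $D_\times^2$ has compact resolvent, and therefore so does $D_\times$. The main obstacle is the cross term $AB+BA$, which is \emph{a priori} unbounded; the key observation making the whole argument work is that the anticommutation $\widetilde{D_{\S^n}}\gamma_3 = -\gamma_3\widetilde{D_{\S^n}}$ collapses it to a first-order operator in $\widetilde{D_{\S^n}}$ tensored with a \emph{bounded} function, so that a Peter--Paul argument absorbs a small fraction of $A^2$ and leaves the positivity structure intact.
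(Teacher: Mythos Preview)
Your proof is correct and follows essentially the same strategy as the paper: both use the factored form from Proposition~\ref{lem:product_space_of_E_R2}, compute $D_\times^2$ as $A^2 + B^2$ plus a cross term, exploit the anticommutation $\widetilde{D_{\S^n}}\gamma_3 = -\gamma_3\widetilde{D_{\S^n}}$ to reduce the cross term to $\gamma_3\widetilde{D_{\S^n}}\otimes(-i\gamma^1/(1+s)^2)$, and then apply the min-max principle by comparison with the external sum $\widetilde{D_{\S^n}}^2\otimes 1 + 1\otimes T^2$.

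The one noteworthy difference is in how the cross term is controlled. The paper evaluates $\langle\phi_k\otimes\psi_l, D_\times^2(\phi_k\otimes\psi_l)\rangle$ on product eigenvectors and observes the cross term vanishes there (since $\langle\phi_k,\gamma_3\widetilde{D_{\S^n}}\phi_k\rangle=0$), then estimates $\mu_{n^2+1}$ using these test vectors. Your Peter--Paul bound instead produces a genuine operator-form inequality $D_\times^2 \geq c(\widetilde{D_{\S^n}}^2\otimes 1 + 1\otimes T^2) - C$ valid on the whole form domain, so the min-max comparison is immediate and you avoid any worry about off-diagonal contributions when passing from product eigenvectors to general $\Psi$. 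This is a slightly cleaner and more robust route to the same conclusion.
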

		\begin{proof}
			We will show that $\mu_n(D_\times) \to \infty$ and invoke Proposition \ref{prop:min_max_principle}.
			
			Recall that for odd-dimensional spheres we have $D_\times = \widetilde{D^+_{ \S^{2k-1}}} \otimes \frac{1}{1+s} + \gamma_3 \otimes T$, while for even-dimensional spheres we have $D_\times = D_{ \S^{2k}} \otimes \frac{1}{1+s} + \gamma_r \otimes T$.
			In both cases we have the same structure that can be characterized as $D_\times = D \otimes \frac{1}{1+s} + \Gamma \otimes T$.
			It is this structure that enables the following proof, for simplicity we will prove it in the odd case. %ease of notation we will use the odd case as example.
			
			%Important is
                        First of all, note that the square of $D_\times$ is 
			\begin{equation*}
				D_\times^2 = (\widetilde{D^+_{ \S^{2k-1}}})^2 + \gamma_3 \widetilde{D^+_{ \S^{2k-1}}} \otimes -i \gamma_1 \frac{1}{(1+s)^2} + 1 \otimes T^2.
			\end{equation*}
			Since $T$ has compact resolvent, so does $T^2$ which means that by Proposition \ref{prop:compact_resolvent_gives_eigenvectors} there is a complete orthonormal basis of eigenvectors $ \{ \psi_n \}_{n \in \N} \subset L^2((-\varepsilon, \varepsilon), \C^2)$ for $T^2$.
			These eigenvectors can easily be adapted to eigenvectors for $T$ with eigenvalues $\lambda_n$ such that $\lambda_n^2$ is an increasing sequence tending to infinity.
			Similarly we get a complete orthonormal basis of eigenvectors $\{\phi_n\}_{n \in \N} \subset L^2( \S^{2k-1}, \ \cS^+ \otimes \C^2)$ for $\widetilde{D^+_{ \S^{2k-1}}}$ with eigenvalues $\nu_n$ such that $\nu_n^2$ is increasing and unbounded.
			
			The set $\{ \phi_k \otimes \psi_l \}_{(k, l) \in \N \times \N}$ is a complete orthonormal set for $L^2( \S^{2k-1}, \C^2) \otimes L^2((-\varepsilon, \varepsilon), \C^2)$, using this set we will show that $\mu_n(D_\times^2) \to \infty$.
			It is clear from the definition that the $\mu_n(D^2_\times)$ form an increasing sequence in $n$, so it is sufficient to show that $\mu_{n^2 + 1}(D_\times^2) \to \infty$.
			
			Fix $n \in \N$, we will compute a lower bound for $U_{D_\times^2}( \{ \phi_k \otimes \psi_l \}_{1 \leq k, l \leq n})$, which in turns gives a lower bound for $\mu_{n^2+1}(D_\times^2)$.
			Since $\{ \phi_k \otimes \psi_l \}_{(k, l) \in \N \times \N}$ is a complete set any element of $\dom(D_\times^2)$ is a limit of a sequence of finite linear combinations of the $\phi_k \otimes \psi_l$.
			This leads us to consider
			\begin{align*}
				\langle \phi_k \otimes \psi_l, D_\times^2 (\phi_k \otimes \psi_l) \rangle 
				= & \langle \phi_k \otimes \psi_l, (1 \otimes T^2)\phi_k \otimes \psi_l \rangle  \\ & + \langle \phi_k \otimes \psi_l, \left(\left(\widetilde{D^+_{ \S^{2k-1}}} \otimes \frac{1}{(1+s)^2}^2\right) \right) \phi_k \otimes \psi_l \rangle \\
				  & + \langle \phi_k \otimes \psi_l, \left(\gamma_3 \widetilde{D^+_{ \S^{2k-1}}} \otimes  -i\gamma_1 \frac{1}{(1 + s)^2} \right)\phi_k \otimes \psi_l \rangle, \\
				= & \lambda_l^2 + \nu_k^2 \left\| \frac{1}{1+s} \psi_l \right\|^2 - i\langle \phi_k, \gamma_3 \widetilde{D^+_{ \S^{2k-1}}} \phi_k \rangle\langle \psi_l, \gamma_1 \frac{1}{(1 + s)^2} \psi_l \rangle, \\
				= & \lambda_l^2 + \nu_k^2 \left\| \frac{1}{1+s} \psi_l \right\|^2.
			\end{align*}
			The cross-term vanishes because $\gamma_3$ and $\widetilde{D^+_{ \S^{2k-1}}}$ anti-commute.
			Since $\frac{1}{1 + s}$ is bounded below by $\frac{1}{1 + \varepsilon}$ and above by $\frac{1}{1-\varepsilon}$ on $(-\varepsilon, \varepsilon)$ we find that
			\begin{align*}
				\langle \phi_k \otimes \psi_l, D_\times^2 \phi_k \otimes \psi_l \rangle \geq \frac{1}{(1 + \varepsilon)^2} \nu_k^2 + \lambda_l^2.
			\end{align*}
	
			Every element $\Psi$ of $\dom(D_\times^2)$ can be written
			\begin{align*}
				\Psi = \sum_{k,l=0}^{\infty} \alpha_{(k, l)} \phi_k \otimes \psi_l
			\end{align*}
			since the $\{\phi_k \otimes \psi_l\}$ form a complete orthonormal set.
			
			If $\Psi$ is an admissible element in the infimum of $U_{D_\times^2}(\{\phi_k \otimes \psi_l\}_{1 \leq k, l \leq n})$, then $\alpha_{(k, l)} = 0$ for $k, l \leq n$ and $\sum |\alpha_{(k, l)}|^2 = 1$, which means
			\begin{align*}
				\langle \Psi, D_\times^2 \Psi \rangle \geq \frac{1}{(1+\varepsilon)^2}\nu_{n+1}^2 + \lambda_{n+1}^2.
			\end{align*}
			The right hand side of this equation clearly tends to infinity as $n$ tends to infinity, so $\mu_{n^2+1}(D_\times^2)$ tends to $\infty$ as desired.
		\end{proof}

	\subsection{Relation to the Kasparov product of \texorpdfstring{$\imath_!$}{i!} and \texorpdfstring{$[\R^{n+1}]$}{R n+1}}
	\label{sec:kasparov_product}
		
		Now that we have established the analytical properties of $D_\times$ it is time to turn to our primary goal and establish the unbounded factorization of $[\S^n]$ as the product of the unbounded shriek cycle and Euclidean space. This also provides, in a sense, a factorization of $D_{ \S^n}$ as a product of $S$ and $D_{\R^{n+1}}$, although we are left with the explicit remainder $T$, that becomes trivial in bounded $KK$-theory.
		
		\begin{theorem}
			Let $n \geq 1$. Then
			\begin{description}
				\item[$n$ odd]
				The data $(\tilde{\E} \otimes_{C_0(\R^{n+1})} L^2(\R^{n+1}, \cS), D_\times; \gamma^3 \otimes \gamma^3)$ defines an unbounded Kasparov $C( \S^n) \otimes \Cl_1$-$\C$ cycle that represents both $\imath_! \otimes [\R^{n+1}]$ and $\widetilde{[ \S^n]} \otimes \1$ in $KK_0(C( \S^n) \otimes \Cl_1, \C)$.
	
				\item[$n$ even]
				The data $(\tilde{\E} \otimes_{C_0(\R^{n+1}) \otimes \Cl_1} L^2(\R^{n+1}, \cS), D_\times; \gamma^3 \otimes \gamma_r)$ defines an unbounded Kasparov $C( \S^n)$-$\C$ cycle that represents both $\imath_! \otimes \widetilde{[\R^{n+1}]}$ and $[ \S^n] \otimes \1$ in $KK_0(C( \S^n), \C)$.
			\end{description}
		\end{theorem}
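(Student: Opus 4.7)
The plan is to prove both asserted factorizations via Kucerovsky's criterion, treating the odd and even cases in parallel since after Proposition \ref{lem:product_space_of_E_R2} they share the same structural form. I will implicitly write $\widetilde{[\S^n]}$ (doubled) in the odd case and $[\S^n]$ in the even case, and analogously for $[\R^{n+1}]$.

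First I would verify that the data indeed defines an unbounded Kasparov cycle. Self-adjointness of $D_\times$ on the algebraic tensor product domain is the content of Corollary \ref{cor:D_product_self_adjoint}, compactness of the resolvent is Proposition \ref{prop:D_product_compact_resolvent}, and the $\Z_2$-grading $\gamma^3 \otimes \gamma^3$ (respectively $\gamma^3 \otimes \gamma_r$) anti-commutes with $D_\times$ by construction of the doublings. Boundedness of $[D_\times, a \otimes 1]$ for $a$ in a dense subalgebra of $C(\S^n)$ (I would take $a \in C^\infty(\S^n)$) is easiest to see in the unitarily equivalent picture of Proposition \ref{lem:product_space_of_E_R2}, where this commutator becomes $[\widetilde{D_{\S^n}}, a] \otimes \frac{1}{1+s}$ and is manifestly bounded since $\frac{1}{1+s}$ is bounded on $(-\varepsilon, \varepsilon)$.

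Second, to obtain the factorization $\imath_! \otimes [\R^{n+1}]$, I would apply Kucerovsky's criterion directly. Working on the dense domain of elementary tensors $e \otimes \xi$ with $e$ smooth and compactly supported in $\S^n \times (-\varepsilon, \varepsilon)$ and $\xi$ in a core for $D_{\R^{n+1}}$, the three Kucerovsky requirements are: the domain inclusion $\dom(D_\times) \subset \dom(1 \otimes_\nabla D_{\R^{n+1}})$, which holds by construction; boundedness of the graded commutators $[\gamma^3 \otimes_\nabla D_{\R^{n+1}}, T_e]$ for $e$ in a dense subspace, where $T_e(\xi) = e \otimes \xi$, which follows from Lemma \ref{lem:connection} since $\nabla^\E(e)$ is precisely the expression evaluating this commutator; and the local positivity condition, which asks that the graded anticommutator of $\tilde S \otimes 1$ with $\gamma^3 \otimes_\nabla D_{\R^{n+1}}$ be bounded below on the dense domain. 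The key input for the last point is the radial correction $-\frac{n}{2(s+1)}$ in $\nabla^\E$, whose designed cancellation with the $\frac{n}{2r}$ term in $D_{\R^{n+1}}$ (crucial already in the proof of Proposition \ref{lem:product_space_of_E_R2}) leaves $\tilde S$ as the off-diagonal mass term modulo bounded curvature-type remainders, yielding the required positivity.

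Third, to obtain the factorization $\widetilde{[\S^n]} \otimes \1$, I would apply Kucerovsky's criterion to the decomposition $U D_\times U^* = \widetilde{D_{\S^n}} \otimes \frac{1}{1+s} + \Gamma \otimes T$ from Proposition \ref{lem:product_space_of_E_R2}, with $\Gamma = \gamma^3$ or $\gamma_r$ according to parity. Here both factors of the Kasparov product are $\C$-$\C$ cycles, so the balanced tensor product is the ordinary Hilbert space tensor product and the connection is trivial. The three conditions simplify considerably because $\frac{1}{1+s}$ is a bounded positive self-adjoint scalar multiplier commuting with $T$ and with $\Gamma$; the graded anticommutator of $\widetilde{D_{\S^n}} \otimes \frac{1}{1+s}$ with $1 \otimes T$ vanishes, positivity reduces to the positivity of $T^2 \otimes 1$ and $(\widetilde{D_{\S^n}})^2 \otimes (1+s)^{-2}$, and boundedness of commutators is immediate. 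Equivalently, the linear homotopy $t \mapsto \widetilde{D_{\S^n}} \otimes \big((1-t)\tfrac{1}{1+s} + t\big) + \Gamma \otimes T$ deforms $D_\times$ through unbounded Kasparov cycles to the pure external product $\widetilde{D_{\S^n}} \times T$, which by the standard external product formula represents $\widetilde{[\S^n]} \otimes \1$.

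The main obstacle is the positivity condition in the second step. Controlling the graded anticommutator of $\tilde S \otimes 1$ with $\gamma^3 \otimes_\nabla D_{\R^{n+1}}$ on the dense domain requires carefully identifying smooth elementary tensors in the interior balanced tensor product, tracking the Clifford signs through the connection, and verifying that after the designed $\frac{n}{2(s+1)}$-versus-$\frac{n}{2r}$ cancellation the remaining terms are bounded below rather than merely bounded. This is the point where the specific form of $f(s) = \alpha\tan(\alpha s)$, the factor $\frac{1}{r^n}$ in $\langle \cdot, \cdot \rangle_\E$, and the chosen connection $\nabla^\E$ all need to be used coherently.
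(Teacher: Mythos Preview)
Your overall plan—verify the cycle axioms via Corollary~\ref{cor:D_product_self_adjoint} and Proposition~\ref{prop:D_product_compact_resolvent}, then apply Kucerovsky's criterion twice using the two available expressions for $D_\times$—is exactly the paper's approach, and your treatment of the connection condition via the metric property of $\nabla^\E$ matches the paper. However, your justifications for the positivity conditions contain concrete errors. For $\imath_! \otimes [\R^{n+1}]$, Kucerovsky's domain condition is $\dom(D_\times) \subset \dom(\tilde S \otimes 1)$, not the inclusion into $\dom(1 \otimes_\nabla D_{\R^{n+1}})$ you wrote. More importantly, the $\frac{n}{2r}$ cancellation you invoke is irrelevant to positivity: that term is bounded on the tubular neighbourhood, and its cancellation is only what produces the clean split form in Proposition~\ref{lem:product_space_of_E_R2}. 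The actual computation gives $\{\tilde S \otimes 1, D_\times\} = 2f(s)^2 + f'(s) \otimes \gamma^1 \otimes \gamma^1$, and the lower bound by $-\alpha^2$ comes from the differential equation $f(s)^2 - f'(s) = -\alpha^2$ satisfied by $f(s) = \alpha\tan(\alpha s)$, not from any curvature-type remainder.

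For $\widetilde{[\S^n]} \otimes \1$, your assertion that $\frac{1}{1+s}$ commutes with $T$ is false: $T$ contains $i\gamma^1\partial_s$, and $[T,\frac{1}{1+s}] = -i\gamma^1(1+s)^{-2}$ is precisely the cross term in $D_\times^2$ that appears in the proof of Proposition~\ref{prop:D_product_compact_resolvent}. Hence the anticommutator of the two summands of $D_\times$ does \emph{not} vanish. What actually makes Kucerovsky's positivity work is that the relevant quantity is $\{\widetilde{D_{\S^n}} \otimes 1, D_\times\}$: here the cross piece $\{\widetilde{D_{\S^n}} \otimes 1, \Gamma \otimes T\}$ vanishes because $\widetilde{D_{\S^n}}$ anticommutes with the grading $\Gamma$ (independently of any commutation between $\frac{1}{1+s}$ and $T$), leaving $2(\widetilde{D_{\S^n}})^2 \otimes \frac{1}{1+s} \geq 0$. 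Your alternative operator-homotopy argument is viable, but verifying self-adjointness along the path again runs through Proposition~\ref{prop:product_sum_is_esa} and hence through the nontrivial commutator $[\frac{1}{1+s}, T^2]$, so the simplification you claim is not available there either.
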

		
		\begin{proof}
			Again we will do the proof in the case $n$ odd, however the same strategy works for the case where $n$ is even.
		
			We have proven that $D_\times$ is self-adjoint and has compact resolvent in Section \ref{sec:product_operator_analysis}.
			Moreover, the commutators of $D_\times$ with $C^1( \S^{2k-1}) \otimes \Cl_1$ are bounded so $(\tilde{\E} \otimes_{C_0(\R^{2k})} L^2(\R^{2k}, \cS), D_\times; \gamma_3 \otimes \gamma_3)$ is an unbounded Kasparov cycle.
			
			The remainder of the proof deals with verifying Kucerovsky's criterion \cite[Theorem 13]{Kucerovsky} in both cases.
			In the case $\imath_! \otimes [\R^{2k}]$ we will use the expression in Equation \ref{eq:product_operator_definition}, and in the $\widetilde{[ \S^{2k-1}]}\otimes \1$ case we use the expression in Equation \ref{eq:product_operator_radial_split}.

%			Let us first consider the connection condition in both cases.

Let us first consider Kucerovsky's connection condition for the product $\imath_! \otimes_{C_0(\R^{2k})} [\R^{2k}]$ where a general computation using the properties of a metric connection suffices. Indeed, let $\xi \in \tilde{\E}$ be a homogeneous element of degree $\deg \xi$, and define $T_\xi:L^2(\R^{2k}, \cS) \to \tilde{\E} \otimes_{C_0(\R^{2k})} L^2(\R^{2k}, \cS)$ by $\psi \mapsto \xi \otimes \psi$.
			The adjoint is given by $\phi \otimes \psi \mapsto \langle \xi, \phi \rangle_{\tilde{\E}} \cdot \psi$ for an elementary tensor $\phi \otimes \psi \in \tilde{\E} \otimes_{C_0(\R^{2k})} L^2(\R^{2k}, \cS)$.
			The connection condition for the product $\imath_! \otimes [\R^{2k}]$ is, in this case, that the graded commutator
			\begin{equation*}
				\left[ \matTwo{D_\times}{0}{0}{D_{\R^{2k}}}, \matTwo{0}{T_\xi}{T_\xi^*}{0} \right]
			\end{equation*}
			is bounded for $\xi$ in a dense subset of $\E$.
			
			A simple calculation shows that this is equivalent to boundedness of
			\begin{equation*}
				\left[ \matTwo{\gamma^3 \otimes_{\nabla^{\tilde{\E}}} D_{\R^{2k}}}{0}{0}{D_{\R^{2k}}}, \matTwo{0}{T_\xi}{T_\xi^*}{0} \right]
			\end{equation*}
			
			Evaluating the bottom-left component of the resulting matrix on the elementary tensor $\phi \otimes \psi \in \tilde{\E} \otimes_{C_0(\R^{2k})} L^2(\R^{2k}, \cS)$ yields
			\begin{align*}
				D_{\R^{2k}} T_\xi^* - (-1)^{\deg \xi} & T_\xi^* (\gamma^3 \otimes_{\nabla^{\tilde{\E}}} D_{\R^{2k}}) = \\
				& = D_{\R^{2k}} (\langle \xi, \phi \rangle_{\tilde{\E}} \cdot \psi) - (-1)^{\deg \xi} T_\xi^*(\gamma^3 \phi \otimes D_{\R^{2k}} \psi + \nabla^{\tilde{\E}}(\gamma^3 \phi) \cdot \psi), \\
				& = D_{\R^{2k}} (\langle \xi, \phi \rangle_{\tilde{\E}} \cdot \psi) - \langle \xi, \phi \rangle_{\tilde{\E}} \cdot D_{\R^{2k}} \psi - \langle \xi, \nabla^{\tilde{\E}}(\phi) \rangle_{\tilde{\E}} \cdot \psi, \\
				& = [D_{\R^{2k}}, \langle \xi, \phi \rangle_{\tilde{\E}} ] \psi - \langle \xi, \nabla^{\tilde{\E}}(\phi) \rangle_{\tilde{\E}} \cdot \psi, \\
				& = \langle \nabla^{\tilde{\E}}(\xi), \phi \rangle_{\tilde{\E}} \cdot \psi.
			\end{align*}
			This is bounded by $||\nabla^{\tilde{\E}}(\xi)||$ which is indeed finite for a dense subset of $\tilde{\E}$.
			Here $\langle \nabla^{\tilde{\E}}(\xi), \phi \rangle_{\tilde{\E}}$ acts on $L^2(\R^{2k}, \cS)$ in the way described in Lemma \ref{lem:connection}.
			
			The top-right component is bounded by a similar computation, the diagonal components are 0.
			This computation is general for metric connections, in fact, whenever a product operator is constructed using a metric connection, the connection condition is automatically satisfied.

			The compatibility condition is straightforward, simply by taking the domain of compatibility to be $\W = C^\infty_c( \S^{2k-1} \times (-\varepsilon, \varepsilon))$ (embedded appropriately in the respective spaces).
			
We then consider the positivity condition. Using symmetry of $\tilde{S}$ and $D_\times$ we find that we need to prove that
			\begin{align*}
				\langle \psi, ((\tilde{S} \otimes 1)D_\times + D_\times(\tilde{S} \otimes 1))\psi \rangle \geq C \langle \psi, \psi \rangle,
			\end{align*}
			holds on $C_c^\infty( \S^{2k-1} \times (-\varepsilon, \varepsilon)) \otimes \C^2 \otimes \C^2$ for some $C \in \R$.
			Using the (anti)-commutation properties of the $\gamma$-matrices, we find that
			\begin{align*}
				\langle \psi, ((\tilde{S} \otimes 1)D + D(\tilde{S} \otimes 1))\psi \rangle
				& = \langle \psi, (2 f(s)^2 + f'(s) \otimes \gamma_1 \otimes \gamma_1) \psi \rangle, \\
				& = \langle \psi, f(s)^2 \psi \rangle + \langle \psi, (f(s)^2 + f'(s) \otimes \gamma_1 \otimes \gamma_1) \psi \rangle, \\
				& \geq \langle \psi, (f(s)^2 - f'(s)) \psi \rangle, \\
				& = - \alpha^2 \langle \psi, \psi \rangle,
			\end{align*}
			so we may choose $C = - \alpha^2$.

			Let us now turn to the product $\widetilde{[ \S^{2k-1}]}\otimes\1$. The connection condition requires a more explicit computation. To avoid notational confusion between the maps $T_\xi$ and the operator $T$ we write $D_2$ for $T$ in this computation, similar to the notation in \cite{Kucerovsky}.
			In this case the connection condition is that the commutator
			\begin{align*}
				\left[ \matTwo{D_\times}{0}{0}{D_2}, \matTwo{0}{T_\xi}{T_\xi^*}{0} \right] = \matTwo{0}{D_\times T_\xi - (-1)^{\deg \xi} T_\xi D_2}{D_2 T_\xi^* - (-1)^{\deg \xi} T_\xi^* D_\times}{0}
			\end{align*}
			is bounded for $\xi \in C^1( \S^{2k-1}, \C^2)$.
			
			As a first step, note that $T_\xi D_2 - (1 \otimes D_2)T_\xi = 0$ and that $D_2 T_\xi^* - T_\xi^* (1 \otimes D_2) = 0$.
			The grading-factors introduced by the commutator cancel against the $\gamma^3$ appearing in $D_\times$ so that the connection condition reduces to
			\begin{align*}
				\left[ \matTwo{\widetilde{D^+_{ \S^{2k-1}}} \otimes \frac{1}{1+s}}{0}{0}{0}, \matTwo{0}{T_\xi}{T_\xi^*}{0} \right] = \matTwo{0}{\left(\widetilde{D^+_{ \S^{2k-1}}} \otimes \frac{1}{1+s}\right) T_\xi}{- (-1)^{\deg \xi} T_\xi^* \left(\widetilde{D^+_{ \S^{2k-1}}} \otimes \frac{1}{1+s}\right)}{0}
			\end{align*}
			Using the self-adjointness of $\widetilde{D^+_{ \S^{2k-1}}}$ in the bottom-left, this equals
			\begin{equation}
				\frac{1}{1+s} \matTwo{0}{T_{\widetilde{D^+_{ \S^{2k-1}}} \xi}}{-(-1)^{\deg \xi} T^*_{\widetilde{D^+_{ \S^{2k-1}}}\xi}}{0}
			\end{equation}
			which is indeed bounded for $\xi \in C^1( \S^{2k-1}, \C^2)$. % so that the connection condition is also satisfied for the $\widetilde{[ \S^{2k-1}]} \otimes \1$ product.

The compatibility condition is again straightforward, while the positivity condition amounts to showing that 

%                   				For the product $\widetilde{[ \S^{2k-1}]} \otimes \1$ we must show that
			\begin{align*}
				\langle \phi \otimes \psi, ((\widetilde{D^+_{ \S^{2k-1}}}\otimes 1) D_\times + D_\times (\widetilde{D^+_{ \S^{2k-1}}} \otimes 1))(\phi \otimes \psi) \rangle \geq C \langle \phi \otimes \psi, \phi \otimes \psi \rangle
			\end{align*}
                        for some $C \in \R$. Since $\widetilde{D^+_{ \S^{2k-1}}} \otimes 1$ anti-commutes with $\gamma_3 \otimes T$ this term drops out, while $\widetilde{D^+_{ \S^{2k-1}}} \otimes 1$ commutes with $\widetilde{D^+_{ \S^{2k-1}}} \otimes \frac{1}{1+s}$ to give
			\begin{align*}
				\langle \phi \otimes \psi, ((\widetilde{D^+_{ \S^{2k-1}}}\otimes 1) D_\times + & D_\times (\widetilde{D^+_{ \S^{2k-1}}} \otimes 1))(\phi \otimes \psi) \rangle \\ 
				& = 2\langle \phi \otimes \psi, \left(\widetilde{D^+_{ \S^{2k-1}}}\right)^2 \phi \otimes \frac{1}{1+s} \psi \rangle, \\
				& \geq \frac{2}{1-\varepsilon} ||\widetilde{D^+_{ \S^{2k-1}}} \phi||^2 ||\psi||^2 \geq 0. \qedhere
			\end{align*}
			%% So in both cases the positivity condition is satisfied.
%We conclude that the spectral triple $(C( \S^{2k-1}), \tilde{\E} \otimes_{C_0(\R^{2k})} L^2(\R^{2k}, \cS), D_\times; \gamma_3 \otimes \gamma_3)$ is an unbounded representative for both $\tilde{\imath_!} \otimes [\R^{2k}]$ and $\widetilde{[ \S^{2k-1}]} \otimes \1$ in $KK$-theory.
		\end{proof}

\appendix
\section{Unbounded KK-cycles: from odd to even}
\label{sect:app}
		At several points in this paper we need to distinguish between the case $n$ even and $n$ odd.
		The fundamental class of a manifold $M$ with $\dim(M)$ even will yield an even unbounded Kasparov cycle, while if $\dim(M)$ is odd we get an odd unbounded Kasparov cycle.
		However, we want to work with even cycles exclusively, since that is where Kucerovsky's criterion is applicable.
We accomplish this by using the isomorphisms
                $KK_0(A \otimes \Cl_1, B) \cong KK_1(A, B) \cong KK_0(A, B \otimes \Cl_1)$, which at the level of concrete cycles are given by the following lemma.
		
		\begin{lemma}
			Let $(\E, D)$ be an odd unbounded Kasparov $A$-$B$ cycle.
			Then
			\begin{enumerate}
				\item $(\E \otimes \C^2, D \otimes \gamma^2; 1 \otimes \gamma^3)$ is an even unbounded Kasparov $A\otimes \Cl_1$-$B$ cycle, with $\Cl_1$ acting by $1 \otimes \gamma^1$.
				We call this the left-doubling of $(\E, D)$.
				\item $(\E \otimes \C^2, D \otimes \gamma^1; 1 \otimes \gamma^3)$ is an even unbounded Kasparov $A$-$B \otimes \Cl_1$ cycle with $\Cl_1$ acting by $1 \otimes \gamma^1$.
				We call this the right-doubling of $(\E, D)$.
			\end{enumerate}
			
			Conversely, any even $A \otimes \Cl_1$-$B$ cycle is equivalent to the left-doubling of an odd $A$-$B$ cycle in $KK_0(A \otimes \Cl_1, B)$ and any $A$-$B \otimes \Cl_1$ cycle is the right-doubling of the positive eigenspace of the non-trivial generator of $\Cl_1$.
			\label{lem:doubling_cycles}
		\end{lemma}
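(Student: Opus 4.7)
The plan is to verify the axioms of an even unbounded Kasparov cycle from those of the odd cycle $(\E, D)$ using only the Pauli-type relations among the $\gamma^i$, and then to recover an odd cycle from a given even one by diagonalising the Clifford generator.

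I would treat the two forward statements in parallel, since they differ only by swapping the roles of $\gamma^1$ and $\gamma^2$. Write $\tilde D = D \otimes \gamma^2$ (for the left-doubling) or $D \otimes \gamma^1$ (for the right-doubling), $\Gamma = 1 \otimes \gamma^3$ and $c = 1 \otimes \gamma^1$. Since each $\gamma^i$ is a self-adjoint unitary, $\tilde D$ is self-adjoint and regular on $\dom(D) \algotimes \C^2$, and $\tilde D^2 = D^2 \otimes 1$ has compact resolvent iff $D$ does, by $B$-valued functional calculus on the $2\times 2$ amplification. The relations $\gamma^i\gamma^j = -\gamma^j\gamma^i$ for $i \neq j$ yield $\Gamma \tilde D + \tilde D \Gamma = 0$, $\Gamma c + c\Gamma = 0$, $c^2 = 1$, and $\tilde D c + c \tilde D = 0$, which is exactly the compatibility needed between the grading, the Clifford action and the Dirac operator to obtain an even cycle with a compatible $\Cl_1$-action. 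The relevant algebra acts as $a \otimes 1$, so $[\tilde D, a \otimes 1] = [D, a] \otimes \gamma^i$ is bounded on the same dense subalgebra as for $D$.

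For the converse, I would start with an even cycle $(\F, D'; \gamma)$ for $A \otimes \Cl_1$-$B$ in which the Clifford generator acts as a self-adjoint unitary $c$ commuting with $A$ and $B$ but anti-commuting with $\gamma$ and $D'$. The projections $\tfrac{1}{2}(1 \pm c)$ are $B$-linear adjointable projections, so they split $\F = \F^+ \oplus \F^-$ as a direct sum of complemented Hilbert $A$-$B$ submodules. Because $\gamma$ is a self-adjoint unitary that anti-commutes with $c$, it exchanges $\F^+$ and $\F^-$, providing a canonical unitary identification $\F \cong \F^+ \otimes \C^2$ under which $c \mapsto 1 \otimes \gamma^1$ and $\gamma \mapsto 1 \otimes \gamma^3$. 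Relative to this identification, any odd self-adjoint regular operator anti-commuting with $c$ is forced to have the form $D_0 \otimes \gamma^2$ for a uniquely determined self-adjoint regular operator $D_0$ on $\F^+$; applying this to $D'$ exhibits $(\F, D'; \gamma)$ as unitarily equivalent to the left-doubling of $(\F^+, D_0)$. The $A$-$B \otimes \Cl_1$ case is handled identically, replacing $D_0 \otimes \gamma^2$ by $D_0 \otimes \gamma^1$.

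The only genuine analytic point is that the reduced operator $D_0$ inherits regularity and compact resolvent from $D'$: this follows immediately from $(D')^2 = D_0^2 \otimes 1$, so that compactness of $(1 + (D')^2)^{-1}$ on $\F$ transfers to $(1 + D_0^2)^{-1}$ on $\F^+$, and similarly for self-adjointness via the surjectivity of $D_0 \pm i$ deduced from that of $D' \pm i$. Beyond that, the argument is bookkeeping with the Clifford relations, and I do not anticipate a substantial obstacle.
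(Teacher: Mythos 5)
Your forward direction is essentially the paper's (which treats it as routine bookkeeping), though note one sign slip: for the right-doubling, $D\otimes\gamma^1$ \emph{commutes} with the Clifford action $1\otimes\gamma^1$ rather than anti-commuting with it --- which is in fact the correct requirement, since an adjointable operator must be right-linear over $B\otimes\Cl_1$; the two cases are not literally obtained from one another by swapping $\gamma^1$ and $\gamma^2$.

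The converse for $A\otimes\Cl_1$-$B$ cycles, however, contains a genuine gap, and it is exactly the point the paper flags as the only nontrivial content of the lemma. You assume that the operator $D'$ of a general even $A\otimes\Cl_1$-$B$ cycle anti-commutes with the self-adjoint unitary $c$ implementing the odd generator of $\Cl_1$. The definition of a Kasparov module only requires the graded commutator $D'c+cD'$ to be \emph{bounded} (since $c$ lies in the represented algebra $A\otimes\Cl_1$), not to vanish; vanishing holds for doubled cycles but not in general. Without it, your decomposition $\F\cong\F^+\otimes\C^2$ via the $\pm1$ eigenspaces of $c$ still works at the level of modules and gradings, but $D'$ need not take the block form $D_0\otimes\gamma^2$, so no odd cycle $(\F^+,D_0)$ is produced. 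This is why the lemma asserts only equivalence in $KK_0(A\otimes\Cl_1,B)$ rather than the unitary equivalence you claim: the paper's proof invokes van den Dungen [Thm.\ 5.1], which perturbs $D'$ within its $KK$-class so that it genuinely anti-commutes with $c$, after which the halving proceeds as you describe. Supplying that modification (or an equivalent homotopy argument) is the missing step; the rest of your argument, including the transfer of self-adjointness, regularity and compact resolvent to $D_0$, is fine.
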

		\begin{proof}
			The only interesting claim in this Lemma is that every even $A \otimes \Cl_1$-$B$ corresponds to an odd $A$-$B$ cycle, since this requires the equivalence relations of $KK$-theory.
			The difficulty in this ``halving'' procedure is that the operator might not anti-commute with the action of $\Cl_1$ as in the case of a doubled odd cycle.
			In \cite[Thm. 5.1]{Dungen} van den Dungen shows that the operator can be modified such that it does, without changing the represented $KK$-class.
		\end{proof}


\begin{thebibliography}{99}
	
	\bibitem{Baer}
	C B\"ar. ``Metric with Harmonic Spinors''. In: \textit{Geometric and Functional Analysis} 6 (6 1996), pp. 899-942.
	
	\bibitem{BaajJulg} 	
	S Baaj and P Julg. ``Bivariant Kasparov Theory and Unbounded Operators on Hilbert $C^*$-modules''. In: \textit{Comptes rendus de l' academie des sciences serie I-mathematique} 296.21 (1983), pp. 875-878.
	
	\bibitem{Bures}
	J Bure\v{s}. ``Dirac operators on hypersurfaces''. In: \textit{Commentationes Mathematicae Universitaties Carolinae} 34 (2 1993), pp. 313-322.

\bibitem{C82}
A.~Connes.
\newblock A survey of foliations and operator algebras.
\newblock In {\em Operator algebras and applications, {P}art {I} ({K}ingston,
  {O}nt., 1980)}, volume~38 of {\em Proc. Sympos. Pure Math.}, pages 521--628.
  Amer. Math. Soc., Providence, R.I., 1982.
        
\bibitem{Connes}
A.~Connes.
\newblock {\em {Noncommutative Geometry}}.
\newblock Academic Press, San Diego, 1994.
 
        
	\bibitem{ConnesSkandalis}
	A Connes and G Skandalis. ``The longitudinal index theorem for foliations''. In: \textit{Publications of the Reasearch Institute for Mathematical Sciences} 20 (6 1984), pp. 1139-1183
	
	\bibitem{Dungen}
	K van den Dungen. ``Locally bounded Perturbations and (odd) Unbounded $KK$-theory''. In: \textit{Journal of Noncommutative Geometry} (2016). To appear, available online.
	
	\bibitem{Echterhoff}
	S. Echterhoff. ``Bivariant $KK$-theory and the Baum-Connes conjecture''. In: \textit{$K$-theory for group $C^*$-Algebras and Semigroup $C^*$-Algebras}. Vol. 47. Oberwolfach Seminars. Cham: Birkh\"auser, 2017, pp. 81-147.
	
	\bibitem{HigsonRoe}
	N Higson and J Roe. \textit{Analytic $K$-homology}. Oxford: Oxford University Press, 2000.
	
	\bibitem{KasparovDual}
	GG Kasparov, ``The operator $K$-functor and extensions of $C^*$-algebras''. In: \textit{Mathematics of the USSR-Izvestiya} 16.3 (1981), p. 513.
	
	\bibitem{KaadLeschUnbddProd}
	J Kaad and M Lesch. ``Spectral flow and the unbounded Kasparov product''. In: \textit{Advances in Mathematics} 248 (2013), pp. 495-530.
	
	\bibitem{KaadSuijlekom}
	J Kaad and WD van Suijlekom. ``Riemannian Submersions and Factorization of Dirac Operators''. In: \textit{Journal of Noncommutative Geometry} 12 (3 2016), pp. 1133-1159
	
	\bibitem{Kucerovsky}
	D Kucerovsky. ``The $KK$-Product of Unbounded Modules''. In: \textit{$K$-Theory} 11 (1 1996), pp. 17-34
	
	\bibitem{Lax}
	PD Lax. \textit{Functional Analysis}. New York, NY: Wiley-Interscience, 2002.
	
	\bibitem{Mesland}
	B Mesland. ``Unbounded Bivariant $K$-theory and Correspondences in Noncommutative Geometry''. In: \textit{Journal f\"ur die reine und angewandte Mathematik (Crelles Journal)} 2014 (691 2014), pp. 101-172.
	
	\bibitem{MeslandRennie2016}
	B Mesland and A Rennie. ``Nonunital spectral triples and metric completeness in unbounded $KK$-theory''. In \textit{Journal of Functional Analysis} 271 (9 2016), pp. 2460-2538.
	
	\bibitem{Pedersen}
	GK Pedersen. \textit{Analysis Now.} New York, NY: Springer-Verlag, 1989.
	
	\bibitem{ReedSimon}
	M Reed and B Simon. \textit{Methods of Modern Mathematical Physics,} IV. New York, NY: Academic Press, Inc, 1980.
	
\end{thebibliography}
\end{document}